\DeclareMathOperator{\A}{A}
\DeclareMathOperator{\D}{D}
\DeclareMathOperator{\F}{F}
\DeclareMathOperator{\R}{R}
\DeclareMathOperator{\dom}{dom}
\DeclareMathOperator{\imm}{im}
\DeclareMathOperator{\pf}{\mathsf{Filt}_\mathrm {max}}
\newcommand \compo {\mathbin{;}}
\newcommand \pref {\mathbin{\sqcup}}
\newcommand \bmeet {\cdot}
\newcommand \bjoin {+}
\newcommand \rest {\mathbin{\vartriangleright}}
\newcommand \aand \wedge
\newcommand{\ppreceq}{\mathbin{\preceq\hspace{-1mm}\preceq}}
\newcommand{\meet}{\textstyle\prod}
\newcommand{\join}{\textstyle\sum}
\newcommand{\from}{\colon}
\newcommand{\algebra}[1]{\mathfrak{#1}}
\newcommand{\defn}[1]{\textbf{#1}}
\newcommand\mynobreakpar{\par\nobreak\@afterheading}
\theoremstyle{plain}
\newtheorem{theorem}{Theorem}[section]
\newtheorem{proposition}[theorem]{Proposition}
\newtheorem{corollary}[theorem]{Corollary}
\newtheorem{lemma}[theorem]{Lemma}
\theoremstyle{definition}
\newtheorem{definition}[theorem]{Definition}
\theoremstyle{remark}
\newtheorem{remark}[theorem]{Remark}
\newtheorem{problem}[theorem]{Problem}
\newtheorem{example}[theorem]{Example}
\numberwithin{equation}{section}
\newcommand{\parrow}{\rightharpoonup}
\newcommand{\typeface}{\mathbf}
\newcommand{\hauset}{\typeface{
    HausEt}}
\newcommand{\hausp}{\typeface{
    Haus}_{\mathrm p}}
\newcommand{\ba}{\typeface{BA}}
\newcommand{\ralg}{\typeface{DRA}}
\newcommand{\subalg}{\typeface{SubtrAlg}}
\newcommand{\fralg}{\typeface{C}_\mathrm{fin}\ralg}
\newcommand{\lczdet}{\typeface{Stone}{^+}\typeface{Et}}
\newcommand{\stone}{\typeface{Stone}}
\newcommand{\CC}{\mathbb C}
\newcommand{\cA}{\algebra A} \newcommand{\cB}{\algebra B}
\newcommand{\cC}{\algebra C} 
\newcommand{\cP}{\algebra P} 
 \newcommand{\cF}{\algebra F}
\newcommand{\just}[2]{\stackrel{#1}{#2}}
\newcommand*{\textrel}[1]{\mathrel{\textnormal{#1}}}
\newcommand{\ko}[1]{{\mathsf{KO}(#1)}}
\newcommand{\filt}[1]{{\mathsf{Filt}}(#1)}
\newcommand{\dbracket}[1]{\text{\textup{\textlbrackdbl}} #1
\text{\textup{\textrbrackdbl}}}
\begin{document}
\title{Difference--restriction algebras with operators}
\author{C\'elia Borlido}  \address{Universidade de Coimbra, CMUC,
Departamento de Matem\'atica, Coimbra,
Portugal}
\email{cborlido@mat.uc.pt}
\author{Ganna Kudryavtseva}
\address{University of Ljubljana, Faculty of Mathematics and Physics / Institute of Mathematics, Physics and
Mechanics, Ljubljana, Slovenia}
\email{ganna.kudryavtseva@fmf.uni-lj.si}
\author{Brett McLean}
\address{Department of Mathematics: Analysis, Logic and Discrete Mathematics, Ghent University, Ghent, Belgium}
\email{brett.mclean@ugent.be}




\newcommand{\red}{\color{red}}
\newcommand{\blue}{\color{blue}}

\keywords{Partial function, \'etale space, Stone duality, compatible completion, operators}

\begin{abstract}
We exhibit an adjunction between a category of abstract algebras of
partial functions that we call difference--restriction algebras and
a category of Hausdorff \'etale spaces. Difference--restriction
algebras are those algebras isomorphic to a collection of partial
functions closed under relative complement and domain restriction. 
Our adjunction generalises the
adjunction between the category of generalised Boolean algebras and the category
of Hausdorff spaces. We define the finitary compatible completion of
a difference--restriction algebra and show that the monad induced
by our adjunction yields the finitary compatible completion of any
difference--restriction algebra. As a corollary, the adjunction
restricts to a duality between the finitarily compatibly complete
difference--restriction algebras and the locally compact
zero-dimensional Hausdorff \'etale spaces, generalising the duality
between generalised Boolean algebras and locally compact
zero-dimensional Hausdorff spaces. We then extend these adjunction,
duality, and completion results to difference--restriction algebras
equipped with arbitrary additional compatibility preserving
operators.
\end{abstract}

\maketitle 



{}
\section{Introduction}
Functions are fundamental and ubiquitous throughout the formal sciences. \emph{Algebras of partial functions} do for functions what Boole \cite{Boole1854} did so successfully  for propositions. 
Functions---not necessarily with coincident domains, and thus `partial'---are treated as abstract elements that may be combined algebraically using various natural operations such as composition, domain restriction, override, or update. In this way, any collection of functions closed under the chosen operations forms an algebra. This allows the full power of abstract algebra to be brought to bear on the study of  functions.


In mathematics, algebras of partial functions appear for example as inverse semi\-groups~\cite{wagnergeneralised}, pseudo\-groups~\cite{LAWSON2013117}, and skew lattices~\cite{Leech19967}; in  computer science, in theories of finite state transducers \cite{10.1145/2984450.2984453} and   computable functions \cite{JACKSON2015259}; and in logic by giving semantics for deterministic propositional dynamic logics \cite{DBLP:journals/ijac/JacksonS11}, and for separation logic \cite{disjoint}. 
Many different selections of operations have been considered, each leading to a different class/category of algebras \cite{schein, garvacii71, Tro73, schein1992difference, 1018.20057, 1182.20058, DBLP:journals/ijac/JacksonS11, hirsch, BERENDSEN2010141, JACKSON2021106532}. (See \cite[\S 3.2]{phdthesis} for a guide to this literature.) 

In the algebraic study of propositions, a further seminal development occurred with the appearance of J\'onsson and Tarski's \emph{Boolean algebras with operators} (BAOs) \cite{1951, d9b8bf93-569b-32ea-ac64-293dc8ed9a8b}. The theory of Boolean algebras equipped with arbitrary additional operators has been very useful in a number of areas, most notably (the many forms of) modal logic \cite{goldblatt}. Much of the success of BAOs  can be attributed to them being dually equivalent to a category of relational structures, the \emph{descriptive general (Kripke) frames} \cite[Chapter 5]{blackburn_rijke_venema_2001}. As a result, BAOs solve the problem of finding semantics for normal modal logics (even the Kripke-incomplete ones), provide the machinery for constructing ultrafilter extensions of Kripke frames, and so on.




Returning to algebras of partial functions, although dualities/equivalences for some of the various categories of algebras have been and continue to be discovered \cite{lawson_2010, lawson2012non, kudryavtseva2017perspective, LAWSON201677, LAWSON2013117, 2009.07895, Bauer_2013, kudryavtseva2016boolean, COCKETT2021108030, KUDRYAVTSEVA2025110313}, no general framework of the sort provided by BAOs has previously existed. This is what we supply in the present work. 


In this paper, we work with the category of algebras of partial functions obtained from the signature consisting of the standard set-theoretic \emph{difference} operation and a \emph{domain restriction} operation. We refer to the algebras in this category as \emph{difference--restriction algebras}. In \cite{diff-rest1} and \cite{diff-rest2}, the first- and third-named authors began the development of a general and modular framework for partial function dualities by investigating difference--restriction algebras, arguing that they are an appropriate base class on which to build a theory analogous to BAOs. Since the union of two partial functions is not always a function, (and this determination cannot be made solely from the inclusion/extension ordering), it is valuable to be able to determine when two elements \emph{are} compatible in this sense. Not only are difference--restriction algebras very well behaved as ordered structures, acting in many ways like Boolean algebras, but domain restriction allows compatibility to be determined algebraically. 

In \cite{diff-rest1}, the authors proved that the difference--restriction algebras form a variety, giving a finite equational axiomatisation \cite[\begin{NoHyper}Theorem~5.7\end{NoHyper}]{diff-rest1}. In \cite{diff-rest2}, a `discrete' duality  was given between the category of `compatibly complete' atomic difference--restriction algebras and a category of set quotients, mirroring\footnote{In fact, it is a generalisation, as are all of our results.} the duality between the category of complete atomic Boolean algebras (CABAs) and the category of sets. Also in \cite{diff-rest2}, the result was extended to a duality `with operators' mirroring the duality between the category of CABAs with completely additive operators and the category of Kripke frames.

The main results of the present paper are the elaboration of an
adjunction between the category of difference--restriction algebras
and the category of Hausdorff \'etale spaces (\Cref{t:adj}) and the
extension of that theorem to algebras equipped with additional `compatibility preserving' operators
(\Cref{thm:expansion}). We also show that the monad induced by the
adjunction gives a form of finitary completion of algebras
(\Cref{p:completion}/\Cref{p:completion'}) and that the adjunction
restricts to a duality between the finitarily compatibly complete
algebras and the locally compact zero-dimensional Hausdorff \'etale
spaces (\Cref{t:discrete-duality}). \Cref{c:extended-duality} extends the duality of \Cref{t:discrete-duality}  with additional compatibility preserving operators and should be viewed as a partial function counterpart to the duality between BAOs and descriptive general frames.

\subsubsection*{Structure of paper} \Cref{preliminaries} contains preliminaries, including formal definitions of the class of difference--restriction algebras. We recall the axiomatisation of this class as presented in~\cite{diff-rest1}.

In \Cref{sec:duality}, we present and prove our central result: the adjunction between the category of difference--restriction algebras and the category of Hausdorff \'etale spaces (\Cref{t:adj}).

\Cref{sec:completion} concerns completion of difference--restriction algebras. We define the notions of
finitarily compatibly complete (\Cref{def:comp}) and of a finitary
compatible completion (\Cref{def:completion}), and we prove that
finitary compatible completions are unique up to isomorphism
(\Cref{p:1_0}). We prove that the monad induced by our adjunction
yields the finitary compatible completion of a difference--restriction
algebra (\Cref{p:completion}), and we conclude that the adjunction
restricts to a duality between the finitarily compatibly complete
difference--restriction algebras and the locally compact
zero-dimensional Hausdorff \'etale spaces (\Cref{t:discrete-duality}).

\Cref{sec:operators} concerns additional operations. We define the notion of a compatibility preserving operator (\Cref{def:compatibility-preserving}) and extend the adjunction (\Cref{thm:expansion}), completion (\Cref{p:completion'}), and duality (\Cref{c:extended-duality}) results of the previous two sections to difference--restriction algebras equipped with such operators.

In \Cref{sec:subtraction}, we explain how our duality is a generalisation of the duality between generalised Boolean algebras and locally compact zero-dimensional Hausdorff spaces, by describing how our result restricts to this special case (\Cref{c:1} and
\Cref{c:2}).


\section{Algebras of functions}\label{preliminaries}

Given an algebra $\algebra{A}$, when we write $a \in \algebra{A}$ or
say that $a$ is an element of $\algebra{A}$, we mean that $a$ is an
element of the domain of $\algebra{A}$. Similarly for the notation $S
\subseteq \algebra{A}$ or saying that $S$ is a subset of
$\algebra{A}$. We follow the convention that algebras are always
nonempty. If $S$ is a subset of the domain of a map $\theta$ then
$\theta[S]$ denotes the set $\{\theta(s) \mid s \in S\}$. We use
$\join$ and $\meet$ respectively as our default notations for joins
(suprema) and meets (infima). Thus $\bjoin$ and $\bmeet$ are our
default notations for \emph{binary} joins and meets.  Given an $n$-ary
operation $\Omega$ on $\cA$ and subsets $S_1, \dots, S_n \subseteq
\cA$, we shall use $\Omega(S_1, \dots, S_n)$ to denote the set
$\{\Omega(s_1, \dots, s_n) \mid s_1 \in S_1,\dots, s_n \in S_n\}$, and
if one of the sets is a singleton $\{a\}$, we may simply write~$a$.

We begin by making precise what is meant by partial functions and
algebras of partial functions. 
\begin{definition}
Let $X$ and $Y$ be sets. A \defn{partial function} from $X$ to $Y$
is a subset $f$ of $X \times Y$ validating
\begin{equation*}
(x, y) \in f \textrel{and} (x, z) \in f \implies y = z.
\end{equation*}
If $X = Y$ then $f$ is called simply a partial function on $X$. 
For a partial function $f \subseteq X \times Y$, if $(x,y)$ belongs to
$f$ then we may write $y = f(x)$. 
Given such a partial function, its \defn{domain} is the set
\[\dom(f) \coloneqq \{x \in X \mid \exists \ y \in Y \from (x, y) \in f\}.\]
For any binary relation $R \subseteq X \times Y$, we write $R^{-1}$
for the relation $\{(y, x) \mid (x, y) \in R\}$. Notice that a partial
function $f \subseteq X \times Y$ is injective if and only if $f^{-1}$
is also a partial function. If $R \subseteq X \times Y$ is a relation
and $P \subseteq X$ is a subset, then we write $R(P)$ for the subset
$\{y \in Y \mid \exists x \in P\colon (x, y) \in R\}$. Finally, for
any binary relations $R \subseteq X \times Y$ and $S \subseteq Y
\times Z$, we denote by $S \circ R$ (or simply $SR$) the composition
of $R$ and $S$:
\[S \circ R \coloneqq \{(x, z) \in X \times Z \mid \exists y \in Y \from (x,
y) \in R \textrel{and}(y, z) \in S\}.\]
When $R$ and $S$ are partial functions, this is their usual composition.
\end{definition}


\begin{definition}
An \defn{algebra of partial functions} of the signature $\{-,
\rest\}$ is a $\{-, \rest\}$-algebra whose elements are partial
functions from some (common) set $X$ to some (common) set $Y$ and
the interpretations of the symbols are given as follows.
\begin{itemize}

\item The binary operation $-$ is \defn{relative complement}:
\[f - g \coloneqq \{(x, y) \in X \times Y \mid (x, y) \in
f\textrel{and} (x, y) \not\in g\}.\]

\item The binary operation $\rest$ is \defn{domain
restriction}.\footnote{This operation has
historically been called \emph{restrictive multiplication},
where \emph{multiplication} is the historical term for
\emph{composition}. But we do not wish to emphasise this
operation as a form of composition.} It is the
restriction of the second argument to the domain of the first;
that is:
\[ f \rest g \coloneqq \{(x, y) \in X \times Y \mid x \in \dom(f)
\textrel{and} (x, y) \in g\}\text{.}\]
\end{itemize}
\end{definition}
Note that in algebras of partial functions of the signature $\{-,
\rest\}$, the set-theoretic intersection of two elements $f$ and $g$
can be expressed as $f - (f - g)$. We use the symbol~$\cdot$ for this
derived operation.

We also observe that, without loss of generality, we may assume $X =
Y$ (a common stipulation for algebras of partial functions). Indeed,
if $\cA$ is a $\{-, \rest\}$-algebra of partial functions from $X$ to
$Y$, then it is also a $\{-, \rest\}$-algebra of partial functions
from $X \cup Y$ to $X \cup Y$. In this case, this
non-uniquely-determined single set is called `the'
\defn{base}. However, certain properties may not be preserved by
changing the base. For instance, while a partial function is injective
as a function on~$X$ if and only if it is injective as a function
on~$X'$, this is not the case for surjectivity.

\begin{definition}
A \defn{difference--restriction algebra} is an algebra $\algebra A$ of the signature $\{ -, \rest\}$ that is isomorphic to
an algebra of partial functions. An isomorphism from $\algebra A$ to
an algebra of partial functions is a \defn{representation} of
$\algebra A$.
\end{definition}

Just as for algebras of partial functions, for any $\{-, \rest\}$-algebra
$\cA$, we will consider the derived operation $\bmeet$ defined by
\begin{equation*}
a \bmeet b \coloneqq a - (a - b).
\end{equation*}
In \cite{diff-rest1} it was shown that the
class of difference--restriction algebras is axiomatised by the following set of
equations.
\begin{enumerate}[label = (Ax.\arabic*), leftmargin = *]
\item \label{schein1} $a - (b - a) = a$
\item \label{commutative} 
$a \bmeet b = b \bmeet a$
\item \label{schein3} $(a - b) - c = (a - c) - b$
\item \label{eq:8} $(a \rest c)\bmeet(b \rest c) = (a \rest b) \rest
c$
\item \label{lifting}$ (a \bmeet b) \rest a = a \bmeet b$
\end{enumerate}

\begin{theorem}[{\cite[\begin{NoHyper}Theorem~5.7\end{NoHyper}]{diff-rest1}}]
The class of difference--restriction algebras is a variety,
axiomatised by the finite set of equations \ref{schein1} --
\ref{lifting}.
\end{theorem}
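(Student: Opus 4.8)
The plan is to recognise this as a representation theorem and to split it into a soundness and a completeness half. For soundness, I would verify directly that relative complement and domain restriction, interpreted on genuine partial functions, satisfy each of \ref{schein1}--\ref{lifting}; since equations are inherited by isomorphic copies and subalgebras, it suffices to check them in an arbitrary concrete algebra of partial functions, and each reduces to a short set-theoretic computation tracking domains and values. This shows every difference--restriction algebra is a model of \ref{schein1}--\ref{lifting}. The content of the theorem is the converse: every $\{-,\rest\}$-algebra $\cA$ satisfying \ref{schein1}--\ref{lifting} is representable. As equational classes are varieties, establishing that the models of \ref{schein1}--\ref{lifting} coincide with the representable algebras proves the statement.

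For the representation, I would first exploit \ref{schein1}--\ref{schein3}: these axioms ensure the $\{-,\bmeet\}$-reduct is a generalised Boolean algebra (the $\{-\}$-reduct being a subtraction algebra), for which I may invoke the standard prime-filter machinery, namely a supply of prime filters separating distinct elements, with $a \mapsto \{F : a \in F\}$ an embedding of generalised Boolean algebras. I would take the base set $X$ to be the set of these prime filters. Using $\rest$, I can record domain information abstractly: for a prime filter $F$, declare $a$ to be \emph{defined at} $F$ when $a \rest b \in F$ for some $b \in F$. To distinguish $\rest$ from $\bmeet$ I must also carry a value coordinate, so I would introduce, for each $F$, the relation $a \sim_F a'$ given by ``$a \bmeet a'$ is defined at $F$'' on the elements defined at $F$, and define the representation by $\theta(a) = \{(F, (F,[a]_{\sim_F})) : a \text{ defined at } F\}$, which is manifestly single-valued.

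It then remains to check that $\theta$ is an injective $\{-,\rest\}$-homomorphism. Injectivity follows from the separation property of prime filters: if $a \neq b$ then some prime filter contains $a - b$ or $b - a$, and at such a filter the two partial functions either disagree in their value coordinate or differ in being defined. Preservation of $-$ and $\rest$ reduces to showing that the index sets and value classes computed on each side agree; for instance $a \rest b$ and $b$ take the same value at every $F$ where both are defined, whereas $a \bmeet b$ shrinks the set of such $F$, which is exactly how $\rest$ and $\bmeet$ come apart. The main obstacle, and the place where \ref{eq:8} and \ref{lifting} are indispensable, is translating the purely equational restriction axioms into the filter-theoretic facts this requires: that ``defined at $F$'' is independent of the witness $b \in F$, that $\sim_F$ is an equivalence relation interacting correctly with $-$ and $\rest$, and that the value coordinate is genuinely coherent. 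Verifying these well-definedness and preservation claims from \ref{schein1}--\ref{lifting} alone is the delicate heart of the argument; once they are in place, the homomorphism and injectivity checks are routine.
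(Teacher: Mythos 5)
Your overall plan---soundness by direct verification in concrete algebras, plus a completeness half via a filter construction carrying a value coordinate---has the same general shape as the proof this paper imports from \cite{diff-rest1} (the theorem is only cited here, not reproved; the construction in \cite{diff-rest1} sends $a$ to the partial section $\widehat a$ of the quotient $\pf(\cA)\twoheadrightarrow\pf(\cA)/{\approx_\cA}$, which is essentially your ``filters with a value coordinate'' map read before quotienting). However, there is a genuine error at the foundation of your representation half: axioms \ref{schein1}--\ref{schein3} do \emph{not} make the $\{-,\bmeet\}$-reduct a generalised Boolean algebra. A subtraction algebra is only a meet-semilattice in which each principal downset $a^\downarrow$ is a Boolean algebra; it need not have binary joins at all. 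For instance, with $f=\{(0,0)\}$ and $g=\{(0,1)\}$, the set $\{\emptyset,f,g\}$ is closed under $-$ and $\rest$, but $f\vee g$ does not exist, so this is not a lattice, let alone a generalised Boolean algebra. (\Cref{sec:subtraction} of the paper makes exactly this distinction: a subtraction algebra admits a generalised-Boolean-algebra structure if and only if it has all finite compatible joins.) Consequently the ``standard prime-filter machinery'' you invoke is simply not available off the shelf. What replaces it in \cite{diff-rest1} is a bespoke theory of \emph{maximal} filters of the meet-semilattice, with the separation property obtained by extending ultrafilters of the Boolean downsets $a^\downarrow$, together with the characterisation restated here as \Cref{p:2} (for $a\in F$ and any $b$, exactly one of $a\bmeet b$ and $a-b$ lies in $F$). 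Building that infrastructure is a substantial part of the cited proof, not a citation to known theory.

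Beyond this, your proposal explicitly defers what you yourself call the ``delicate heart'': that ``defined at $F$'' is independent of the witness $b\in F$, that $\sim_F$ is an equivalence relation interacting correctly with the operations, and that $\theta$ preserves $-$ and $\rest$, all derived from \ref{schein1}--\ref{lifting} alone. These are precisely the points where \ref{eq:8} and \ref{lifting} must be used---they correspond to the properties of the domain-inclusion preorder $\preceq_\cA$ and the induced equivalence summarised in \Cref{p:1} and \eqref{eq:filter_equivalence}---and they constitute most of the actual work in Section~5 of \cite{diff-rest1}. So even after repairing the filter machinery, what you have is a plausible outline of the cited argument with its core lemmas missing, rather than a proof.
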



Algebras over the signature $\{-\}$ satisfying axioms \ref{schein1} --
\ref{schein3} are called \defn{subtraction algebras} and it is known
that in those algebras the $\bmeet$ operation gives a semilattice
structure (which we view as a meet-semilattice), and the downsets of
the form $a^\downarrow \coloneqq\{x \mid x \leq a\}$ are Boolean
algebras~\cite{schein1992difference}. In particular, the same holds
for difference--restriction algebras.

We will call an algebra over the signature $\{\cdot, \rest\}$ that is a semilattice with respect to $\,\bmeet\,$ and also validates axioms \ref{eq:8} and \ref{lifting} a \defn{restriction semilattice}. It has long been known that in a
restriction semilattice the operation $\rest$ is associative (see, for
example, \cite{vagner1962}). Moreover, the inequality $a \rest b \leq
b$ is valid (an algebraic proof appears in \cite{diff-rest1}) and will
be often used without further mention.

It is natural to raise the following questions.

\begin{problem} Determine the structure of the free $X$-generated
difference--restriction algebra, the free $X$-generated subtraction
algebra and the free $X$-generated restriction semilattice.
\end{problem}

This is a good place to define one further term that we will use later.

\begin{definition}\label{def:finite-join-dense}
Let $\algebra{P}$ be a poset. A subset $S$ of $\algebra P$ is
\defn{finite-join dense} (in $\algebra P$) if each $p \in \algebra
P$ is the join $\join T$ of some finite subset $T$ of $S$.
\end{definition}

\section{Adjunction for difference--restriction algebras}\label{sec:duality}
In this section we exhibit a contravariant adjunction between the category of
difference--restriction algebras, $\ralg$, and a certain category $\hauset$ whose
objects are Hausdorff \'etale spaces.

We use $\twoheadrightarrow$ to indicate a (total) surjective function between sets, and we use $\hookrightarrow$ to indicate an embedding of algebras. The notation $\parrow$ indicates a partial function. 
We may at times use a bracket-free notation for applications of functors to morphisms, for example, $Fh$ in place of $F(h)$.

Before giving the claimed adjunction, we define the two categories involved.

\begin{definition}
We denote by $\ralg$ the category whose objects are 
difference--restriction algebras, and whose morphisms are homomorphisms of $\{-, \rest\}$-algebras.
\end{definition}

The following definitions are needed for the definition of $\hauset$.

\begin{definition}\label{def:local_homeomorphism}
Let $X$ and $X_0$ be topological spaces. A \defn{local homeomorphism} is a function $\pi \from X \to X_0$ with the property that each $x \in X$ has an open neighbourhood $U$ such that $\pi(U)$ is open and the restriction $\pi|_U \from U \to \pi(U)$ is a homeomorphism (using subspace topologies).
\end{definition}

Note that a local homeomorphism is necessarily both an open and continuous map.

\begin{definition}\label{def:hausdorff_etale}
By an \defn{\'etale space}, we will mean a surjective local
homeomorphism $\pi \from X \twoheadrightarrow X_0$. We say such an
\'etale space is \defn{Hausdorff} if $X$ is Hausdorff (in which case
$X_0$ is too).
\end{definition}

Note that from the fact that an \'etale space $\pi \from X \twoheadrightarrow X_0$ is continuous, open, and surjective, it follows that the topology on $X_0$ is necessarily the quotient topology.

\begin{definition}\label{def:continuous}
We will call a partial function $\varphi \from X \parrow Y$
\defn{continuous} if whenever $V \subseteq Y$ is open in $Y$ then
$\varphi^{-1}(V)$ is open in $X$. We will say that $\varphi$ is
\defn{proper} if whenever $V \subseteq Y$ is compact then
$\varphi^{-1}(V)$ is also compact.
\end{definition}

Note that in particular, the domain of a continuous partial function must be open.

\begin{definition}\label{category2}
We denote by $\hauset$ the category whose objects are Hausdorff
\'etale spaces $\pi \from X \twoheadrightarrow X_0$, and where a
morphism from $\pi \from X \twoheadrightarrow X_0$ to $\rho \from Y
\twoheadrightarrow Y_0$ is a continuous and proper partial function
$\varphi \from X \parrow Y $ satisfying the following conditions:
\begin{enumerate}[label = (Q.\arabic*)]
\item\label{item:Q1} 
$\varphi$ \defn{preserves equivalence}: if both $\varphi(x)$ and $\varphi(x')$ are defined, then
\[ \pi(x) = \pi(x') \implies \rho(\varphi(x)) = \rho(\varphi(x')).\]
In particular, $\varphi$ induces a partial function
$\widetilde\varphi \from X_0 \parrow Y_0$ given by
\[\widetilde\varphi \coloneqq \{(\pi(x), \rho(\varphi(x))) \mid x \in
\dom (\varphi)\}.\]
\item\label{item:Q2} $\varphi$ is \defn{fibrewise injective}: for
every $(x_0, y_0) \in \widetilde \varphi$, the restriction and
co-restriction of $\varphi$ induces an injective partial map
\[\varphi_{(x_0, y_0)} \from  \pi^{-1}(x_0)\parrow \rho^{-1}(y_0),\]
\item\label{item:Q3} $\varphi$ is \defn{fibrewise surjective}: for
every $(x_0, y_0) \in \widetilde \varphi$, the induced partial map
$\varphi_{(x_0, y_0)}$ is surjective (that is, the image of $\varphi_{(x_0, y_0)}$ is the whole of $\rho^{-1}(y_0)$).\footnote{Note
that in the context of partial maps, the conjunction of
`injective' and `surjective' is not `bijective' in the sense of
a one-to-one correspondence.}
\end{enumerate}
\end{definition}

\begin{remark}
The fact that $\varphi$ is continuous implies that
$\widetilde \varphi$ is continuous as well. Indeed, since $\pi$ is
surjective and $\widetilde \varphi \circ \pi =\rho \circ \varphi$,
given an open subset $V \subseteq Y_0$, we have
\[\widetilde\varphi^{-1}(V) = \pi[\pi^{-1}(\widetilde \varphi^{-1}(V))] =
\pi[(\rho \circ \varphi)^{-1}(V)],\]
which is an open subset of $X_0$ because $\rho$ and $\varphi$ are
both continuous and $\pi$ is an open map.
\end{remark}

\begin{remark}\label{r:1}
A morphism $\varphi$ of \'etale spaces is an isomorphism if, and
only if, the map $\varphi: X \parrow Y$ is a (total) homeomorphism
and
\begin{equation}
\label{eq:4}
\pi(x) = \pi(x') \iff \rho(\varphi(x)) = \rho(\varphi(x')),
\end{equation}
for all $x, x' \in X$.
\end{remark}


In what follows, we define two functors \[F \from \ralg \to
\hauset^{\operatorname{op}}\ \text{ and }\ G \from \hauset^{\operatorname{op}} \to
\ralg\text{,}\] which we then show form an adjunction
(\Cref{t:adj}).

Before defining $F$, we first recall some notation from \cite{diff-rest1}.

\begin{definition}\label{sec:equiv}
Given a restriction semilattice $\algebra A$, the relation
$\preceq_\cA$ on $\algebra A$ is defined by
\[a \preceq_\cA b \iff a \leq b \rest a\]
and is a preorder.  We denote by $\sim_\cA$ the equivalence
relation induced by $\preceq_\cA$, and for a given $a \in \cA$ we
use $[a]$ to denote the equivalence class of~$a$.
\end{definition}

In the case that $\algebra A$ is an actual $\{\cdot, \rest\}$-algebra
of partial functions, the relation $\preceq_\cA$ is the \emph{domain
inclusion} relation $f \preceq_\cA g \iff \dom(f) \subseteq
\dom(g)$.\footnote{See \cite{Schein1970} for axiomatisations for
various signatures containing domain inclusion (as a fundamental
relation).}

The next result summarises some facts about $\preceq_\cA$ that were
proved in~\cite{diff-rest1} and will be used later.

\begin{proposition}\label{p:1}
The following statements hold for a restriction semilattice~$\cA$.
\begin{enumerate}[label = (\alph*)]
\item\label{item:2} The relation $\leq$ is contained in
$\preceq_\cA$ and, if $\cA$ has a bottom element $0$, then $[0] =
\{0\}$.
\item\label{item:3} The poset $\cA/{\sim_\cA}$ is a meet-semilattice
with meet given by $[a]\wedge [b] = [a \rest b]$. Moreover, if
$\cA$ is a difference--restriction algebra then $\cA/{\sim_\cA}$
is a subtraction algebra.
\item\label{item:4} The relations $\leq$ and $\preceq_\cA$ coincide
in each downset $a^\downarrow$. Moreover, the assignment $b
\mapsto [b]$ provides an order-isomorphism between $a^\downarrow$
and $[a]^\downarrow$.
\end{enumerate}
\end{proposition}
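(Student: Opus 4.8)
The plan is to treat the three parts in order, leaning throughout on the two facts recalled just before the statement---that $a \rest b \leq b$ always holds and that $\rest$ is associative---together with the defining axioms \ref{commutative}, \ref{eq:8}, and \ref{lifting}. The semantic picture to keep in mind is that $\preceq_{\cA}$ is domain inclusion and $\cA/{\sim_{\cA}}$ is the ``algebra of domains'': each algebraic identity below is the syntactic shadow of an obvious fact about domains of partial functions.

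For \ref{item:2} I would first record the idempotency $a \rest a = a$ (instantiate \ref{lifting} at $b = a$) and the absorption law $a \leq b \implies a \rest b = a$ (instantiate \ref{lifting} at first argument $b$, giving $(b \bmeet a) \rest b = b \bmeet a$, then apply \ref{commutative}). Granting $a \leq b$, instantiating \ref{eq:8} at $c = a$ yields $a \bmeet (b \rest a) = (a \rest a) \bmeet (b \rest a) = (a \rest b) \rest a = a \rest a = a$, that is, $a \leq b \rest a$, which is exactly $a \preceq_{\cA} b$; hence ${\leq} \subseteq {\preceq_{\cA}}$. For the second claim, instantiating \ref{lifting} at $b = 0$ together with $x \bmeet 0 = 0$ gives $0 \rest x = 0$, so $x \preceq_{\cA} 0$ forces $x \leq 0 \rest x = 0$, while $0 \preceq_{\cA} x$ is automatic; thus $[0] = \{0\}$.

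For \ref{item:3}, after noting that $\preceq_{\cA}$ induces a genuine partial order on $\cA/{\sim_{\cA}}$, I would show that $[a \rest b]$ is the greatest lower bound of $[a]$ and $[b]$; well-definedness of the operation then comes for free from uniqueness of greatest lower bounds. That $a \rest b$ is a lower bound uses $a \rest b \leq b$ (hence $\preceq_{\cA} b$ by \ref{item:2}) and the associativity computation $a \rest (a \rest b) = (a \rest a) \rest b = a \rest b$ (hence $a \rest b \preceq_{\cA} a$). That it is the greatest is the crisp step: if $c \preceq_{\cA} a$ and $c \preceq_{\cA} b$ then $c \leq a \rest c$ and $c \leq b \rest c$, so $c \leq (a \rest c) \bmeet (b \rest c) = (a \rest b) \rest c$ by \ref{eq:8}, i.e.\ $c \preceq_{\cA} a \rest b$. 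For the subtraction-algebra claim I would pass to a representation and identify $\cA/{\sim_{\cA}}$ with the family of domains $\{\dom(a) \mid a \in \cA\}$; this family is closed under intersection (realised by $\rest$) and under relative complement, since $\dom(a - (b \rest a)) = \dom(a) \setminus \dom(b)$, and any family of sets closed under $\cap$ and $\setminus$ is a subtraction algebra.

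For \ref{item:4}, the workhorse is the identity $x \bmeet y = x \rest y$, valid whenever $x, y \leq a$: from $x \leq a$ one gets $a \rest x = x$ (combine \ref{item:2} with $a \rest x \leq x$), and likewise $a \rest y = y$, so instantiating \ref{eq:8} at $c = a$ gives $x \bmeet y = (x \rest a) \bmeet (y \rest a) = (x \rest y) \rest a = x \rest y$, the last equality because $x \rest y \leq y \leq a$. Consequently, for $x, y \in a^\downarrow$, if $x \preceq_{\cA} y$ then $x = y \rest x$ (using $y \rest x \leq x$), whence $x \bmeet y = y \rest x = x$, i.e.\ $x \leq y$; with \ref{item:2} this shows $\leq$ and $\preceq_{\cA}$ coincide on $a^\downarrow$. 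The map $b \mapsto [b]$ is therefore an order-embedding of $a^\downarrow$, and it is onto $[a]^\downarrow$ because any $c \preceq_{\cA} a$ has $c \rest a \leq a$ with $c \rest a \sim_{\cA} c$ (both $c \rest a \preceq_{\cA} c$ and $c \preceq_{\cA} c \rest a$ follow by associativity, using that $c \preceq_{\cA} a$ gives $a \rest c = c$). The main obstacle is the subtraction-algebra assertion in \ref{item:3}: unlike everything else it is not pure equational manipulation in the $\{\bmeet, \rest\}$-reduct but genuinely needs the difference operation, and the cleanest route is to verify closure of the domain family under relative complement via a representation rather than to guess and check the axioms for an induced subtraction on classes.
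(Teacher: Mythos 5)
Your proof is essentially correct, but there is nothing in this paper to compare it against: the paper does not prove \Cref{p:1} at all, it imports the result from the companion paper \cite{diff-rest1} (``summarises some facts about $\preceq_\cA$ that were proved in~\cite{diff-rest1}''). So your argument stands as an independent, self-contained verification, and it is a sound one. The equational core is right: idempotency and absorption ($x \leq a \implies x \rest a = x$) extracted from \ref{lifting} and \ref{commutative}, then \ref{eq:8} to get ${\leq} \subseteq {\preceq_\cA}$, the glb computation showing $[a \rest b]$ is the meet of $[a]$ and $[b]$ (with well-definedness obtained for free from uniqueness of glbs, which is a clean touch), and the identity $x \bmeet y = x \rest y$ on $a^\downarrow$ driving part \ref{item:4}. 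Handling the subtraction-algebra claim of part \ref{item:3} semantically, via a representation, is also legitimate: the paper \emph{defines} difference--restriction algebras as the representable $\{-,\rest\}$-algebras, the relation $\preceq_\cA$ is domain inclusion in any concrete algebra, the domain family is closed under $\cap$ and $\setminus$ (your computation $\dom(a - (b \rest a)) = \dom(a)\setminus\dom(b)$ is correct), and any family of sets closed under set difference satisfies \ref{schein1}--\ref{schein3}. One slip to repair in part \ref{item:4}: you derive $a \rest x = x$ from $x \leq a$ (true, but never used), whereas the first equality of your chain $x \bmeet y = (x \rest a)\bmeet(y \rest a)$ actually needs $x \rest a = x$ and $y \rest a = y$ --- that is, the absorption law you proved in part \ref{item:2} --- and the same law is what justifies the final step $(x \rest y)\rest a = x \rest y$ via $x \rest y \leq y \leq a$. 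Since absorption is already established in your part \ref{item:2}, the computation goes through; just cite it rather than the unused identity.
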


\begin{definition}
Given an algebra $\cA$ with an underlying semilattice structure, we
denote the set of filters of the poset $(\cA, \leq)$ by $\filt
\cA$. A \defn{maximal filter} is a proper filter that is maximal with respect to inclusion amongst all proper filters. 
We denote the set of all maximal filters of $\cA$ by $\pf(\cA)$.
\end{definition}

Maximal
filters admit the following characterisation.
\begin{lemma}[{\cite[\begin{NoHyper}Corollary~5.2\end{NoHyper}]{diff-rest1}}]\label{p:2}
A filter $F \subseteq \cA$ of a difference--restriction algebra
$\cA$ is maximal if, and only if, for all $a\in F$ and $b \in \cA$,
precisely one of $a \cdot b$ and $a - b$ belongs to~$F$.
\end{lemma}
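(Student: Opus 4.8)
The plan is to prove both implications directly from the filter structure, using two facts recalled above. First, $\cA$ has a least element $0 = a - a$ (the same for every $a$, since this holds in any algebra of partial functions), so that a proper filter is precisely one avoiding $0$, and a maximal filter is in particular proper. Second, each principal downset $a^\downarrow$ is a Boolean algebra in which $a\cdot b$ and $a-b$ are complementary; this gives the routine set-theoretic identities $(a\cdot b)\cdot(a-b)=0$ and $(a-c)\cdot c=0$, and, for $x\leq a$, the equivalence $x\cdot(a\cdot b)=0 \iff x\leq a-b$. All of these are valid in every representation, hence throughout the variety.

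For the forward direction, I would assume $F$ is maximal and fix $a\in F$, $b\in\cA$. That $a\cdot b$ and $a-b$ are \emph{not both} in $F$ is immediate: a filter is closed under $\cdot$, so their joint presence would force $(a\cdot b)\cdot(a-b)=0\in F$, contradicting properness. For \emph{at least one}, I would suppose $a\cdot b\notin F$ and deduce $a-b\in F$. Since $a\cdot b\notin F$, the filter generated by $F\cup\{a\cdot b\}$ strictly contains $F$, so by maximality it is improper and therefore contains $0$; computing this generated filter as $\{x \mid x\geq(a\cdot b)\cdot f \text{ for some } f\in F\}$ produces an $f\in F$ with $(a\cdot b)\cdot f=0$. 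Replacing $f$ by $f\cdot a\in F$ yields an element $f_1\in F$ with $f_1\leq a$ and $f_1\cdot(a\cdot b)=0$; read inside the Boolean algebra $a^\downarrow$, this says $f_1\leq a-b$, so upward closure of $F$ delivers $a-b\in F$. Combining the two parts gives that exactly one of them lies in $F$.

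For the converse, I would assume the stated alternative holds for $F$. Properness follows by taking any $a\in F$ and applying the hypothesis with $b=a$: exactly one of $a\cdot a=a$ and $a-a=0$ lies in $F$, and since $a\in F$ we conclude $0\notin F$. For maximality, let $G$ be a proper filter with $F\subseteq G$ and take any $c\in G$; fixing some $a\in F\subseteq G$, the hypothesis gives exactly one of $a\cdot c$ and $a-c$ in $F$. The case $a-c\in F$ is impossible, since then $a-c$ and $c$ both lie in $G$, forcing $(a-c)\cdot c=0\in G$ and contradicting properness of $G$. Hence $a\cdot c\in F$, and as $a\cdot c\leq c$, upward closure gives $c\in F$. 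Thus $G\subseteq F$, so $F=G$ and $F$ is maximal.

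I expect the only genuine subtlety to be the forward ``at least one'' step: one must pass to the generated filter, invoke maximality to force its collapse to $0$, and then reinterpret the resulting equation $f_1\cdot(a\cdot b)=0$ as $f_1\leq a-b$ through the complementation available in $a^\downarrow$. Everything else reduces to the disjointness identities together with the upward closure and meet-closure of filters.
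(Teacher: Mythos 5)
Your proof is correct. Note, however, that the paper itself does not prove this lemma: it is imported verbatim from Corollary~5.2 of \cite{diff-rest1}, so there is no internal argument to compare yours against, and what you have written is a sound, self-contained substitute. The three algebraic facts you isolate---that $0=a-a$ is a least element, that $a\cdot b$ and $a-b$ are complements of one another inside the Boolean algebra $a^\downarrow$ (giving $(a\cdot b)\cdot(a-b)=0$, $(a-c)\cdot c=0$, and, for $x\leq a$, the equivalence $x\cdot(a\cdot b)=0\iff x\leq a-b$), and the description of the filter generated by $F\cup\{a\cdot b\}$ as $\{x\mid x\geq(a\cdot b)\cdot f\text{ for some }f\in F\}$---are all valid, since each can be verified in a representation and transfers through the isomorphism, and the description of the generated filter is correct because $F$ is nonempty and meet-closed. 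The manipulations are exactly right; in particular, replacing $f$ by $f_1=f\cdot a$ so that the equation $f_1\cdot(a\cdot b)=0$ can be read inside $a^\downarrow$ is, as you say, the crux of the forward direction, and the converse correctly exploits $(a-c)\cdot c=0$ to rule out $a-c\in F$ for $c$ in a larger proper filter. The only caveat worth recording is a convention: your converse tacitly assumes $F\neq\emptyset$ (you ``take any $a\in F$''); if empty filters were admitted, the empty filter would vacuously satisfy the right-hand condition without being maximal, so the lemma must be read with the standard convention that filters are nonempty, which is how the paper and \cite{diff-rest1} use it.
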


Also in~\cite{diff-rest1}, it was shown that the operations $\,\bmeet\,$ and $\rest$  of a
restriction semilattice $\cA$ may be extended to $\filt\cA$, endowing
it with a restriction semilattice structure whose underlying partial
order is $\supseteq$. In particular, we defined on $\filt\cA$ the
domain inclusion relation, that will be denoted by~$\ppreceq_\cA$, and
\Cref{p:1} applies. Explicitly, we have
\begin{equation}
F \ppreceq_\cA G \iff G \rest F \subseteq F.\label{eq:2}
\end{equation} The equivalence relation induced by the
domain inclusion relation on the filter algebra is denoted
$\approx_\cA$. The set $\pf(\cA)$ is saturated with respect to $\approx_\cA$; that is, if $\mu \in \pf(\cA)$ and $F \in \filt\cA$, then $\mu \approx_\cA F \implies F \in \pf(\cA)$. 
When restricted to $\pf(\cA)$, the relation $\approx_\cA$ is simply
given by
\begin{equation}\label{eq:filter_equivalence} \mu \approx_\cA \nu \iff
\forall a \in \mu, \forall b \in \nu,\, a \rest b \in \nu.\end{equation}
See \cite[Section~4]{diff-rest1} for more extensive analysis of
relations between filters.
We denote the equivalence class of $\mu$ by $\dbracket \mu$.

\subsection{The functor \texorpdfstring{$F \from \ralg \to
\hauset^{\operatorname{op}}$}{F}}\label{sec:F}\hfill\par
\smallskip We let $F \from \ralg \to \hauset^{\operatorname{op}}$ be
defined as follows.

\noindent\underline{Objects}: Given a difference--restriction algebra $\cA$, the Hausdorff
\'etale space $F(\cA)$ is the canonical quotient
\[\pi_\cA \from \pf(\cA) \twoheadrightarrow \pf(\cA)/{\approx_\cA},\]
with the topology on $\pf(\cA)$ generated by the sets of the
form \[\widehat a\coloneqq\{\mu \in \pf(\cA) \mid a \in \mu\},\]
where $a$ ranges over elements of $\cA$ (and $\pf(\cA)/{\approx_\cA}$ is given the quotient topology). This defines $F$ on the
objects, so we need to show that $F(\cA)$ is indeed a Hausdorff
\'etale space.

First note that since $\widehat a \cap \widehat b = \widehat{a \bmeet b}$, the sets of the form $\widehat a$ are not just a sub-basis, but a basis for the topology. Since $\pf(\cA)/{\approx_\cA}$ is given the quotient topology, $\pi_\cA$ is continuous. For any set $\widehat a$ in the basis, $\pi_\cA(\widehat a)$ is open because for each $\nu \in \pi_\cA^{-1}(\pi_\cA(\widehat a))$ there exists $b \in \nu$, and then $\nu \in \widehat{a \rest b} \subseteq  \pi_\cA^{-1}(\pi_\cA(\widehat a))$. Hence $\pi_\cA$ is an open map. Now to see that $\pi_\cA$ is a local homeomorphism, let $\mu \in \pf(\cA)$ and take any $a \in \mu$. Then $\widehat a$ is an open neighbourhood of $\mu$. Since $\pi_\cA$ is continuous and an open map, to see that $\pi_\cA$ restricts to a homeomorphism on $\widehat a$ it only remains to observe that $\pi_\cA$ is injective on $\widehat a$. This follows from the representation theorem of \cite[Section 5]{diff-rest1}.

It is clear that $\pi_\cA$ is surjective. To see that $\pi_\cA$ is Hausdorff, suppose $\mu$ and $\nu$ are distinct maximal filters. Since maximal filters are maximal, we can find both  $a \in \mu \setminus \nu$ and $b \in \nu \setminus \mu$. Then using \Cref{p:2}, we obtain $a-b \in \mu$ and $b-a \in \nu$; that is, $\mu \in \widehat{a -b}$ and $\nu \in \widehat{b-a}$. These two open neighbourhoods are disjoint, since $\widehat{a -b} \cap  \widehat{b-a} = \widehat{(a-b) \bmeet (b-a)} = \widehat 0 = \emptyset$.

\begin{remark}
If a $\{-, \rest\}$-algebra $\cB$ is a Boolean algebra, then
necessarily $\rest$ coincides with $\,\bmeet\,$ (the meet). Then by
inspecting the definition of $\approx_\cB$, we see that it is the
identity relation on $\cB$. Thus the restriction of $F$ to Boolean
algebras is simply $\pf(\_)$, as in classical Stone duality.
\end{remark}

\begin{lemma}\label{lemma:compact}
Let $\cA$ be a difference--restriction algebra. Then a subset $S$ of
the space $\pf(\cA)$ is compact if and only if $S$ is of the form
$\widehat a_1 \cup \dots \cup \widehat a_n$ for some $a_1, \dots,
a_n \in \cA$.
\end{lemma}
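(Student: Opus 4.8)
The plan is to prove the two implications separately, with the right-to-left direction (that every such finite union is compact) carrying the real content. Throughout I would use three identities about the basic sets, all immediate from the preceding development: first, $\widehat a \cap \widehat b = \widehat{a \bmeet b}$, so the sets $\widehat a$ are closed under finite intersection; second, $\widehat a \setminus \widehat b = \widehat{a - b}$, which follows from \Cref{p:2}, since for a maximal filter $\mu$ one checks that $a \in \mu$ and $b \notin \mu$ hold together if and only if $a - b \in \mu$ (using $a - b \leq a$, the fact that $(a-b)\bmeet b = 0$, and the ``precisely one'' dichotomy of \Cref{p:2}); and third, $\widehat c = \emptyset$ if and only if $c = 0$.

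For the right-to-left direction, since a finite union of compact sets is compact, it suffices to show that each $\widehat a$ is compact. Here I would exploit that the downset $a^{\downarrow}$ is a Boolean algebra (as recalled in \Cref{preliminaries}) and argue that $\mu \mapsto \mu \cap a^{\downarrow}$ identifies $\widehat a$ with the Stone space of $a^{\downarrow}$: by \Cref{p:2}, for $\mu \in \widehat a$ and $x \leq a$ exactly one of $x$ and $a - x$ lies in $\mu$, so $\mu \cap a^{\downarrow}$ is an ultrafilter of $a^{\downarrow}$, and the basic opens correspond via $\widehat a \cap \widehat x = \widehat x$ for $x \leq a$; compactness of $\widehat a$ is then compactness of a Stone space. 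Alternatively I would argue directly: given a cover of $\widehat a$ by basic opens $\widehat{b_i}$ with no finite subcover, the second identity shows every element $a - b_{i_1} - \dots - b_{i_k}$ is nonzero, so these elements (downward directed below $a$) generate a proper filter of $a^{\downarrow}$; extending it to an ultrafilter and thence to a maximal filter $\mu \ni a$ of $\cA$ produces a point of $\widehat a$ avoiding every $\widehat{b_i}$, a contradiction.

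For the left-to-right direction, let $S$ be compact. Since the sets $\widehat a$ form a basis, I would cover $S$ by basic opens and use compactness to extract a finite subcover $\widehat{a_1}, \dots, \widehat{a_n}$; the point to verify is that this cover can be chosen so that the finite subcover \emph{exhausts} $S$ (rather than merely containing it), giving $S = \widehat{a_1} \cup \dots \cup \widehat{a_n}$.

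I expect the main obstacle to be the compactness of a single basic set $\widehat a$, as this is the only step where genuine topological content enters: it rests on having enough maximal filters, concretely that every ultrafilter of the Boolean algebra $a^{\downarrow}$ extends to a maximal filter of $\cA$, which is supplied by the representation theorem of \cite{diff-rest1}. Once this is available, both implications reduce to routine use of the three displayed identities together with the elementary facts that finite unions of compact sets are compact and that basic sets are open; the secondary delicate point, noted above, is ensuring equality (not just containment) in the forward implication.
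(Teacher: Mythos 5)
Your treatment of the right-to-left direction (the substantive one) is correct and is essentially the paper's own argument: the paper likewise reduces the claim to compactness of each $\widehat a$ and gets that from \cite[Proposition~5.4]{diff-rest1}, which identifies $\widehat a$ with the Stone space of the Boolean algebra $a^\downarrow$. Both of your variants---the bijection $\mu \mapsto \mu \cap a^\downarrow$ and the direct filter-extension argument---hinge on exactly the fact you isolate, namely that ultrafilters of $a^\downarrow$ extend to maximal filters of $\cA$, and that is precisely what the cited representation theorem supplies.

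The gap is in the left-to-right direction, and it is not a ``point to verify'' that can in fact be verified: for a general compact $S$ the claim is false. Take $\cA$ to be an atomless Boolean algebra, viewed as a difference--restriction algebra with $\rest$ interpreted as $\bmeet$; then $\pf(\cA)$ is the classical Stone space, a singleton $\{\mu\}$ is compact, but no singleton is open (atomlessness means no isolated points), so $\{\mu\}$ is not a finite union of sets $\widehat a$. What is true, and what the later applications in the paper actually require, is the statement for compact \emph{open} $S$: openness is exactly what licenses choosing the covering basic sets $\widehat a \subseteq S$, so that a finite subcover has union equal to $S$ rather than merely containing it. So your instinct that equality versus containment is the delicate point is right, but the only way to close it is to add the hypothesis that $S$ is open; without it the implication fails. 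You are in good company here: the paper's own proof waves this step through by asserting that, in any space with a basis of compact sets, a subset is compact if and only if it is a finite union of basis sets---which is false for the same reason---so the lemma as stated needs the same repair, and its later uses (e.g.\ the properness of $Fh$ and of $\lambda_\pi$) need either the compact-open reading or a separate argument (compact sets are closed in these Hausdorff spaces, and closed subsets of compact sets are compact).
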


\begin{proof}
Given any compact basis of a space, it is straightforward to see that a subset is compact if and only if it is a finite union of basis sets. Since the sets of the form $\widehat a$ are a basis of the space $\pf(\cA)$, 
we only need to check that each $\widehat a$ is compact. This follows from the fact that the map $a^\downarrow \to \mathcal P\{\mu \in \pf(\cA) \mid a \in \mu\}$ (where $\mathcal P$ denotes powerset) with $b \mapsto \widehat b$ is a representation of the Boolean algebra $a^\downarrow$ that is isomorphic to the Stone representation (\cite[Proposition~5.4]{diff-rest1}).
\end{proof}

\noindent\underline{Morphisms}: For defining $F$ on morphisms, we let $h \from \cA \to \cB$ be a
homomorphism of difference--restriction algebras.  Then the partial function $Fh \from
\pf(\cB) \parrow \pf(\cA)$ has domain $\cB_0 = \bigcup_{a \in
\cA}\widehat{h(a)}$, and for $\xi \in \cB_0$, we set $Fh(\xi) =
h^{-1}(\xi)$. Let us prove that $Fh$ defines a morphism in
$\hauset$. Using \Cref{p:2}, it is easily seen that $Fh$ is
a well-defined function and a routine computation shows that it
satisfies $(Fh)^{-1}(\widehat a) = \widehat{h(a)}$ for all $a \in
\cA$. In particular, $Fh$ is continuous (since the $\widehat {\phantom a}$ sets form a basis) and proper (using \Cref{lemma:compact}). Moreover, it
follows from~\eqref{eq:filter_equivalence} and the fact that $h$ is a homomorphism, that $Fh$ satisfies
\ref{item:Q1}. For \ref{item:Q2}, we let $\xi_1 \approx_\cB \xi_2$
satisfy $Fh(\xi_1) = Fh(\xi_2)$, that is, $h^{-1}(\xi_1) =
h^{-1}(\xi_2)$. Choosing $a \in h^{-1}(\xi_1) = h^{-1}(\xi_2)$, we
have $h(a) \in \xi_1 \cap \xi_2$, that is, both $\xi_1$ and $\xi_2$
belong to the downset of $(\filt \cB, \supseteq)$ determined by the
principal filter $h(a)^{\uparrow}$. Since $\xi_1 \approx_\cB \xi_2$ it
follows from \Cref{p:1}\ref{item:4} that $\xi_1 = \xi_2$.
Finally, let us check that \ref{item:Q3} holds. Given $\xi \in \cB_0$
and $\mu \in \pf(\cA)$ satisfying $\mu \approx Fh(\xi)$, we let $G$ be
the filter generated by the set $\{h(a) \mid a \in \mu\}$. From the fact that $h$ is a homomorphism, it is easy to see that $\{h(a) \mid a \in \mu\}$ is a filter base; thus $G = \{h(a) \mid a \in \mu\}^\uparrow$. Using again the fact that $h$ is a homomorphism, and the fact that $\rest$ is order-preserving in both arguments, we obtain both $G \rest \xi$ and $\xi \rest G$. That is $G \approx_{\algebra B} \xi$. Since $\pf(\algebra B)$ is saturated with respect to $\approx_{\algebra B}$, the filter $G$ is also maximal.
Then $h^{-1}(G) \supseteq \mu$. Since $h^{-1}(G)$ is nonempty, it is a maximal filter, then by maximality, we obtain $h^{-1}(G) = \mu$. That is, $Fh(G) = \mu$, witnessing fibrewise surjectivity. 

Since $F$ is given by inverse image, it is clear that $F$ validates the equalities necessary to be a functor. To summarise, we have proved the following.

\begin{proposition}
There is a functor \[F \from \ralg \to \hauset^{\operatorname{op}}\]
that maps a  difference--restriction algebra $\cA$ to
the Hausdorff \'etale space \[\pi_\cA \from \pf(\cA) \twoheadrightarrow
\pf(\cA)/{\approx_\cA}.\]
\end{proposition}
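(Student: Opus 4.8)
The plan is to establish the three things required of a contravariant functor: that the object assignment lands in $\hauset$, that the morphism assignment produces $\hauset$-morphisms, and that the assignment respects identities and composition. For the object part, given a difference--restriction algebra $\cA$, I would first topologise $\pf(\cA)$ by declaring the sets $\widehat{a}$ to be a basis, which is legitimate since $\widehat{a} \cap \widehat{b} = \widehat{a \bmeet b}$. I would then check that $\pi_\cA \from \pf(\cA) \twoheadrightarrow \pf(\cA)/{\approx_\cA}$ is a surjective local homeomorphism with $\pf(\cA)$ Hausdorff. Continuity is automatic from the quotient topology, and openness follows because for any $\nu \in \pi_\cA^{-1}(\pi_\cA(\widehat{a}))$ one may pick $b \in \nu$ and observe $\nu \in \widehat{a \rest b} \subseteq \pi_\cA^{-1}(\pi_\cA(\widehat{a}))$. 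Given continuity and openness, the local homeomorphism property reduces to injectivity of $\pi_\cA$ on each basic open $\widehat{a}$, which I would read off from the representation theorem of \cite[Section~5]{diff-rest1}. Hausdorffness I would get by separating distinct maximal filters $\mu, \nu$: choosing $a \in \mu \setminus \nu$ and $b \in \nu \setminus \mu$, \Cref{p:2} gives $a - b \in \mu$ and $b - a \in \nu$, while $\widehat{a-b} \cap \widehat{b-a} = \widehat{0} = \emptyset$.

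For the morphism part, given a homomorphism $h \from \cA \to \cB$, I would define $Fh \from \pf(\cB) \parrow \pf(\cA)$ on the open set $\cB_0 = \bigcup_{a \in \cA} \widehat{h(a)}$ by $\xi \mapsto h^{-1}(\xi)$. The first task is to confirm that $h^{-1}(\xi)$ is again a maximal filter, which I would deduce from the characterisation in \Cref{p:2} together with $h$ being a homomorphism. The engine driving the rest is the identity $(Fh)^{-1}(\widehat{a}) = \widehat{h(a)}$: it yields continuity immediately, since the $\widehat{\phantom a}$ sets form a basis, and properness via \Cref{lemma:compact}. Condition \ref{item:Q1} follows from \eqref{eq:filter_equivalence} and the homomorphism property, and \ref{item:Q2} from \Cref{p:1}\ref{item:4} after noting that equal preimages sitting in a common principal downset of $(\filt\cB, \supseteq)$ must coincide.

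The crux, and the step I expect to be the main obstacle, is fibrewise surjectivity \ref{item:Q3}. Here, given $\xi \in \cB_0$ and $\mu \approx_\cA Fh(\xi)$, I would form the filter $G$ generated by $\{h(a) \mid a \in \mu\}$, first checking that this set is a filter base so that $G = \{h(a) \mid a \in \mu\}^\uparrow$, then verifying $G \approx_\cB \xi$ using that $h$ is a homomorphism and that $\rest$ is order-preserving in both arguments. Saturation of $\pf(\cB)$ then makes $G$ maximal, and $h^{-1}(G) \supseteq \mu$ forces $h^{-1}(G) = \mu$ by maximality, exhibiting $G$ as the required witness. This constructive production of a preimage filter is the genuinely delicate part, since it is the only place where one must build, rather than merely pull back, a maximal filter.

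Finally, functoriality is routine. As $Fh$ acts by preimage under $h$, we have $F(\mathrm{id}_\cA) = \mathrm{id}_{F(\cA)}$, and from $(h \circ g)^{-1} = g^{-1} \circ h^{-1}$ we obtain the contravariant composition law $F(h \circ g) = F(g) \circ F(h)$ in $\hauset$, equivalently the covariant law into $\hauset^{\operatorname{op}}$. The only point needing care, because these are partial functions, is that the domains of the two composite maps coincide, which I would confirm directly from the description of $\cB_0$ and the identity $(Fh)^{-1}(\widehat{a}) = \widehat{h(a)}$.
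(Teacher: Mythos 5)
Your proposal is correct and follows essentially the same route as the paper's proof: the same basis $\widehat{a}$ and openness/injectivity arguments for the object part, the same use of \Cref{p:2}, \Cref{lemma:compact}, \eqref{eq:filter_equivalence} and \Cref{p:1}\ref{item:4} for the morphism conditions, and in particular the identical construction of the witness filter $G = \{h(a) \mid a \in \mu\}^\uparrow$ for fibrewise surjectivity via saturation of $\pf(\cB)$. The only (harmless) addition is your explicit remark about checking that domains agree when verifying the composition law for partial maps.
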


\subsection{The functor \texorpdfstring{$G \from \hauset^{\operatorname{op}} \to
\ralg$}{G}}\hfill\par
\smallskip 

\begin{definition}\label{def:section}
Given an \'etale space $\pi \from X
\twoheadrightarrow X_0$ a \defn{partial section} is a continuous partial function $f \from X_0
\parrow X $ satisfying $\pi \circ f = \mathrm{id}_{\dom(f)}$.
\end{definition}

\begin{remark}\label{r:2}
We note the following facts about partial sections of an \'etale
space $\pi \from X \twoheadrightarrow X_0$.
\begin{enumerate}
\item Suppose $U \subseteq X$ is open and $\pi$ is injective on
$U$. Then by continuity of $\pi$, the set $\{(\pi(x), x) \mid x
\in U\}$ is a partial section with (open) image $U$. Conversely,
$\pi$ is always injective on the image of a partial section (in
particular when the image is open). Thus the partial sections with
open image are in bijection with open subsets of $X$ on which
$\pi$ is injective.

\item If the image $U$ of a partial section $f$ is compact, then by
continuity of $\pi$, the domain $\pi[U]$ of $f$ is also
compact. Conversely, if $\dom(f)$ is compact, then it follows from
$f$ being a continuous partial function, that the image of $f$ is
compact. Thus compact image is equivalent to compact domain, for
partial sections.
\end{enumerate}
\end{remark}
\begin{lemma}\label{l:2}
Let $\pi \from X \twoheadrightarrow X_0$ be a Hausdorff \'etale space. Then the assignments
\begin{equation}
f \mapsto \imm(f)\qquad \text{ and }\qquad U \mapsto f_U:= \{(\pi (x), x) \mid
x \in U\}\label{eq:3}
\end{equation}
define a one-to-one correspondence between the set of partial
sections $f$ of $\pi$ whose image is compact and open and the set
$\ko\pi$ of compact open subsets $U\subseteq X$ such that $\pi|_U$
is injective. Moreover, $\ko\pi$ is a difference--restriction algebra
when equipped with the operations defined by
\begin{equation}
\begin{aligned}
U -_\pi V & := \{x \in X \mid x \in U \text{ and } x \notin V\},
\\ U \rest_\pi V & := \pi^{-1}(\pi(U)) \cap V.
\end{aligned}\label{eq:dual_operations}
\end{equation}
for all $U, V \in \ko\pi$.
\end{lemma}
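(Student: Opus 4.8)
The plan is to reduce the bijection entirely to \Cref{r:2}, and then to prove membership in $\ralg$ not by checking \ref{schein1}--\ref{lifting} directly, but by exhibiting $(\ko\pi, -_\pi, \rest_\pi)$ as literally isomorphic to a concrete algebra of partial functions, so that the conclusion follows from the very \emph{definition} of a difference--restriction algebra.

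For the one-to-one correspondence, I would simply specialise part~(1) of \Cref{r:2}, which already gives that $f \mapsto \imm(f)$ and $U \mapsto f_U$ are mutually inverse bijections between the partial sections of $\pi$ with open image and the open subsets of $X$ on which $\pi$ is injective. Since $\imm(f_U) = U$, the image of $f_U$ is compact precisely when $U$ is, so restricting to sections of compact open image corresponds exactly to restricting to $U \in \ko\pi$; no further argument is needed here.

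The hard part will be checking that $-_\pi$ and $\rest_\pi$ really map $\ko\pi \times \ko\pi$ into $\ko\pi$, and this is where the Hausdorff hypothesis does all the work. For $U -_\pi V = U \cap (X \setminus V)$: as $V$ is compact in a Hausdorff space it is closed, so $U \cap (X \setminus V)$ is open; moreover $U \cap V$ is open in $U$ (because $V$ is open) and, being closed and contained in the compact set $U$, is also compact and hence closed in the Hausdorff subspace $U$, so $U \cap V$ is clopen in $U$ and therefore $U -_\pi V$ is closed in the compact set $U$, thus compact. For $U \rest_\pi V = \pi^{-1}(\pi(U)) \cap V$: since $\pi$ is open, continuous, and surjective, $\pi(U)$ is open and, as the continuous image of the compact set $U$, compact, hence clopen in the Hausdorff space $X_0$; therefore $\pi^{-1}(\pi(U))$ is clopen in $X$, so $\pi^{-1}(\pi(U)) \cap V$ is open and is a closed subset of the compact set $V$, hence compact. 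In both cases $\pi$ stays injective on the smaller set, so the results lie in $\ko\pi$.

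Finally I would verify that the bijection $\Phi \from U \mapsto f_U$ intertwines the two operations with the set-theoretic ones on partial functions. A direct computation, using $\dom(f_U) = \pi(U)$, gives $f_U - f_V = \{(\pi(y), y) \mid y \in U,\ y \notin V\} = f_{U -_\pi V}$ and $f_U \rest f_V = \{(\pi(y), y) \mid y \in V,\ \pi(y) \in \pi(U)\} = f_{U \rest_\pi V}$. Consequently the set $\{f_U \mid U \in \ko\pi\}$ is closed under relative complement and domain restriction, so it is an algebra of partial functions, and $\Phi$ is an isomorphism of $\{-, \rest\}$-algebras from $(\ko\pi, -_\pi, \rest_\pi)$ onto it. Hence $\ko\pi$ is isomorphic to an algebra of partial functions, i.e.\ a difference--restriction algebra, as required.
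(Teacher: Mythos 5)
Your proof is correct and is essentially the paper's proof viewed in the mirror: the paper checks closure under the set-theoretic $-$ and $\rest$ on the side of partial sections (the collection $\cA^{\mathrm{co}}_\pi$ of sections with compact open image) and then transfers the operations to $\ko\pi$ via the identities $\imm(f-g) = \imm(f) - \imm(g)$ and $\imm(f \rest g) = \imm(g) \cap \pi^{-1}(\dom(f))$, whereas you check closure of $\ko\pi$ under $-_\pi$ and $\rest_\pi$ directly and then transfer to sections via $f_U - f_V = f_{U -_\pi V}$ and $f_U \rest f_V = f_{U \rest_\pi V}$. Both arguments reduce the bijection to \Cref{r:2} and use exactly the same Hausdorff/compactness facts, so the two proofs coincide up to which side of the correspondence the topological work is carried out on.
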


\begin{proof} The fact that the assignments of~\eqref{eq:3} are
well-defined and mutually inverse is a consequence of
\Cref{r:2}.

We now verify that the collection $\cA^{\mathrm {co}}_\pi$ of partial
sections of $\pi$ whose image is compact and open is closed under
the operations of relative complement and domain restriction.

Recall that for Hausdorff spaces, compact implies closed. For all
$f,g \in \cA^{\mathrm {co}}_\pi$, the partial function $f-g$ is a
restriction of $f$, so $\pi$ is injective on the image of
$f-g$. Further, $\imm(f-g) = \imm(f) - \imm(g)$. So since $\imm(f)$
and $\imm(g)$ are clopen, so is $\imm(f-g)$. Then since $\imm(f-g)$
is a closed subset of the compact set $\imm(f)$, we see that
$\imm(f-g)$ is also compact. Thus $f - g \in \cA^{\mathrm
{co}}_\pi$.

For domain restriction, note that as $\dom(f)$ is clopen and $\pi$
is continuous, $\pi^{-1}(\dom(f))$ is clopen. Thus $\imm(f \rest g)
= \imm(g) \cap \pi^{-1}(\dom(f))$ is clopen. Then $\imm(f \rest g)$
is a closed subset of the compact $\imm(g)$, so $\imm(f \rest g)$ is
also compact. Thus $\imm(f \rest g)$ is a compact open subset on
which $\pi$ is injective (as $\imm(f \rest g) \subseteq
\imm(g)$). Hence $f \rest g \in \cA^{\mathrm {co}}_\pi$.  Thus
$\cA^{\mathrm {co}}_\pi$ is a difference--restriction algebra.

Finally, the fact that the operations on $\ko\pi$ inherited from
those on $\cA_\pi^{\mathrm{co}}$ are as in~\eqref{eq:dual_operations} is a
consequence of the equalities $\imm(f-g) = \imm(f) - \imm(g)$ and
$\imm(f \rest g) = \imm(g) \cap \pi^{-1}(\dom(f))$, holding for
(all) partial sections $f, g$ of $\pi$.
\end{proof}

\begin{example}
If $X_0$ is ${\mathbb R}$ with the standard topology, then only $\emptyset$ is compact and open. Since for any element $U$ of $\ko\pi$, the set $\pi(U)$ is necessarily compact and open, $\ko\pi$ is then the singleton $\{\emptyset\}$. In general, the further away $X_0$ is from a zero-dimensional space (which is plentiful in compact and open sets, as they form a basis of the topology), the poorer the algebra $\ko\pi$ is.
\end{example}

We now define the functor $G \from \hauset^{\operatorname{op}} \to
\ralg$.

\noindent\underline{Objects}: Given a \emph{Hausdorff} \'etale space
$\pi \from X \to X_0$, we let $G(\pi) = \ko\pi$ be the
difference--restriction algebra consisting of all compact open subsets
of $X$ such that $\pi|_U$ is injective, equipped with the operations
defined in \Cref{l:2}.

\noindent\underline{Morphisms}: To define $G$ on morphisms, take a
morphism $\varphi$ from $(\pi \from X \twoheadrightarrow X_0)$ to
$(\rho \from Y \twoheadrightarrow Y_0)$ in $\hauset$ (that is, a
morphism from $\rho$ to $\pi$ in $\hauset^{\operatorname{op}}$). We
let $G\varphi \from \ko\rho \to \ko \pi$ assign each $U \in \ko \rho$
to the set $G\varphi(U) := \varphi^{-1}(U)$. Since $\varphi$ is
continuous and proper, $\varphi^{-1}(U)$ is a compact open subset of
$X$. Moreover, if $x, x' \in \varphi^{-1}(U)$ are such that $\pi(x) =
\pi(x')$ then, by~\ref{item:Q1}, we have $\rho(\varphi(x)) =
\rho(\varphi(x'))$ and, since $\rho|_U$ is injective, $\varphi(x) =
\varphi(x')$. By~\ref{item:Q2}, it follows that $x = x'$, thereby
proving that $\pi|_{\varphi^{-1}(U)}$ is injective.
Thus $G\varphi (U)$ is indeed an element of $\ko\pi$.

\begin{lemma}\label{l:1}
For every morphism $\varphi$ as above, the function $G\varphi$
defines a homomorphism of $\{-, \rest\}$-algebras.
\end{lemma}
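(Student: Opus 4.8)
The plan is to verify directly that $G\varphi$ commutes with each of the two fundamental operations $-$ and $\rest$, using the explicit formulas~\eqref{eq:dual_operations} together with the morphism conditions \ref{item:Q1}--\ref{item:Q3}. Throughout I write $\varphi^{-1}(W) = \{x \in \dom(\varphi) \mid \varphi(x) \in W\}$ for the preimage of $W \subseteq Y$ under the partial function $\varphi$, and I use that every such preimage is contained in $\dom(\varphi)$ and that preimages under a partial function commute with finite intersections and with set difference. The relative-complement case is then immediate: since both $-_\rho$ and $-_\pi$ are set-theoretic difference, for $U, V \in \ko\rho$ we have
\[
G\varphi(U -_\rho V) = \varphi^{-1}(U -_\rho V) = \varphi^{-1}(U) -_\pi \varphi^{-1}(V) = G\varphi(U) -_\pi G\varphi(V).
\]

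The substantive case is domain restriction, and here I would first unfold both sides. Using $U \rest_\rho V = \rho^{-1}(\rho(U)) \cap V$ and commutation of preimages with intersection,
\[
G\varphi(U \rest_\rho V) = \varphi^{-1}(\rho^{-1}(\rho(U))) \cap \varphi^{-1}(V),
\]
whereas, by definition of $\rest_\pi$,
\[
G\varphi(U) \rest_\pi G\varphi(V) = \pi^{-1}(\pi(\varphi^{-1}(U))) \cap \varphi^{-1}(V).
\]
Because both expressions are intersected with $\varphi^{-1}(V) \subseteq \dom(\varphi)$, it suffices to establish the set equality
\[
\varphi^{-1}(\rho^{-1}(\rho(U))) = \pi^{-1}(\pi(\varphi^{-1}(U))) \cap \dom(\varphi).
\]
The left-hand side consists of those $x \in \dom(\varphi)$ with $\rho(\varphi(x)) \in \rho(U)$, while a point $x \in \dom(\varphi)$ lies in the right-hand side exactly when $\pi(x) = \pi(x')$ for some $x' \in \varphi^{-1}(U)$. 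For the inclusion $\supseteq$, given such $x$ and $x'$, both $\varphi(x)$ and $\varphi(x')$ are defined and $\pi(x) = \pi(x')$, so equivalence preservation \ref{item:Q1} gives $\rho(\varphi(x)) = \rho(\varphi(x')) \in \rho(U)$. For the inclusion $\subseteq$, suppose $x \in \dom(\varphi)$ with $\rho(\varphi(x)) = \rho(u)$ for some $u \in U$; setting $x_0 = \pi(x)$ and $y_0 = \rho(\varphi(x)) = \rho(u)$ gives $(x_0, y_0) \in \widetilde\varphi$ and $u \in \rho^{-1}(y_0)$, so fibrewise surjectivity \ref{item:Q3} produces $x' \in \pi^{-1}(x_0)$ with $\varphi(x') = u \in U$, whence $x' \in \varphi^{-1}(U)$ and $\pi(x') = x_0 = \pi(x)$, placing $x$ in the right-hand side.

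I expect the domain-restriction case to be the only real obstacle, and within it the inclusion $\subseteq$, which is precisely where fibrewise surjectivity \ref{item:Q3} is required in order to manufacture a preimage lying in the correct $\pi$-fibre; the reverse inclusion is the routine consequence of equivalence preservation \ref{item:Q1}. It is worth noting that fibrewise injectivity \ref{item:Q2} is not needed for this lemma (it was already used above to show that $G\varphi$ lands in $\ko\pi$), and that the continuity and properness of $\varphi$ play no role in the homomorphism identities themselves, serving only to guarantee that $G\varphi(U) = \varphi^{-1}(U)$ is again compact and open.
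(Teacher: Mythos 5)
Your proof is correct and follows essentially the same route as the paper's: both reduce the domain-restriction case to a preimage identity for $\varphi^{-1}(\rho^{-1}(\rho(U)))$, proved in one direction via fibrewise surjectivity \ref{item:Q3} and in the other via equivalence preservation \ref{item:Q1}, with the relative-complement case dismissed as immediate. One small point in your favour: your version of the identity, with the extra intersection with $\dom(\varphi)$ on the right-hand side, is the literally correct one---the paper's equation \eqref{eq:7} as stated can fail at points of $\pi^{-1}(\pi(\varphi^{-1}(U)))$ lying outside $\dom(\varphi)$ (conditions \ref{item:Q1}--\ref{item:Q3} do not force such points into $\dom(\varphi)$), though this is harmless in the paper's argument since the identity is only ever applied after intersecting with $\varphi^{-1}(V) \subseteq \dom(\varphi)$.
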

\begin{proof}
The map $G\varphi$ preserves the relative complement operation
because this is given by set theoretical difference and taking
preimages under any function preserves this operation. To see that
$G\varphi$ also preserves domain restriction we only need to prove
that, for all $U \in \ko\rho$, the equality
\begin{equation}
\varphi^{-1}(\rho^{-1}(\rho(U))) =
\pi^{-1}(\pi(\varphi^{-1}(U)))\label{eq:7}
\end{equation}
holds. Indeed, if that is the case then, given $U, V \in \ko \rho$,
we have
\[G\varphi(U \rest_\rho V) = \varphi^{-1}(\rho^{-1}(\rho(U)) \cap V)
= \pi^{-1}(\pi(\varphi^{-1}(U))) \cap \varphi^{-1}(V) =
\varphi^{-1}(U) \rest_\pi \varphi^{-1}(V).\]
Let us then prove~\eqref{eq:7}. Take $x \in X$. Then
\begin{align*}
x \in \varphi^{-1}(\rho^{-1}(\rho(U)))
& \iff \exists y \in U \colon \rho(\varphi(x)) = \rho(y)
\\ & \just{\ref{item:Q3}}\iff \exists x' \in X \colon \pi(x') = \pi(x), \,
\rho(\varphi(x)) = \rho(\varphi(x')), \text{ and }
\varphi(x') \in U
\\ &  \just{\ref{item:Q1}}\iff \exists x' \in \varphi^{-1}(U)
\colon \pi(x) = \pi(x')
\\ & \iff x \in \pi^{-1}(\pi(\varphi^{-1}(U))).\popQED\qed
\end{align*}
\end{proof}

We observe that, under the correspondence given by \Cref{l:2}, the
morphism $G\varphi$ is defined as in~\cite[Section~3.2]{diff-rest2},
when $\pi$ and $\rho$ are seen as set quotients. Observing that, for
every \'etale space $\pi$, under the correspondence of \Cref{l:2},
$\ko\pi$ is a $\{-, \rest\}$-subalgebra of the algebra of all partial
sections of $\pi$, \Cref{l:1} can be proved using the argument
of~\cite[Lemma~3.7]{diff-rest2}.

Since $G$ on morphisms is given by inverse image, it is clear that $G$
validates the equations necessary to be a functor. We have proved the
following.

\begin{proposition}
There is a functor
\[G \from \hauset^{\operatorname{op}} \to \ralg\]
that sends a Hausdorff \'etale space $\pi \from X \twoheadrightarrow
X_0$ to the $\{-, \rest\}$-algebra $\ko\pi$ consisting of all
compact and open subsets of $X$ on which $\pi$ is injective.
\end{proposition}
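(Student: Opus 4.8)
The plan is to assemble the ingredients already established in \Cref{l:2} and \Cref{l:1}, the only genuinely new content being the verification of the two functor equations. On objects, \Cref{l:2} shows that $G(\pi) = \ko\pi$ is a difference--restriction algebra, so $G$ indeed lands in $\ralg$ on objects. On morphisms, the discussion preceding \Cref{l:1} shows that for a morphism $\varphi$ from $\pi \from X \twoheadrightarrow X_0$ to $\rho \from Y \twoheadrightarrow Y_0$ in $\hauset$, the assignment $U \mapsto \varphi^{-1}(U)$ carries each $U \in \ko\rho$ to an element of $\ko\pi$ (using that $\varphi$ is continuous and proper to secure compact-openness, and conditions \ref{item:Q1} and \ref{item:Q2} to secure injectivity of $\pi$ on the preimage), while \Cref{l:1} shows this map is a homomorphism of $\{-, \rest\}$-algebras. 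Thus each $G\varphi$ is a well-defined $\ralg$-morphism $\ko\rho \to \ko\pi$, with source and target matching the contravariant bookkeeping of $\hauset^{\operatorname{op}}$.

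It then remains to check that $G$ preserves identities and composition, and since $G\varphi$ is defined by inverse image both follow from the elementary behaviour of preimage. The identity morphism on $\pi$ is the total identity map $\mathrm{id}_X \from X \parrow X$, for which $G(\mathrm{id}_X)(U) = \mathrm{id}_X^{-1}(U) = U$, so $G(\mathrm{id}_X) = \mathrm{id}_{\ko\pi}$. For composition, given composable $\hauset$-morphisms $\varphi$ from $\pi$ to $\rho$ and $\psi$ from $\rho$ to $\sigma \from Z \twoheadrightarrow Z_0$, whose composite in $\hauset$ is the partial function $\psi \circ \varphi$, I would compute for each $U \in \ko\sigma$
\[G(\psi \circ \varphi)(U) = (\psi \circ \varphi)^{-1}(U) = \varphi^{-1}(\psi^{-1}(U)) = (G\varphi \circ G\psi)(U),\]
which is precisely the composition law required of a functor out of $\hauset^{\operatorname{op}}$, the order of $\varphi$ and $\psi$ being reversed relative to $\hauset$. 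Hence all the functor equations hold.

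The point I would want to make explicit rather than leave implicit --- and the only place where anything could go wrong --- is that composition of morphisms in $\hauset$ really is given by composition of the underlying partial functions, and that the identity morphism is the total identity map (which one should check satisfies \ref{item:Q1}--\ref{item:Q3} and is continuous and proper). Once those two facts about the category $\hauset$ are fixed, the standard preimage identities $(\psi \circ \varphi)^{-1} = \varphi^{-1} \circ \psi^{-1}$ and $\mathrm{id}^{-1} = \mathrm{id}$ deliver functoriality with no further calculation, so I do not expect any real obstacle beyond the bookkeeping of the opposite category.
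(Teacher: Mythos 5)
Your proposal is correct and follows essentially the same route as the paper: the paper likewise assembles \Cref{l:2} for objects, the preceding discussion plus \Cref{l:1} for morphisms, and then disposes of the functor equations with the remark that ``since $G$ on morphisms is given by inverse image, it is clear that $G$ validates the equations necessary to be a functor.'' Your only addition is to spell out explicitly the preimage identities and the fact that identities and composition in $\hauset$ are those of partial functions, which the paper leaves implicit.
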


We can now prove that we have the claimed adjunction.
\begin{theorem}\label{t:adj}
The functors $F \from \ralg \to \hauset^{\operatorname{op}}$ and $G
\from \hauset^{\operatorname{op}} \to \ralg$ form an adjunction. That
is
\[F \dashv G.\]
\end{theorem}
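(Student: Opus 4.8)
The plan is to establish the adjunction by exhibiting a bijection
\[\ralg(\cA, G\pi) \;\cong\; \hauset^{\operatorname{op}}(F\cA, \pi) = \hauset(\pi, F\cA)\]
that is natural in $\cA \in \ralg$ and $\pi \in \hauset^{\operatorname{op}}$, where $G\pi = \ko\pi$ and $F\cA = \pi_\cA \from \pf(\cA) \twoheadrightarrow \pf(\cA)/{\approx_\cA}$. I will give the two transposes explicitly. Given a homomorphism $g \from \cA \to \ko\pi$, I define a partial function $\psi_g \from X \parrow \pf(\cA)$ with domain $\bigcup_{a \in \cA} g(a)$ by $\psi_g(x) \coloneqq \{a \in \cA \mid x \in g(a)\}$. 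In the other direction, given a $\hauset$-morphism $\psi \from \pi \to \pi_\cA$, I set $\Psi(\psi) \coloneqq G\psi \circ \eta_\cA$, where $\eta_\cA \from \cA \to \ko{\pi_\cA}$ is the representation $a \mapsto \widehat a$ of \cite[Section~5]{diff-rest1}; concretely $\Psi(\psi)(a) = \psi^{-1}(\widehat a)$. Since $\eta_\cA$ is a homomorphism (the representation theorem) and $G\psi$ is a homomorphism by \Cref{l:1}, $\Psi(\psi)$ is automatically a homomorphism, so this side needs no separate algebraic check.

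First I would verify that $\psi_g$ is a well-defined $\hauset$-morphism. As $g$ preserves $-$ (hence $\cdot$) it is order-preserving, so each $\psi_g(x)$ is a filter; it is proper because $g(a - a) = \emptyset$ shows $a - a \notin \psi_g(x)$; and \Cref{p:2} shows it is maximal, using $g(a\cdot b) = g(a)\cap g(b)$ and $g(a-b) = g(a)\setminus g(b)$. Continuity and properness of $\psi_g$ are immediate from $\psi_g^{-1}(\widehat a) = g(a)$ together with \Cref{lemma:compact}. Condition \ref{item:Q1} follows from the description \eqref{eq:filter_equivalence} of $\approx_\cA$ and the identity $g(a \rest b) = g(a)\rest_\pi g(b) = \pi^{-1}(\pi(g(a))) \cap g(b)$ coming from \eqref{eq:dual_operations}. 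For \ref{item:Q2}, if $\pi(x) = \pi(x')$ and $\psi_g(x) = \psi_g(x')$, I pick $a$ with $x \in g(a)$; then $x' \in g(a)$ as well, and injectivity of $\pi$ on $g(a) \in \ko\pi$ forces $x = x'$.

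The main obstacle is fibrewise surjectivity \ref{item:Q3}. Given a maximal filter $\nu$ with $\nu \approx_\cA \psi_g(x)$, I must produce $x' \in \pi^{-1}(\pi(x))$ with $\psi_g(x') = \nu$. For each $b \in \nu$, the equivalence \eqref{eq:filter_equivalence} forces $\pi(x) \in \pi(g(b))$, so there is a unique point $x_b \in g(b)$ lying over $\pi(x)$ (uniqueness by injectivity of $\pi$ on $g(b)$); using $g(b \cdot b') = g(b) \cap g(b')$ one checks that $x_b$ is independent of the chosen $b \in \nu$, yielding a single point $x' \coloneqq x_b$ with $\nu \subseteq \psi_g(x')$, whence $\nu = \psi_g(x')$ by maximality of both filters. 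This is where the compactness and injectivity built into the elements of $\ko\pi$, together with the meet-closure of $\nu$, carry the argument.

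It then remains to check that the two composites are identities — namely $\Psi(\psi_g)(a) = \psi_g^{-1}(\widehat a) = g(a)$, and $\psi_{\Psi(\psi)} = \psi$, the latter because $\Psi(\psi)(a) = \psi^{-1}(\widehat a)$ gives $\psi_{\Psi(\psi)}(x) = \{a \mid x \in \psi^{-1}(\widehat a)\} = \psi(x)$ with domains agreeing since $\bigcup_{a}\widehat a = \pf(\cA)$ — and that the bijection is natural in both variables, a routine diagram chase using $(Fh)^{-1}(\widehat a) = \widehat{h(a)}$ and the functoriality of $G$. Equivalently, one may package the same data as a unit $\eta_\cA \from a \mapsto \widehat a$ and a counit $\varepsilon_\pi \from x \mapsto \{U \in \ko\pi \mid x \in U\}$ and verify the two triangle identities directly; I expect \ref{item:Q3} to remain the crux of the argument either way.
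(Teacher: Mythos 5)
Your proposal is correct, and its mathematical core is the same as the paper's; the difference lies in how the adjunction is packaged. The paper uses the unit--counit formulation: it defines $\eta_\cA(a) = \widehat a$ and $\lambda_\pi(x) = \{U \in \ko\pi \mid x \in U\}$, verifies \ref{item:Q1}--\ref{item:Q3} for the single map $\lambda_\pi$, checks naturality of both transformations, and then proves the two triangle identities~\eqref{eq:1}. You instead exhibit the natural bijection $\ralg(\cA, G\pi) \cong \hauset(\pi, F\cA)$; but your transpose $\psi_g$ specialises to the paper's $\lambda_\pi$ at $g = \mathrm{id}_{G\pi}$, and your $\Psi$ has the paper's unit built in ($\Psi(\psi) = G\psi \circ \eta_\cA$), so both proofs rest on the same constructions, the same ingredients (\Cref{lemma:compact}, \Cref{p:2}, \eqref{eq:filter_equivalence}, \eqref{eq:dual_operations}, the representation theorem), and the same crux: the fibrewise-surjectivity check \ref{item:Q3}, where your compactness/injectivity/meet-closure argument for general $g$ is exactly the paper's argument for $\lambda_\pi$. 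What your packaging buys: the triangle identities collapse to the one-line computations $\Psi(\psi_g) = g$ and $\psi_{\Psi(\psi)} = \psi$, and all the filter-theoretic work is concentrated in a single well-definedness verification. What the paper's buys: \ref{item:Q1}--\ref{item:Q3} need only be checked for one concrete map rather than a $g$-indexed family, and the unit and counit are then directly on hand for the sequel (\Cref{p:completion}, \Cref{t:discrete-duality}, and \Cref{r:3} are all phrased in terms of $\eta$ and $\lambda$). One point you should make explicit rather than fold into a citation: for $\Psi(\psi) = G\psi \circ \eta_\cA$ to make sense you need $\widehat a \in \ko{\pi_\cA}$, i.e.\ that $\widehat a$ is compact open and that $\pi_\cA$ is injective on it; openness is by definition, compactness is \Cref{lemma:compact} (a fact about the topology on $\pf(\cA)$ proved in this paper, not part of the representation theorem you cite), and injectivity is \Cref{p:1}\ref{item:4} --- this is precisely the opening verification in the paper's proof.
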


\begin{proof}
We only need to define natural transformations $\eta \from {\rm
Id}_{\ralg} \implies G \circ F$ and $\lambda \from {\rm Id}_{\hauset} \allowbreak\implies
F \circ G$ satisfying
\begin{equation}
F \eta \circ \lambda_F = {\rm Id}_F \qquad \text{and} \qquad G\lambda
\circ \eta_G = {\rm Id}_G.\label{eq:1}
\end{equation}

Let us first define $\eta$.  Let $\cA$ be a difference--restriction
algebra. We first verify that, for each $a \in \cA$, the algebra $(G
\circ F)(\cA)$ contains the subset $\widehat a = \{\mu \in \pf(\cA)
\mid a \in \mu\}$. It is clear that $\widehat a$ is an open subset
of $\pf(\cA)$ and, by \Cref{lemma:compact}, it is also
compact. Moreover, by \Cref{p:1}\ref{item:4}, $\pi_\cA|_{\widehat
a}$ is injective.  Thus we may take $\eta_\cA(a) := \widehat a$,
that is, the representation map of
\cite[\begin{NoHyper}Corollary~5.9\end{NoHyper}]{diff-rest1}, under
the correspondence~\eqref{eq:3} (in particular, $\eta_\cA$ is itself
a representation of~$\cA$).

We now check that $\eta$ is indeed a natural transformation. Let $h
\from \cA \to \algebra B$ be a homomorphism of
difference--restriction algebras. A routine unfolding of the
definitions gives
\[(G\circ F)(h)(\eta_\cA(a)) = \{\mu \mid \mu \in \pf(\algebra B)
\textrel{and} h(a) \in \mu\} = \widehat{h(a)},\] as required.

In order to define $\lambda$, we claim that given a Hausdorff \'etale space
$\pi \from X \twoheadrightarrow X_0$, for each $x \in X$ the collection
\begin{equation}\label{eq:lambda}\lambda_\pi^0(x) \coloneqq
\{U \subseteq X\mid \text{$U$ is compact and open,
$x \in U$, and $\pi$ is injective on $U$}\}\end{equation}
is either empty or is a  maximal filter of $G(\pi) = \ko\pi$. 

Clearly the sets in $\lambda_\pi^0(x)$ are elements of $G(\pi)$. The
set $\lambda_\pi^0(x)$ is upward closed because the order on
$G(\pi)$ is set inclusion and being a neighbourhood of $x$ is
preserved by supersets. Similarly the fact that $\lambda_\pi^0(x)$
is closed under binary meets follows from the fact that binary meets
in $G(\pi)$ are intersections and containing $x$ is preserved by
intersections. To see that $\lambda_\pi^0(x)$ is maximal, assuming
it is nonempty, let $U \in \ko\pi \setminus \lambda_\pi^0(x)$ and
choose any $V \in \lambda_\pi^0(x)$.  Then $x \in V -_\pi U$, hence
$V-_\pi U \in \lambda_\pi^0(x)$. So any filter of $\ko\pi$ extending
$\lambda_\pi^0(x)$ and containing $U$ must contain $(V-_\pi U)
\cdot_\pi U = \emptyset$, and hence is improper.

We thus define $\lambda_\pi(x) = \lambda_\pi^0(x)$ when
$\lambda_\pi^0(x)$ is nonempty and undefined otherwise. We need to
show that $\lambda_\pi$ satisfies the conditions to be a morphism of
$\hauset$ (\Cref{category2}). Consider one of the basic open sets
$\widehat U$, where $U \in G(\pi) = \ko\pi$. Since $U$ is compact
and open, so is $\lambda_\pi^{-1}(\widehat U) = U$. This shows that
$\lambda_\pi$ is continuous, and by \Cref{lemma:compact}, it is also
proper. And the $U$ above is indeed compact, as required. For
\ref{item:Q1}, suppose $\pi(x) = \pi(x')$, and take arbitrary
elements $U$ and $U'$ of the maximal filters $\lambda_\pi(x)$ and
$\lambda_\pi(x')$ respectively. That is, $x \in U$ and $x' \in
U'$. Then $x' \in \pi^{-1}(\pi(U)) \cap U' = U \rest_\pi U'$. By
\Cref{eq:filter_equivalence}, we conclude that $\lambda_\pi(x)
\approx_{\ko\pi} \lambda_\pi(x')$. For \ref{item:Q2}, let $x, x' \in
\dom(\lambda_\pi)$, and suppose $\pi(x) = \pi(x')$ and
$\lambda_\pi(x) = \lambda_\pi(x')$. Then take any $U \in
\lambda_\pi(x)$ and we have $x, x' \in U$. Since $\pi|_U$ is
injective, we must have $x = x'$.  For \ref{item:Q3}, let
$\lambda_\pi(x)$ be defined and suppose $\mu \in \pf(\ko\pi)$
satisfies $\mu \approx_{\ko\pi} \lambda_\pi(x)$. Then by
\eqref{eq:filter_equivalence} we see that for every $U \in \mu$, we
have $\pi(x) \in \pi(U)$. Let $x_U \in U$ be such that $\pi(x) =
\pi(x_U)$. Since $\mu$ is closed under $\cdot_\pi$ (given by
intersections), and $\pi$ is injective on every element of $\ko\pi$,
the elements $x_U, x_V$ must agree for all $U, V \in \ko\pi$. That
is, there is some $x' \in \bigcap \{U \mid U \in \mu\}$ such that
$\pi(x') = \pi(x)$. Then ($\lambda_\pi(x')$ is defined and)
$\lambda_\pi(x') \supseteq \mu$. By maximality, $\lambda_\pi(x') =
\mu$, witnessing the validity of \ref{item:Q3}.

We now check that $\lambda$ is a natural transformation. Let
$\varphi \from (\pi \from X \twoheadrightarrow X_0) \parrow (\rho
\from Y \twoheadrightarrow Y_0)$ be a morphism in $\hauset$. We
first check that $FG\varphi \circ \lambda_\pi$ and $\lambda_\rho
\circ \varphi$ have the same domain. Let $x \in X$ and suppose that
both $\lambda_\pi(x)$ and $FG\varphi\circ \lambda_\pi(x)$ are
defined. This is equivalent to the existence of sets $U \in \ko\pi$
and $V \in \ko\rho$ such that $x \in U$ and $x \in \varphi^{-1}(V)$,
which implies that $\varphi(x)$ is defined and $V \in
\lambda_\rho^0(\varphi(x))$, that is, $\lambda_\rho(\varphi(x))$ is
defined. Conversely, if $\varphi(x)$ and $\lambda_\rho(\varphi(x))$
are defined, then there exists $V \in \ko\rho$ such that $\varphi(x)
\in V$. Since we have already proved that $G$ is a well-defined
functor, we have that $U:= \varphi^{-1}(V) = G\varphi(V)$ belongs to
$\ko\pi$. Then the fact that $U = \varphi^{-1}(V)\in
\lambda_\pi^0(x)$ proves that $\lambda_\pi(x)$ and
$FG\varphi(\lambda_\pi(x))$ are both defined, as required. Finally,
a similar unfolding of the definitions shows that, for $x$ in the
domain of $FG\varphi\circ \lambda_\pi$, we have $FG\varphi\circ
\lambda_\pi (x) = \lambda_\rho \circ \varphi(x)$, thereby proving
that $\lambda$ is a natural transformation.

Finally, we must check that $\eta$ and $\lambda$ do indeed
satisfy~\eqref{eq:1}. For the left-hand side of~\eqref{eq:1}, let
$\cA$ be a difference--restriction algebra. We need to verify that
$F(\eta_\cA) \circ \lambda_{F(\cA)}$ is the identity on $\pi_\cA
\from \pf(\cA) \twoheadrightarrow \pf(\cA)/{\approx_\cA}$. So let
$\xi$ be a maximal filter of $\cA$. Then $\lambda_{F(\cA)}^0(\xi)$
is nonempty, since for every $a \in \xi$, the set $\widehat{a}$ is a
compact open containing $\xi$ on which $\pi_\cA$ is injective. Thus
$\lambda_{F(\cA)}(\xi)$ is defined. Then $F(\eta_\cA)$ maps
$\lambda_{F(\cA)}(\xi)$ to $\{a \in \cA \mid \eta_\cA(a) \in
\lambda_{F(\cA)}(\xi)\}$. Unfolding the definitions shows that
$\eta_\cA(a) \in \lambda_{F(\cA)}(\xi)$ is equivalent to $a \in
\xi$; thus $F(\eta_\cA)$ maps $\lambda_{F(\cA)}(\xi)$ to $\xi$, as
required.

For the right-hand side of~\eqref{eq:1}, let $\pi \from X
\twoheadrightarrow X_0$ be a Hausdorff \'etale space. We need to
verify that $G(\lambda_\pi) \circ \eta_{G(\pi)} $ is the identity on
$\ko\pi$. So let $U \in \ko\pi$. Then $\eta_{G(\pi)}$ maps $U$ to
$\widehat U = \{\mu \in \pf(\ko\pi) \mid U \in \mu\}$. Then
$G(\lambda_\pi)$ maps this to $\lambda_\pi^{-1}(\widehat U) = U$.
\end{proof}


\section{Finitary completion and duality}\label{sec:completion}

In this section we first define the appropriate analogues of having finite joins and the completion by finite joins for algebras with \emph{compatibility} information. Then we show that the monad induced by $F \from \ralg \dashv \hauset^{\operatorname{op}} \,\from\! G$ gives precisely the `completion' of the
algebra by finite compatible joins.

Our contravariant
adjunction is a generalisation of the one between generalised Boolean
algebras\footnote{A \emph{generalised Boolean algebra}
is a `Boolean algebra without a top', that is, a distributive
lattice with a bottom and a relative complement operation, $-$,
validating $a \wedge (b - a) = \bot$ and $a \vee (b - a) = a \vee
b$.} and Hausdorff topological spaces (see Section~\ref{sec:subtraction}). Like that adjunction, ours restricts to a duality: between the full subcategories of algebras that have finite compatible joins and the \emph{locally compact, zero-dimensional} Hausdorff \'etale spaces.

In order to define and better situate our analogues of having finite joins and completion by finite joins, in the presence of compatibility information, we first recall the notion of compatibility from \cite[Definition~4.1]{diff-rest2}.

\begin{definition}
Let $\cP$ be a poset. A binary relation $C$ on $\cP$ is a
\defn{compatibility relation} if it is reflexive, symmetric, and
downward closed in $\cP \times \cP$.  We say that two elements $a_1,
a_2 \in \cP$ are \defn{compatible} if $a_1 C a_2$.
\end{definition}

One can show that `reflexive,
symmetric, and downward closed' is an axiomatisation of
a rather general conception of compatibility in the following sense.

\begin{proposition}[{\cite[Proposition~4.2]{diff-rest2}}] Let $(P, \leq, C)$ be a poset equipped with a
binary relation $C$. Then $(P, \leq, C)$ is isomorphic to a poset
$(P', \subseteq, C')$ of partial functions ordered by inclusion and
equipped with the relation `agree on the intersection of their
domains' if and only if $C$ is reflexive, symmetric, and downward
closed.\footnote{The relation $C'$ was introduced
in~\cite{Vagner1960} for semigroups of partial transformations,
where it was called
\emph{semi-compatibility}. 
}
\end{proposition}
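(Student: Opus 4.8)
The plan is to prove the two implications separately, with essentially all of the work in the sufficiency direction. Necessity is routine and I would dispatch it first: on any collection of partial functions, the relation $C'$ (``agree on the intersection of their domains'') is reflexive because a function agrees with itself, symmetric because the defining condition is symmetric in its two arguments, and downward closed because if $f$ and $g$ agree on $\dom(f)\cap\dom(g)$ and $f'\subseteq f$, $g'\subseteq g$, then on the smaller set $\dom(f')\cap\dom(g')$ the functions $f'$ and $g'$ agree with $f$ and $g$ and hence with each other. Transporting these three properties along the assumed isomorphism shows $C$ is reflexive, symmetric, and downward closed.

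For sufficiency I would build an explicit representation, driven by two elementary consequences of the hypotheses. First, incompatibility is \emph{upward closed}: this is exactly the contrapositive of downward closure of $C$, so $a \not C b$ together with $a \le p$ and $b \le q$ imply $p \not C q$. Second, an incompatible pair has \emph{no common upper bound}: if $a,b \le p$ then $p\,C\,p$ (reflexivity) and downward closure force $a\,C\,b$; in particular $a \not C b$ forces $a \neq b$. I then take the base set to be a disjoint union $X = X_1 \sqcup X_2$, where $X_1 = \{e_r \mid r\in P\}$ records the order and $X_2 = \{w_{\{a,b\}} \mid a \not C b\}$ records incompatibility, with all values taken in $P$. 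For $p\in P$ I define the partial function $\theta(p)$ by $\theta(p)(e_r) = r$ whenever $r \le p$ (undefined otherwise), and $\theta(p)(w_{\{a,b\}}) = a$ if $p \ge a$, $= b$ if $p \ge b$, and undefined otherwise.

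The crux is checking that $\theta$ is well defined and an isomorphism onto its image. Single-valuedness on $X_2$ is precisely where ``no common upper bound'' is used: the clauses $p \ge a$ and $p \ge b$ are mutually exclusive and $a \ne b$, so at most one value is assigned at each $w_{\{a,b\}}$. The component $X_1$ recovers the order: since the value at $e_r$ depends only on $r$, one has $\theta(p) \subseteq \theta(q) \Rightarrow (r\le p \Rightarrow r \le q)$, and taking $r=p$ gives $p\le q$; conversely $p \le q$ yields $\theta(p) \subseteq \theta(q)$ on $X_1$ trivially and on $X_2$ because a value $a$ at $w_{\{a,b\}}$ can only persist (it never flips to $b$, again by ``no common upper bound''). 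The component $X_2$ recovers compatibility: $\theta(p)$ and $\theta(q)$ always agree on $X_1$, and they disagree at $w_{\{a,b\}}$ exactly when one of $p,q$ dominates $a$ and the other dominates $b$; by upward closure of incompatibility this occurs for some incompatible $\{a,b\}$ if and only if $p \not C q$, with the witness $w_{\{p,q\}}$ supplying the ``only if'' and upward closure the ``if''. Hence $\theta$ is an order-embedding satisfying $\theta(p)\,C'\,\theta(q) \iff p\,C\,q$, i.e. an isomorphism onto $(\theta[P], \subseteq, C')$.

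I expect the main obstacle to be the simultaneous demand that the witness assignment be a genuine single-valued partial function and yet detect \emph{every} incompatibility. Both halves of this tension are resolved by the two bookkeeping observations above: ``no common upper bound'' guarantees well-definedness (and preservation of inclusions), while ``upward closure of incompatibility'' guarantees that the witnesses $w_{\{a,b\}}$ see all and only the incompatible pairs.
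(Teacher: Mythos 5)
Your proposal is correct, but one caveat about comparison: this paper does not prove the proposition itself --- it imports it verbatim from the cited reference \cite[Proposition~4.2]{diff-rest2} --- so there is no in-paper argument to measure yours against, and I assess it on its own merits. The necessity half is indeed routine and you dispatch it correctly. For sufficiency, the two auxiliary facts you isolate are exactly the right ones: upward closure of incompatibility (the contrapositive of downward closure of $C$), and the fact that incompatible elements have no common upper bound (reflexivity plus downward closure), which in particular forces $a \neq b$ whenever $a$ and $b$ are incompatible. The second fact does double duty, guaranteeing both that $\theta(p)$ is single-valued at each witness $w_{\{a,b\}}$ and that the value there cannot flip from $a$ to $b$ as $p$ increases, so $\theta$ preserves inclusion; the points $e_p$ make $\theta$ order-reflecting (evaluate at $e_p$ itself); the witness $w_{\{p,q\}}$ shows $\theta$ reflects incompatibility; and upward closure of incompatibility shows that compatible $p, q$ can never disagree at any $w_{\{a,b\}}$, so $\theta$ preserves compatibility. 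All four verifications go through, and $\theta$ is an isomorphism onto its image equipped with inclusion and agreement-on-common-domain. This ``one point per element to record the order, one point per incompatible pair to record incompatibility'' construction is essentially forced: partial functions built from order data alone (restrictions of a single map on downsets) are automatically pairwise compatible, so any representation must adjoin witnesses of incompatibility, and yours does so in the minimal way. The only cosmetic remark is that your functions take values in $P$ while their arguments range over $X$; as the paper notes, one can always pass to the common base $X \cup P$ to view them as partial functions on a single set.
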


\begin{definition}\label{def:comp} A poset $\cP$ equipped with a
compatibility relation is said to have \defn{finite compatible
joins} provided it has joins of each finite set of
pairwise-compatible elements. When that is the case, we say that
$\cP$ is \defn{finitarily compatibly complete}.
\end{definition}

\begin{definition}\label{algebra_compatibility}When speaking about compatibility for difference--restriction algebras, we mean the relation that makes two
elements compatible precisely when
\[ a_1 \rest a_2 = a_2 \rest a_1.\]
\end{definition}

It is clear that for an actual $\{-, \rest\}$-algebra of partial functions, two elements are compatible
exactly when they agree on their shared domain. This is the easiest way to see that \Cref{algebra_compatibility} does indeed define a compatibility relation.

It is also worth noting that for an actual $\{-, \rest\}$-algebra of
partial functions, if a finite join $f_1 \bjoin \dots \bjoin f_n$
exists, it is necessarily the union $f_1 \cup \dots \cup f_n$. Indeed, if $f$ is an upper bound of $\{f_1, \dots, f_n\}$, then $f_1 \cup \dots \cup f_n = f-(f - f_1 - \dots - f_n)$.\footnote{We follow the usual convention that $-$ is left associative.}

\begin{lemma}\label{l:4}
For difference--restriction algebras, having finite compatible joins is equivalent to having joins of each pair of compatible elements.
\end{lemma}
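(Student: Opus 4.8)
The plan is to dispose of the forward implication immediately---a join of a pair of compatible elements is a join of a finite (two-element) pairwise-compatible set---and to concentrate on the converse: from binary compatible joins I want to produce joins of arbitrary finite pairwise-compatible sets. I would argue by induction on the cardinality $n$ of the set, the natural inductive step being to split off one element: given pairwise-compatible $a_1, \dots, a_n$, form $b := \join_{i<n} a_i$ by the induction hypothesis (the $a_1, \dots, a_{n-1}$ being again pairwise compatible), and then adjoin $a_n$. Two routine facts grease this. First, every difference--restriction algebra has a least element $0 = a - a$ (it is sent to $\emptyset$ by any representation), so the empty and singleton cases of the induction are immediate. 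Second, whenever $\join S$ and $(\join S) \bjoin a$ both exist, the latter is automatically $\join(S \cup \{a\})$, because the upper bounds of $S \cup \{a\}$ are exactly the common upper bounds of $\join S$ and $a$.

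The crux---and the one genuinely non-formal point---is the compatibility bookkeeping needed to invoke the binary-join hypothesis in the inductive step: to form $b \bjoin a_n$ I must first know that $b$ is compatible with $a_n$. So the real content is the sublemma that compatibility with a fixed element is preserved under taking (existing) binary compatible joins: if $c$ is compatible with each of $a$ and $a'$ and $a \bjoin a'$ exists, then $c$ is compatible with $a \bjoin a'$. I would strengthen the induction to carry this information, proving simultaneously that (i) $\join_{i=1}^n a_i$ exists and (ii) any $c$ compatible with every $a_i$ is compatible with $\join_{i=1}^n a_i$. Clause (ii) at stage $n-1$, applied with $c = a_n$, supplies exactly the compatibility of $b$ with $a_n$ that the existence half of stage $n$ needs, while the new instance of (ii) at stage $n$ follows by combining clause (ii) at stage $n-1$ with the sublemma.

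To prove the sublemma I would pass to a representation, that is, an isomorphism $\theta$ of $\cA$ onto an algebra $\cB$ of partial functions. Since $\theta$ is an isomorphism of $\{-, \rest\}$-algebras it is an order isomorphism, hence preserves the existing join, and by the remark preceding the lemma an existing binary join in an algebra of partial functions is the set-theoretic union; thus $\theta(a \bjoin a') = \theta(a) \cup \theta(a')$. Moreover, being defined by the equation $x \rest y = y \rest x$, compatibility transfers across $\theta$ and there coincides with agreeing on the shared domain. The verification is then the obvious one: a point in the common domain of $\theta(a) \cup \theta(a')$ and $\theta(c)$ lies in $\dom(\theta(a))$ or in $\dom(\theta(a'))$, where the value of $\theta(a) \cup \theta(a')$ is that of $\theta(a)$ or of $\theta(a')$ respectively, and this agrees with $\theta(c)$ by the two given compatibilities.

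I expect the main obstacle to be organisational rather than mathematical: arranging the induction so that the compatibility clause (ii) is available precisely where the existence clause (i) needs it, and being careful that the abstract join $a \bjoin a'$ genuinely corresponds to the concrete union $\theta(a) \cup \theta(a')$---which is where the order-isomorphism property of representations and the cited remark do the real work. Once the sublemma is secured via the representation, everything else is formal.
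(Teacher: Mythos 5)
Your proposal is correct and follows essentially the same route as the paper: the forward direction is dismissed as trivial, and the converse is an induction on the size of the pairwise-compatible set whose crucial ingredient is exactly your sublemma---that compatibility with a fixed element is preserved by existing binary compatible joins---which the paper states as ``if $\{a_1, a_2, a_3\}$ is pairwise compatible, then $\{a_1, a_2 \bjoin a_3\}$ is pairwise compatible'' and leaves as ``easily checked''. Your representation-based verification of that sublemma and the strengthened induction hypothesis are just a more explicit write-up of the same argument.
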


\begin{proof}
Let $\cA$ be a difference--restriction algebra. Clearly if $\cA$ has finite compatible joins, then is has joins of every pair of compatible elements. Conversely, $\cA$ always contains $0$, the join of $\emptyset$, and if $\cA$ has joins of every compatible pair, then by induction it has joins of every nonempty pairwise-compatible set. The induction relies crucially on the fact that if $\{a_1, a_2, a_3\}$ is pairwise compatible, then $\{a_1, a_2 \bjoin a_3\}$ is pairwise compatible, which is easily checked.\footnote{Thus the lemma does not hold for posets in general. Consider, for example, the poset of all subsets of cardinality at most two (of some set with at least three elements), ordered by inclusion and with compatibility relation $X \mathrel C Y \iff |X \cup Y| \leq 2$.}
\end{proof}

Note that in the case that all pairs of elements are compatible,
having \emph{binary compatible joins} (i.e.~a join of each compatible
pair) is equivalent to having all \emph{binary joins}. Boolean
algebras provide examples of difference--restriction algebras where
every pair is compatible, if $-$ is the Boolean relative complement and we use
$\rest$ as the meet symbol. In the same way, generalised Boolean
algebras provide a more general class of examples. Thus `having binary
compatible joins' is a coherent generalisation of `having binary
joins' from situations where there is no compatibility information, to
those where there is.

Next we define what a finitary compatible completion of a difference--restriction algebra is. Finitary compatible completions will be unique up to isomorphism (\Cref{p:1_0}).

We first need the following definition.

\begin{definition}
Let $\cP$ be a poset. A subset $S$ of $\cP$ is \defn{finite-join
dense} in $\cP$ if every element of $\cP$ is the join of some
finite subset of $S$.

\end{definition}

From now on, we focus exclusively on our specific case of interest: difference--restriction algebras.

\begin{definition}\label{def:completion}
A \defn{finitary compatible completion} of a difference--restriction
algebra $\cA$ is an embedding $\iota \from \algebra A
\hookrightarrow \algebra C$ of $\{-, \rest\}$-algebras such that
$\cC$ is a finitarily compatibly complete difference--restriction
algebra and $\iota[\cA]$ is finite-join dense in~$\cC$.
\end{definition}

The following technical results will be used, in particular, for the
proof that finitary compatible completions are unique up to unique
isomorphism.

\begin{lemma}\label{l:distributive}
Let $\cA$ be a 
difference--restriction algebra and $S, T \subseteq \cA$ two finite
subsets of pairwise-compatible elements. \begin{enumerate}[(a)]
\item\label{item:aa}
The set
$S \rest T$ consists of pairwise-compatible elements and, if $\cA$
is finitarily compatibly complete, then
\[\join S \rest \join T = \join (S \rest T).\]

\item\label{item:bb} Suppose $S$ is nonempty and the join $\join S$
exists. Then for every $a \in \cA$ we have
\[\meet(a-S) = a-\join S.\]
\end{enumerate}
\end{lemma}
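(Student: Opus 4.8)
The plan is to dispatch the compatibility claim of (a) and part (b) first, since they are logically independent, and then to bootstrap the distributive identity of (a) out of (b) via a join-preservation property of the canonical representation.

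For the compatibility claim of (a), I would argue through a representation $\theta$ of $\cA$ by partial functions. Compatibility is the equation $x\rest y = y\rest x$, which $\theta$ both preserves and reflects, so it suffices to check it in the image. For $s,s'\in S$ and $t,t'\in T$, the partial functions $\theta(s)\rest\theta(t)$ and $\theta(s')\rest\theta(t')$ take their values from $\theta(t)$ and $\theta(t')$ respectively on their common domain; since $t,t'\in T$ are compatible, $\theta(t)$ and $\theta(t')$ agree wherever both are defined, so the two restricted functions agree on their shared domain and are therefore compatible. Reflecting back along $\theta$ shows $s\rest t$ and $s'\rest t'$ are compatible in $\cA$, so $S\rest T$ is a finite set of pairwise-compatible elements and $\join(S\rest T)$ exists whenever $\cA$ is finitarily compatibly complete.

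For part (b), set $m:=\meet(a-S)$. As $-$ is order-reversing in its second argument and $s\le\join S$ for every $s\in S$, each $a-\join S\le a-s$, so $a-\join S$ is a lower bound of $a-S$ and thus $a-\join S\le m$. For the reverse inequality, note $m\le a$ and $m\cdot s=0$ for every $s\in S$ (read off from $m\le a-s$). Put $k:=m\cdot\join S$. Inside the downset $(\join S)^\downarrow$, which is a Boolean algebra by \cite{schein1992difference}, the element $\join S$ is the top and equals $\bigvee_{s\in S}s$; finite distributivity then gives $k=k\cdot\join S=\bigvee_{s\in S}(k\cdot s)=0$, since $k\cdot s\le m\cdot s=0$. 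Hence $m\cdot\join S=0$, and with $m\le a$ this yields $m\le a-\join S$, completing (b). Nonemptiness of $S$ is used both for $\meet(a-S)$ to be a genuine finite meet and for $\bigvee_{s\in S}s=\join S$.

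The distributive identity of (a) is the substantive part, and the main obstacle is that an arbitrary representation need not send the abstract join $\join S$ to the union of the images of $S$: a least upper bound computed inside a subalgebra can strictly exceed the set-theoretic union, so the easy set-theoretic identity for restriction and unions cannot simply be transferred. The repair uses (b). I would first show that for any finite $K\subseteq\cA$ whose join exists, the representation $a\mapsto\widehat a$ of \cite[Corollary~5.9]{diff-rest1} sends $\join K$ to $\bigcup_{k\in K}\widehat k$: the inclusion $\supseteq$ is immediate from upward closure of filters, and for $\subseteq$, if $\join K\in\mu$ for a maximal filter $\mu$ but no $k\in K$ belonged to $\mu$, then $(\join K)\cdot k=k\notin\mu$ forces $(\join K)-k\in\mu$ for each $k$ by \Cref{p:2}; taking the meet and applying (b) gives $\meet((\join K)-K)=(\join K)-\join K=0\in\mu$, contradicting properness of $\mu$. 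Thus $a\mapsto\widehat a$ carries joins (in particular compatible ones) to unions. With this in hand the identity transfers cleanly: under the section correspondence of \Cref{l:2} the operations on the $\widehat a$ are relative complement and domain restriction, for which $\big(\bigcup_i\widehat{s_i}\big)\rest\big(\bigcup_j\widehat{t_j}\big)=\bigcup_{i,j}(\widehat{s_i}\rest\widehat{t_j})$ is a direct check from \eqref{eq:dual_operations}. Applying the representation and join-preservation to $S$, to $T$, and to the pairwise-compatible set $S\rest T$, both $\widehat{\join S\rest\join T}$ and $\widehat{\join(S\rest T)}$ equal $\bigcup_{i,j}\widehat{s_i\rest t_j}$; injectivity of the representation gives $\join S\rest\join T=\join(S\rest T)$.
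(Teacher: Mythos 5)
Your proof is correct, and it follows a genuinely different route from the paper's. The paper gives no self-contained argument at all: it notes that the lemma is essentially a special case of \cite[Lemma~4.9]{diff-rest2}, where $S$ and $T$ are allowed to be infinite but the algebra is assumed compatibly complete, and remarks that for finite $S$ and $T$ this stronger completeness hypothesis is not needed in that proof. You instead prove everything from ingredients already available in the present paper: the compatibility claim of (a) is verified semantically through an arbitrary representation (correctly observing that only compatibility of the second arguments matters); (b) is a purely order-algebraic computation inside the Boolean downset $(\join S)^{\downarrow}$ of \cite{schein1992difference}, correctly identifying $\join S$ as the join computed inside that downset and using finite distributivity there; and the identity in (a) is transferred along the canonical representation $a \mapsto \widehat a$ of \cite{diff-rest1}, after you establish---via \Cref{p:2} and part (b)---that this representation sends existing finite joins to unions. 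I see no gap, and in particular no circularity: \Cref{l:2}, \Cref{p:2}, the representation theorem, and the Boolean structure of downsets all precede this lemma, and your join-preservation claim also covers the degenerate case $K=\emptyset$ trivially because $\widehat{0}=\emptyset$ (which is what is needed when $S$ or $T$ is empty). As for what each route buys: the paper's citation is economical and reuses the more general argument of \cite{diff-rest2}, at the price of asking the reader to check that the completeness hypothesis can be dropped in the finite case; your argument is verifiable entirely within this paper, and your auxiliary fact $\widehat{\join K}=\bigcup_{k\in K}\widehat{k}$ is independently useful---it is essentially the finite-join-to-union principle behind \Cref{l:6} and the finite-join density in \Cref{p:completion}---though it makes part (a) rest on the full maximal-filter machinery where a direct algebraic calculation would also suffice.
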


\begin{proof}
The lemma is essentially a special case of
\cite[Lemma~4.9]{diff-rest2} (where $S$ and $T$ are not restricted
to be finite). 
The stronger hypothesis on $\cA$ in
\cite[Lemma~4.9]{diff-rest2}---that it is compatibly complete---is
not needed for the proof when $S$ and $T$ are finite.
\end{proof}

\begin{lemma}\label{l:6}
Let $\cA$ and $\cB$ be subtraction algebras and $h: \cA
\to \cB$ be a homomorphism. Given $S \subseteq \cA$, if both
$\join_\cA S$ and $\join_\cB h[S]$ exist, then the equality
$h(\join_\cA S) = \join_\cB h[S]$ holds.
\end{lemma}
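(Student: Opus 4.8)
Write $s_0 := \join_\cA S$ and $t_0 := \join_\cB h[S]$. The plan is to establish the two inequalities $t_0 \le h(s_0)$ and $h(s_0) \le t_0$ separately. For the first I would begin by recording that $h$ is monotone: since $h$ preserves $-$, it preserves the derived operation $\bmeet = a - (a - b)$, and because $a \le b \iff a \bmeet b = a$, this makes $h$ order-preserving. Consequently $h(s_0)$ is an upper bound of $h[S]$ in $\cB$, and minimality of the join $t_0 = \join_\cB h[S]$ gives $t_0 \le h(s_0)$ at once. This direction uses nothing about $s_0$ being a join.

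The real content of the lemma is the reverse inequality $h(s_0) \le t_0$. Here I would first note that $h$ preserves \emph{existing finite} joins: if $a \bjoin b$ exists, then fixing a common upper bound $c$ one has $a \bjoin b = c - ((c - a) \bmeet (c - b))$ inside the Boolean algebra $c^{\downarrow}$, and applying the $-$- and $\bmeet$-preserving map $h$ turns this into the analogous expression witnessing $h(a) \bjoin h(b) = h(a \bjoin b)$ in $h(c)^{\downarrow}$ (and a binary join computed in a downset agrees with the join taken in the whole algebra). Since $t_0 \le h(s_0)$, both $t_0$ and every $h(s)$ lie in the Boolean algebra $h(s_0)^{\downarrow}$, so $h(s_0) \le t_0$ is equivalent to the vanishing of the complement $h(s_0) - t_0$ there; moreover each $h(s) \le t_0$ forces $(h(s_0) - t_0) \bmeet h(s) = 0$.

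The decisive step, and the one I expect to be the main obstacle, is to convert the defining minimality of $s_0 = \join_\cA S$, which lives in $\cA$, into information about $t_0$, which a priori lives only in $\cB$ and need not belong to the image of $h$. The plan is to produce an element $u \in \cA$ that is an upper bound of $S$ and satisfies $h(u) \le t_0$; granting this, minimality of $s_0$ yields $s_0 \le u$, whence $h(s_0) \le h(u) \le t_0$ and we are done. To build such a $u$ one expresses $t_0$ through finitely many values $h(a_1), \dots, h(a_n)$ and sets $u := a_1 \bjoin \dots \bjoin a_n$, so that $h(u) = h(a_1) \bjoin \dots \bjoin h(a_n)$ by the previous paragraph, while order-reflection of $h$ upgrades $h(u) \ge h(s)$ to $u \ge s$ for each $s \in S$. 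Making rigorous this passage from the (possibly infinite) family $S$ to finitely many images, and verifying $h(u) \le t_0$, is exactly where the interaction between $S$ and the finitary join structure of the two algebras must be controlled; this is the point at which the properties of $h$ carry the argument, and where I would concentrate the effort.
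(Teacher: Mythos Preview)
Your first inequality $t_0 \le h(s_0)$ via monotonicity is fine and matches the paper. The trouble is entirely in your plan for $h(s_0) \le t_0$.

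The strategy of producing $u \in \cA$ with $S \le u$ and $h(u) \le t_0$ rests on three unsupported steps. First, there is no reason $t_0 = \join_\cB h[S]$ should be expressible through \emph{finitely many} values $h(a_1),\dots,h(a_n)$: the set $S$ may be infinite and the statement imposes no finite-join-density hypothesis. Second, even granting such $a_i$, the join $a_1 \bjoin \cdots \bjoin a_n$ need not exist in the subtraction algebra~$\cA$. Third, and most seriously, your appeal to ``order-reflection of $h$'' is simply false: a homomorphism of subtraction algebras need not be injective, and the constant map to the one-element algebra is a homomorphism that reflects no order at all. So the passage from $h(u) \ge h(s)$ to $u \ge s$ is illegitimate, and the plan collapses at the point you yourself flag as the main obstacle.

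The paper avoids all of this by a direct computation. Writing $a = \join_\cA S$, one shows $h(a) - t_0 = 0$ (which, in the Boolean algebra $h(a)^\downarrow$, is equivalent to $h(a) \le t_0$). The key identity is \Cref{l:distributive}\ref{item:bb}: whenever $\join S$ exists, $a - \join S = \meet(a - S)$. Applying it once in $\cB$ and once in $\cA$, and using that $h$ preserves $-$ and $\bmeet$, gives
\[
h(a) - \join_\cB h[S] \;=\; \meet_\cB\{h(a) - h(s)\mid s \in S\} \;=\; h\bigl(\meet_\cA\{a - s \mid s \in S\}\bigr) \;=\; h(a - \join_\cA S) \;=\; h(0) \;=\; 0.
\]
No finiteness, no order-reflection, no auxiliary $u$ is needed; the argument works uniformly for arbitrary $S$.
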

\begin{proof}
Since $h$ is monotone, it is clear that $h(\join_\cA S) \geq \join_\cB
h[S]$. Conversely, it suffices to show that $h(\join_\cA S) -
\join_\cB h[S] = 0$. For the sake of readability, we set $a =
\join_\cA S$. By
\Cref{l:distributive}\ref{item:bb}, and because $h$ is a
homomorphism, we have
\begin{align*}
h(\join_\cA S) - \join_\cB h[S]
&= h(a) - \join_\cB h[S]
\\ & = \meet_\cB \{h(a) - h(s) \mid s \in S\}
\\ & = h(\meet_\cA \{a - s \mid s \in S\})
\\ & = h(a - \join_\cA S) = h(0) = 0.\popQED\qed
\end{align*}
\end{proof}

\begin{lemma}\label{lemma:key}
Let $\iota \from \algebra A \hookrightarrow \algebra B$ and $\iota'
\from \algebra A \hookrightarrow \algebra C$ be embeddings of
difference--restriction algebras, and suppose that $\iota[\algebra
A]$ is finite-join dense in $\algebra B$, and that $\algebra C$ is
finitarily compatibly complete. Then the map $\theta \from \algebra
B \to \algebra C$ that sends $b = \join \iota(a_i)$, where $a_1,
\dots, a_n \in \cA$, to $\join \iota'(a_i)$ is a well-defined
embedding of $\{-, \rest\}$-algebras.
\end{lemma}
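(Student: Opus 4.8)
The plan is to route everything through the order-theoretic characterisation $x \le y \iff x - y = 0$ (valid in the subtraction-algebra reduct) together with the distributivity identities of \Cref{l:distributive}, transporting statements between $\cB$ and $\cC$ along the two embeddings. First I would check that the target joins exist, so that $\theta$ even makes sense. Fixing a representation $b = \join_i \iota(a_i)$, each $\iota(a_i)$ lies below $b$, and since the compatibility relation of \Cref{algebra_compatibility} is reflexive and downward closed, elements beneath a common element are pairwise compatible; hence the $\iota(a_i)$ are pairwise compatible in $\cB$. Compatibility is expressed purely by the equation $a_i \rest a_j = a_j \rest a_i$, so it is preserved by the homomorphism $\iota'$ and reflected by the embedding $\iota$; consequently the $a_i$ are pairwise compatible in $\cA$ and the $\iota'(a_i)$ are pairwise compatible in $\cC$. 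As $\cC$ is finitarily compatibly complete (\Cref{def:comp}), the join $\join_i \iota'(a_i)$ exists, so each putative value of $\theta$ is defined.

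Next I would prove simultaneously that $\theta$ is well defined, injective, and an order embedding. The key is the chain, valid because $\iota$ is an injective homomorphism fixing $0$ and by \Cref{l:distributive}\ref{item:bb}: for $b = \join_i \iota(a_i)$ and $b' = \join_j \iota(a'_j)$ with nonempty index sets,
\[
b \le b' \iff \forall i\ \iota(a_i) - b' = 0 \iff \forall i\ \meet_j\bigl(\iota(a_i) - \iota(a'_j)\bigr) = 0 \iff \forall i\ \meet_j (a_i - a'_j) = 0.
\]
The right-hand condition is internal to $\cA$, and running the identical chain with $\iota'$ in $\cC$ shows it is equally equivalent to $\join_i \iota'(a_i) \le \join_j \iota'(a'_j)$. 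Thus $b \le b'$ in $\cB$ if and only if $\theta(b) \le \theta(b')$ in $\cC$; applying this in both directions yields well-definedness ($b = b' \Rightarrow \theta(b) = \theta(b')$) together with injectivity and the order-embedding property at once (the cases of empty families, i.e.~the bottom element, being handled directly).

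Finally I would verify that $\theta$ preserves $-$ and $\rest$. For this I would first record the distributivity of $-$ and $\rest$ over existing finite joins: in any difference--restriction algebra, if $\join_i u_i$ exists then $(\join_i u_i) - w = \join_i (u_i - w)$, $(\join_i u_i)\rest w = \join_i (u_i \rest w)$, and $w \rest \join_i u_i = \join_i (w \rest u_i)$, all the displayed joins existing. These are immediate in a representing algebra of partial functions, where an existing finite join is the union and the operations are set-theoretic. Granting these, for $b = \join_i \iota(a_i)$ and $b' = \join_j \iota(a'_j)$ I would compute in $\cB$, using also \Cref{l:distributive}\ref{item:bb} and that $\iota$ is a homomorphism,
\[
b - b' = \join_i \bigl(\iota(a_i) - b'\bigr) = \join_i \iota\Bigl(\meet_j (a_i - a'_j)\Bigr),
\]
exhibiting $b - b'$ as a finite join of $\iota$-images; applying $\theta$, invoking that $\iota'$ is a homomorphism, and then reversing the same manipulations in $\cC$ gives $\theta(b - b') = \theta(b) - \theta(b')$. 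An entirely analogous calculation, using the $\rest$-distributivity above together with $\iota(a_i \rest a'_j) = \iota(a_i)\rest\iota(a'_j)$, produces $b \rest b' = \join_{i,j} \iota(a_i \rest a'_j)$ and hence $\theta(b \rest b') = \theta(b)\rest\theta(b')$. I expect the main obstacle to be precisely the well-definedness step: everything hinges on the value $\join_i \iota'(a_i)$ depending only on $b$ and not on the chosen finite representation, and the device that makes this work is reducing both equality and order in $\cB$ and in $\cC$ to the single representation-independent condition $\meet_j (a_i - a'_j) = 0$ inside $\cA$.
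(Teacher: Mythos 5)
Your proof is correct, and its engine is the same as the paper's: \Cref{l:distributive}\ref{item:bb} together with the fact that $\iota$ and $\iota'$ are embeddings is used to reduce statements about joins in $\cB$ and $\cC$ to the representation-independent condition $\meet_j(a_i - a'_j) = 0$ inside $\cA$ (the paper's iterated difference $a_i - d_1 - \dots - d_m = 0$ is this same condition), and your computations for preservation of $-$ and $\rest$ parallel the paper's displays. Three organisational differences are worth recording. First, you explicitly justify that the joins $\join_i \iota'(a_i)$ exist in $\cC$, by observing that elements below a common bound are pairwise compatible and that compatibility, being equational, is preserved and reflected by embeddings; the paper's proof uses these joins without comment, so you fill a genuine (small) gap. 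Second, you obtain well-definedness, injectivity, and order-reflection simultaneously from the biconditional $b \le b' \iff \theta(b) \le \theta(b')$, whereas the paper proves well-definedness one direction at a time and establishes injectivity separately via $\theta^{-1}(0) = \{0\}$; the arguments are interchangeable, yours being marginally more economical. Third, for $\rest$ you use distributivity of $\rest$ over \emph{existing} finite joins (a property of partial functions), whereas the paper invokes \Cref{l:distributive}\ref{item:aa}, whose stated hypothesis of finitary compatible completeness $\cB$ is not assumed to satisfy, so your variant is slightly cleaner on that point; you also bypass the paper's intermediate claim that $\theta$ preserves~$\cdot$.
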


\begin{proof}
We will treat $\cA$ as a subalgebra of $\algebra B$ and write
$\iota'(a)$ as $a'$. First we check $\theta$ is well-defined. So
suppose $\join_1^n a_i = \join_1^m d_j$ (with the joins in $\algebra
B$ and the $a_i$'s, $d_j$'s in $\cA$). Then for each $a_i$ we have
$a_i - d_1 - \dots - d_m = 0$, and thus, as $\iota'$ is a
homomorphism, $a_i' - d_1' - \dots - d_m' = 0'$ (which is the least
element of $\algebra C$). It follows that $a_i' \leq \join
d_j'$. Since this holds for each $i$ we have $\join a_i' \leq \join
d_j'$. By symmetry, we have the reverse inequality also. Thus $\join
a_i' = \join d_j'$.

Now we show that $\theta$ preserves $-$. We first claim that in difference--restriction algebras, when $\join_1^n a_i$ and $\join_1^m b_j$ exist, the equation $\join_1^n a_i \cdot \join_1^m b_j = \join_1^n\join_1^m a_i \cdot b_j$ holds. Since finite joins are given by unions in $\{-,\rest\}$-algebras of partial functions, the equation $(a_1 \bjoin a_2) \cdot b = (a_1 \cdot b) \bjoin (a_2 \cdot b)$ is valid, and then the claim follows by two inductions. Having established the claim, it is straightforward to verify that $\theta$ preserves the $\cdot$ operation. Now let $b_1, b_2 \in \algebra B$ with $b_1 = \join_i a_i$ and $b_2 = \join_j d_j$ for $a_i$'s and $d_j$'s in $\cA$.  We then calculate 
\begin{align*}
\theta(b_1 - b_2)
&= \theta(\join_i a_i - \join_j d_j)  &&\text{by the hypotheses for $b_1$ and $b_2$} \\
&= \theta(\join_i(a_i - \join_j d_j)) &&\text{by a property of partial functions}\\
&= \theta(\join_i\meet_j(a_i - d_j)) &&\text{by \Cref{l:distributive}\ref{item:bb}}\\
&= \join_i\theta(\meet_j(a_i - d_j)) &&\text{by the definition of $\theta$, since $\meet_j(a_i - d_j) \in \cA$}\\
&= \join_i\meet_j\theta(a_i - d_j) &&\text{since $\theta$ preserves $\cdot$}\\
&=\join_i\meet_j(a'_i - d'_j) &&\text{since $a_i, b_j \in \cA$ and $\iota'$ is a homomorphism}\\
&= \join_i a'_i - \join_j d'_j &&\text{working in reverse}\\
&= \theta(b_1) - \theta(b_2)&&\text{by the definition of $\theta$.}
\end{align*}

Let us show that $\theta$ preserves $\rest$.  We now treat $\algebra
A$ as a subset of both $\algebra B$ and $\algebra C$, conflating
$\algebra A$ with its images under $\iota$ and $\iota'$. For $b_1,
b_2 \in \algebra B$ take finite subsets $A_1$ and $A_2$ of $\cA$ with $b_1 = \join_{\algebra B} A_1$ and $b_2 = \join_{\algebra B} A_2$. We
calculate
\begin{align*}
\theta(b_1) \rest \theta(b_2)
&= \join_{\algebra C} A_1 \rest \join_{\algebra C} A_2 &&\text{by the definition of $\theta$} \\
&= \join_{\algebra C}(A_1 \rest A_2) &&\text{by \Cref{l:distributive}\ref{item:aa}}\\
&=\theta(\join_{\algebra B}(A_1 \rest A_2))&&\text{by the definition of $\theta$}\\
&=\theta(\join_{\algebra B} A_1 \rest \join_{\algebra B} A_2)&&\text{by \Cref{l:distributive}\ref{item:aa}}\\ 
&= \theta(b_1 \rest b_2)&&\text{by the hypotheses for $A_1$ and $A_2$.}
\end{align*}

It remains to show that $\theta$ is injective. We claim that since
$\theta$ is a homomorphism, injectivity of $\theta$ amounts to having
$\theta^{-1}(0) = \{0\}$. Suppose $\theta^{-1}(0) = \{0\}$. Then
given $a, b \in \algebra A$ we have $\theta(a) = \theta(b) \implies
\theta(a-b) = 0 = \theta(b-a)$, and hence $a-b = 0 = b-a$. By an
elementary property of sets (in particular, partial functions), this implies $a = b$,
proving the claim.
Now to show $\theta^{-1}(0) = \{0\}$, suppose   $\theta(\join a_i) =
0$. By the definition of $\theta$, we have $\join a_i' = 0$, and thus $a_i' = 0$ for each $i$. Since $\iota'$ is injective, this gives $a_i = 0$ for each $i$. Thus $\join a_i = 0$. 
\end{proof}

\begin{remark} The embeddings of the lemma above admit the following topological interpretation via the duality we are going to establish in Proposition \ref{t:discrete-duality}.
We assume that every element of $\algebra C$ is a finite join of elements of $\iota'(\algebra A)$. Then $\algebra A$, $\algebra B$ and $\algebra C$ have the same \'etale space of maximal filters. The algebra $\algebra C$ is reconstructed from this \'etale space by means of the duality of Proposition  \ref{t:discrete-duality}  and is isomorphic to the algebra $\algebra C'$  consisting of all compact open local sections of this \'etale space, whereas $\algebra A$ and $\algebra B$ are isomorphic to its subalgebras $\algebra A'$ and $\algebra B'$, respectively, and we have $\algebra A' \subseteq \algebra B'\subseteq \algebra C'$.
\end{remark}

Now we can show that finitary compatible completions are unique up to
unique isomorphism.

\begin{proposition}\label{p:1_0}
If $\iota \from \algebra A \hookrightarrow \algebra C$ and $\iota'
\from \algebra A \hookrightarrow \algebra C'$ are finitary compatible
completions of the difference--restriction algebra $\cA$ then
there is a unique isomorphism $\theta \from \algebra C \to \algebra
C'$ satisfying the condition $\theta \circ \iota = \iota'$.
\end{proposition}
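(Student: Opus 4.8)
The plan is to apply Lemma~\ref{lemma:key} twice, in both directions, and then verify that the two resulting embeddings are mutually inverse. Since both $\iota \from \cA \hookrightarrow \cC$ and $\iota' \from \cA \hookrightarrow \cC'$ are finitary compatible completions, each satisfies the hypotheses of Lemma~\ref{lemma:key}: the image of $\cA$ is finite-join dense in the target, and the target is finitarily compatibly complete. First I would invoke Lemma~\ref{lemma:key} with $\cB = \cC$ to obtain an embedding $\theta \from \cC \to \cC'$ sending $\join \iota(a_i)$ to $\join \iota'(a_i)$. By symmetry, applying the lemma with the roles of $\cC$ and $\cC'$ exchanged yields an embedding $\psi \from \cC' \to \cC$ sending $\join \iota'(a_i)$ to $\join \iota(a_i)$.

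Next I would show $\theta$ and $\psi$ are mutually inverse. A typical element of $\cC$ has the form $c = \join \iota(a_i)$ for some finite $a_1, \dots, a_n \in \cA$, by finite-join density. Then $\theta(c) = \join \iota'(a_i)$ and $\psi(\theta(c)) = \join \iota(a_i) = c$, and symmetrically $\theta(\psi(c')) = c'$ for each $c' \in \cC'$. Hence $\theta$ is a bijective homomorphism of $\{-, \rest\}$-algebras whose inverse $\psi$ is also a homomorphism, so $\theta$ is an isomorphism. By construction $\theta \circ \iota = \iota'$, since $\iota(a)$ is a one-element join.

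For uniqueness, I would argue that any isomorphism $\theta' \from \cC \to \cC'$ with $\theta' \circ \iota = \iota'$ is forced to agree with $\theta$ on all of $\cC$. Indeed, since isomorphisms of difference--restriction algebras preserve all existing joins (being homomorphisms that reflect as well as preserve order), for $c = \join \iota(a_i)$ we must have $\theta'(c) = \theta'(\join \iota(a_i)) = \join \theta'(\iota(a_i)) = \join \iota'(a_i) = \theta(c)$. The only subtle point is the preservation of finite joins: this follows from Lemma~\ref{l:6} applied to the subtraction-algebra reducts, since $\theta'$ is a homomorphism and both $\join \iota(a_i)$ and its image $\join \theta'(\iota(a_i))$ exist.

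The main obstacle, and the place where care is genuinely needed, is the well-definedness and homomorphism property of $\theta$ and $\psi$ — but this is precisely the content of Lemma~\ref{lemma:key}, which has already been established. Given that lemma, the remaining work is essentially bookkeeping: checking that the two maps compose to the identity on finite joins, and that $\theta \circ \iota = \iota'$. The uniqueness clause requires only the observation that homomorphisms of these algebras commute with existing finite joins (Lemma~\ref{l:6}), so there is no real difficulty there either. Thus almost all of the technical weight has been front-loaded into Lemma~\ref{lemma:key}, and this proposition is a clean corollary.
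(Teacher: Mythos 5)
Your proposal is correct and takes essentially the same route as the paper: existence comes from \Cref{lemma:key} applied in both directions (the paper compresses your explicit construction of $\psi$ into ``by symmetry, $\theta$ has an inverse''), and uniqueness comes from forcing the value of any candidate isomorphism on finite joins of elements of $\iota[\cA]$. The only cosmetic difference is in the uniqueness step, where the paper derives $\theta(c) = \join \iota'(a_i)$ by an elementary hands-on argument (order preservation plus $\theta(c) - \iota'(a_1) - \dots - \iota'(a_n) = 0$ and properties of partial functions), whereas you invoke the general fact that isomorphisms preserve existing joins, justified via \Cref{l:6}; both justifications are valid.
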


\begin{proof}
For uniqueness, suppose we have an isomorphism $\theta \from
\algebra C \to \algebra C'$ satisfying $\theta \circ \iota =
\iota'$. Then for each $c \in \cA$, as $\iota[\cA]$ is finite-join dense in $\algebra C$, we have $c = \join \iota(a_i)$ for some $a_1, \dots, a_n \in \cA$. Then we have $c \geq \iota(a_i)$ for each $a_i$, which gives $\theta(c) \geq \iota'(a_i)$ for each $a_i$, and we have $c - \iota(a_1) - \dots -\iota(a_n) = 0$, which gives $\theta(c) - \iota'(a_1) - \dots - \iota'(a_n) = 0$. By properties of partial functions, these two conclusions yield $\theta(c) = \join \iota'(a_i)$. Thus $\theta$ is uniquely determined by $\iota$ and $\iota'$.

For existence, we use \Cref{lemma:key} to verify that $\theta \from \join \iota(a_i) \mapsto \join \iota'(a_i)$ is a well-defined embedding $\algebra C \to \algebra C'$ with $\iota' = \theta \circ \iota$. By symmetry, $\theta$ has an inverse, and is thus an isomorphism.
\end{proof}

Hence if an algebra has a finitary compatible completion then it is `unique', for which reason we may refer to a finitary compatible completion as \emph{the} finitary compatible completion. We may also, as is common, refer to $\algebra C$ itself as the finitary compatible completion of $\algebra A$, when $\iota \from \algebra A \hookrightarrow \algebra C$ is a finitary compatible completion.

Next, we show how to explicitly construct the finitary compatible
completion of any difference--restriction algebra, by showing that the
monad on difference--restriction algebras induced by the adjunction of
\Cref{t:adj} gives precisely the finitary compatible completion of the
algebra. We then obtain a duality between the finitarily compatibly
complete difference--restriction algebras and the locally compact
zero-dimensional Hausdorff \'etale spaces. Recall from
Section~\ref{sec:F} that, for every difference--restriction
algebra~$\cA$, we use $\pi_{\algebra A}$ to denote the canonical
quotient $\pf(\cA) \twoheadrightarrow \pf(\cA)/{\approx_\cA}$ (see
also the definition of $\approx_\cA$ given by
\eqref{eq:filter_equivalence}).

\begin{lemma}\label{l:5}
Let $\pi: X \twoheadrightarrow X_0$ be a Hausdorff \'etale
space. Then the algebra $\ko\pi$ is finitarily compatibly complete.
\end{lemma}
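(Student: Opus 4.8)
The plan is to invoke \Cref{l:4}, which reduces the claim to showing that $\ko\pi$ possesses a join for every pair of compatible elements. So I would fix compatible $U, V \in \ko\pi$ and argue that their set-theoretic union $U \cup V$ is their join. Note first that the order on $\ko\pi$ is inclusion: the derived meet $\cdot_\pi$ is set-theoretic intersection, so $U \leq V$ iff $U = U \cdot_\pi V = U \cap V$ iff $U \subseteq V$. Hence, once $U \cup V$ is shown to lie in $\ko\pi$, it is automatically the least upper bound of $\{U,V\}$ there, since it is contained in every common superset.

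Before that I would unfold the compatibility hypothesis. Using the formula $U \rest_\pi V = \pi^{-1}(\pi(U)) \cap V$ from \Cref{l:2}, the condition $U \rest_\pi V = V \rest_\pi U$ becomes $\pi^{-1}(\pi(U)) \cap V = \pi^{-1}(\pi(V)) \cap U$. Because $\pi$ is injective on each of $U$ and $V$, over each base point of $X_0$ each of $U$, $V$ has at most one point; the displayed equation then expresses exactly that $U$ and $V$ contain the same point over every $x_0 \in \pi(U) \cap \pi(V)$, i.e.\ they agree over their common image, which is the familiar compatibility of partial sections.

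The crux, and the only place the compatibility hypothesis is needed, is checking that $U \cup V$ is again an element of $\ko\pi$. Openness is immediate, and compactness follows since a finite union of compact sets is compact, so the real work is injectivity of $\pi|_{U \cup V}$. Given $x, x' \in U \cup V$ with $\pi(x) = \pi(x') =: x_0$, the cases where both points lie in $U$, or both in $V$, follow from injectivity of $\pi|_U$ and $\pi|_V$. In the mixed case, say $x \in U$ and $x' \in V$, we have $x_0 \in \pi(U) \cap \pi(V)$, so $x' \in \pi^{-1}(\pi(U)) \cap V$; the compatibility equation then places $x'$ in $\pi^{-1}(\pi(V)) \cap U \subseteq U$, and injectivity of $\pi|_U$ yields $x = x'$. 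Thus $\pi|_{U \cup V}$ is injective and $U \cup V \in \ko\pi$. Combined with the first paragraph, this shows $U \cup V = U \bjoin V$ in $\ko\pi$, and the reduction afforded by \Cref{l:4} finishes the proof. I expect this injectivity verification to be the main (indeed essentially the only nontrivial) step.
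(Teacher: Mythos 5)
Your proof is correct and follows essentially the same route as the paper: reduce to binary compatible joins via \Cref{l:4}, show that compatibility forces $\pi|_{U \cup V}$ to be injective so that $U \cup V \in \ko\pi$, and conclude it is the join. The only difference is presentational—the paper obtains the key fact (compatibility of $U,V$ is exactly injectivity of $\pi$ on $U \cup V$) by citing the correspondence with partial sections from \Cref{l:2}, whereas you verify it directly at the level of points from the formula $U \rest_\pi V = \pi^{-1}(\pi(U)) \cap V$.
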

\begin{proof}
By \Cref{l:4}, it suffices to show that $\ko\pi$ has joins of each
pair of compatible elements. Since the compatibility relation on an
actual $\{-, \rest\}$-algebra of partial functions is given by
``agreement on shared domain'', under the correspondence of
\Cref{l:2}, two elements $U, V \in \ko\pi$ are compatible precisely
when $\pi|_{U \cup V}$ is injective. Therefore, if $U, V \in \ko\pi$
are compatible, then $U \cup V \in \ko\pi$ and this is necessarily
the join of $U$ and $V$ in $\ko\pi$.
\end{proof}
\begin{theorem}\label{p:completion}
For every difference--restriction algebra $\cA$, the homomorphism
\begin{align*}
\eta_\cA \from \cA
& \to (G \circ F) (\cA)
\\ a
& \mapsto  \widehat a = \{\mu \mid \mu \in \pf(\cA)
\textrel{and} a \in \mu\}
\end{align*}
is the finitary compatible completion of $\cA$.
\end{theorem}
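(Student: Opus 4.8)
The plan is to verify directly the three conditions in \Cref{def:completion}: that $\eta_\cA$ is an embedding of $\{-,\rest\}$-algebras, that $(G\circ F)(\cA)$ is a finitarily compatibly complete difference--restriction algebra, and that $\eta_\cA[\cA]$ is finite-join dense in $(G\circ F)(\cA)$.

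The first two conditions follow at once from work already done. That $\eta_\cA$ is an embedding was observed in the proof of \Cref{t:adj}, where $\eta_\cA$ was identified with the representation map of \cite{diff-rest1}; in particular it is injective. That $(G\circ F)(\cA)$ is finitarily compatibly complete is exactly \Cref{l:5}, since $(G\circ F)(\cA) = \ko{\pi_\cA}$ for the Hausdorff \'etale space $\pi_\cA$ produced by $F$.

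The main content is therefore the finite-join density, which I would argue as follows. Given an arbitrary element $U$ of $\ko{\pi_\cA}$---a compact open subset of $\pf(\cA)$ on which $\pi_\cA$ is injective---I would first apply \Cref{lemma:compact} to write $U = \widehat{a_1} \cup \dots \cup \widehat{a_n}$ for some $a_1, \dots, a_n \in \cA$. Each $\widehat{a_i}$ is itself an element of $\ko{\pi_\cA}$, being compact open and, by \Cref{p:1}\ref{item:4}, injective under $\pi_\cA$; moreover $\widehat{a_i} = \eta_\cA(a_i)$. It then remains to see that $U$ is the \emph{join} of the $\widehat{a_i}$ in $\ko{\pi_\cA}$, not merely their set-theoretic union. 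Because each $\widehat{a_i} \subseteq U$ and $\pi_\cA|_U$ is injective, $\pi_\cA$ is injective on every union $\widehat{a_i} \cup \widehat{a_j}$, so by the description of compatibility in the proof of \Cref{l:5} the $\widehat{a_i}$ are pairwise compatible. By \Cref{l:4} together with \Cref{l:5} the join $\join_i \widehat{a_i}$ exists in $\ko{\pi_\cA}$, and---again as computed in the proof of \Cref{l:5}---a compatible join is taken as a union, so $\join_i \widehat{a_i} = \bigcup_i \widehat{a_i} = U$. Hence $U = \join_i \eta_\cA(a_i)$ lies in the finite-join closure of $\eta_\cA[\cA]$, establishing finite-join density.

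With all three conditions verified, $\eta_\cA$ is a finitary compatible completion of $\cA$. I do not expect a serious obstacle: the groundwork in \Cref{lemma:compact}, \Cref{l:5}, and the representation result underlying $\eta_\cA$ does essentially all of the work. The one point requiring genuine care is the passage from set-theoretic union to join in $\ko{\pi_\cA}$, which rests precisely on the observation that the basic pieces $\widehat{a_i}$ are mutually compatible because they sit inside a common compact open set on which $\pi_\cA$ is injective.
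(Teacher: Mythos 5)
Your proposal is correct and follows essentially the same route as the paper: the embedding comes from the representation map, finitary compatible completeness from \Cref{l:5}, and finite-join density from \Cref{lemma:compact} together with $\eta_\cA(a)=\widehat a$. The only difference is that you spell out the passage from set-theoretic union to join in $\ko{\pi_\cA}$ (via pairwise compatibility of the $\widehat{a_i}$), a detail the paper's terse proof leaves implicit; this is a valid (if slightly roundabout) filling-in, since more directly $U=\bigcup_i\widehat{a_i}\in\ko{\pi_\cA}$ is automatically the least upper bound of the $\widehat{a_i}$ under inclusion.
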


\begin{proof}
We already mentioned in \Cref{sec:duality} that $\eta_\cA$ is a
representation of $\cA$, which implies that $\eta_\cA$ is
injective. The finitary compatible completeness of $(G \circ
F)(\algebra A)= \ko{\pi_\cA}$ follows from \Cref{l:5}. The fact that
$\iota_\cA(\cA)$ is finite-join dense in $(G \circ F)(\algebra A)$
follows from \Cref{lemma:compact} and from having $\eta_\cA(a) =
\widehat a$, for all $a \in \cA$.
\end{proof}

\begin{definition}
We write $\fralg$ for the full subcategory of $\ralg$ consisting of
the difference--restriction algebras that are finitarily compatibly
complete.

A topological space $X$ is \defn{locally compact} (respectively,
\defn{zero-dimensional}) if every $x \in X$ has a neighborhood basis
consisting of compact (respectively, clopen) subsets of $X$. We
write $\lczdet$ for the full subcategory of $\hauset$ consisting of
the Hausdorff \'etale spaces $\pi \from X \twoheadrightarrow X_0$
such that $X$ is locally compact and zero-dimensional.
\end{definition}

\begin{remark}
It is well-known that, a Hausdorff topological space is locally
compact and zero-dimensional if, and only if, it has a basis of
compact open subsets. In what follows, we shall use this equivalence
freely.
\end{remark}

The notation for $\lczdet$ is intended to suggest thinking of locally
compact zero-dimensional Hausdorff spaces as `generalised Stone
spaces'.

Our next goal is to show that the categories $\fralg$ and $\lczdet$
are dually equivalent. For that, it will be useful to note the
following property of locally compact, zero-dimensional, and Hausdorff
\'etale spaces:

\begin{lemma}\label{l:3}
Let $\pi: X \twoheadrightarrow X_0$ be a Hausdorff \'etale space,
with $X$ locally compact and zero-dimensional. Then for all $x \in
X$, the set $\lambda_\pi^0(x)$ (recall~\eqref{eq:lambda}) is a base
of the neighbourhood filter of $x$, that is, for every open subset
$U \subseteq X$ such that $x \in U$, there exists $V \in
\lambda_\pi^0(x)$ satisfying $V \subseteq U$.
\end{lemma}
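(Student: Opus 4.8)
The plan is to combine two facts: that $\pi$ is a local homeomorphism (so each point has a neighbourhood on which $\pi$ is injective) and that $X$, being Hausdorff, locally compact, and zero-dimensional, has a basis of compact open sets (the equivalence recorded in the remark preceding this lemma). Fix $x \in X$ and an open set $U$ with $x \in U$; I must produce a compact open $V$ with $x \in V \subseteq U$ on which $\pi$ is injective, for then $V \in \lambda_\pi^0(x)$ and the lemma follows.

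First I would use \Cref{def:local_homeomorphism}: since $\pi$ is a local homeomorphism, there is an open neighbourhood $W$ of $x$ such that $\pi|_W$ is a homeomorphism onto its image; in particular $\pi$ is injective on $W$. Replacing $U$ by the open neighbourhood $U \cap W$ of $x$, I may assume from the outset that $\pi$ is injective on the open set $U$ itself.

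Next I would invoke the basis of compact open sets: since these form a basis of $X$ and $U$ is an open neighbourhood of $x$, there is a compact open $V$ with $x \in V \subseteq U$. As $V \subseteq U$ and $\pi$ is injective on $U$, it is injective on $V$ as well. Thus $V$ is compact, open, contains $x$, satisfies $V \subseteq U$, and $\pi|_V$ is injective, so $V \in \lambda_\pi^0(x)$, as required.

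There is no genuine obstacle here; the only point requiring attention is the order of operations---first shrinking to a neighbourhood on which $\pi$ is injective, and only then extracting a compact open subset---since the basis sets need not themselves be sets on which $\pi$ is injective. Once the injectivity neighbourhood is fixed, injectivity is inherited by every subset, so the compact open set supplied by the basis automatically lies in $\lambda_\pi^0(x)$.
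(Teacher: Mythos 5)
Your proof is correct and follows essentially the same route as the paper's: both intersect $U$ with an open neighbourhood on which $\pi$ is injective (supplied by the local homeomorphism property) and then extract, from the basis of compact open sets guaranteed by local compactness and zero-dimensionality in the Hausdorff setting, a compact open $V$ containing $x$ inside that intersection. Your closing remark about the order of operations is exactly the point the paper's proof also implicitly relies on, so there is nothing to add.
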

\begin{proof}
Let $x \in X$ and $U \subseteq X$ be an open neighborhood of
$x$. Since $\pi$ is a local homeomorphism, $x$ has an open
neighborhood $U' \subseteq X$ such that $\pi|_{U'}$ is injective
(actually, a homeomorphism) and thus, so is $\pi_{U \cap U'}$. On
the other hand, since $X$ is Hausdorff, locally compact and
zero-dimensional, $U \cap U'$ may be written as a union of
compact open subsets of $X$. In particular, there exists one such
$V$ satisfying $x \in V \subseteq U \cap U' \subseteq U$. Clearly,
we also have $V \in \lambda^0_\pi(x)$.
\end{proof}

\begin{proposition}\label{t:discrete-duality} There is a duality
between $\fralg$ and $\lczdet$.
\end{proposition}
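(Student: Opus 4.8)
The plan is to derive the duality from the adjunction $F \dashv G$ of \Cref{t:adj} via the standard principle that a contravariant adjunction restricts to a dual equivalence between the full subcategories of objects on which the unit $\eta$ and the counit $\lambda$ are isomorphisms. First I would record that both functors already land in the relevant subcategories on \emph{all} objects: $G(\pi) = \ko\pi$ is finitarily compatibly complete by \Cref{l:5}, so $G$ corestricts to $\fralg$; and for every $\cA$ the space $\pf(\cA)$ is Hausdorff with basis $\{\widehat a\}$ of compact (\Cref{lemma:compact}) open sets, hence locally compact and zero-dimensional by the standard characterisation noted above, so $F$ corestricts to $\lczdet$. Thus it remains only to show that $\eta_\cA$ is an isomorphism for $\cA \in \fralg$ and that $\lambda_\pi$ is an isomorphism for $\pi \in \lczdet$; the converse implications, which identify these as exactly the fixed subcategories, follow respectively from \Cref{l:5} and from $\{\widehat U\}$ being a basis of compact opens.

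For the unit, $\eta_\cA$ is an injective representation with finite-join-dense image by \Cref{p:completion}. To obtain surjectivity when $\cA$ is finitarily compatibly complete, I would take an arbitrary element of $(G \circ F)(\cA)$ and write it as $\join \eta_\cA(a_i)$ for finitely many $a_i \in \cA$; these $a_i$ are pairwise compatible, since their join exists and $\eta_\cA$, being an embedding, reflects the relation $a \rest b = b \rest a$. Finitary compatible completeness of $\cA$ then yields $\join a_i \in \cA$, and \Cref{l:6} gives $\eta_\cA(\join a_i) = \join \eta_\cA(a_i)$, so $\eta_\cA$ is onto and hence an isomorphism. (Equivalently, one may compare $\eta_\cA$ with $\mathrm{id}_\cA$ using uniqueness of completions, \Cref{p:1_0}.)

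For the counit I would invoke \Cref{r:1}: it suffices to show $\lambda_\pi$ is a total homeomorphism satisfying \eqref{eq:4}. Totality and the fact that $\lambda_\pi^0(x)$ is a neighbourhood basis of $x$ are given by \Cref{l:3}. The identity $\lambda_\pi^{-1}(\widehat U) = U$ from the proof of \Cref{t:adj} gives continuity and shows $\lambda_\pi$ carries the basis $\{U\}$ bijectively onto the basis $\{\widehat U\}$, so $\lambda_\pi$ is a homeomorphism once bijective. Injectivity follows by separating distinct points of $X$ with a compact open set (possible as $X$ is Hausdorff with a basis of compact opens). For surjectivity, given a maximal filter $\mu$ of $\ko\pi$, I would use that its members are compact with the finite intersection property to produce a point $x \in \bigcap_{U \in \mu} U$; then $\mu \subseteq \lambda_\pi^0(x)$, and maximality forces $\mu = \lambda_\pi^0(x) = \lambda_\pi(x)$. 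The forward direction of \eqref{eq:4} is \ref{item:Q1}; for the reverse, $\lambda_\pi(x) \approx_{\ko\pi} \lambda_\pi(x')$ forces $\pi(x') \in \pi(U)$ for every $U \in \lambda_\pi^0(x)$ via \eqref{eq:filter_equivalence}, and since the open images $\pi(U)$ form a neighbourhood basis of $\pi(x)$, Hausdorffness of $X_0$ gives $\pi(x) = \pi(x')$.

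Finally, since $F$ and $G$ corestrict to functors between $\fralg$ and $\lczdet^{\operatorname{op}}$ and the restricted $\eta$ and $\lambda$ are natural isomorphisms, the adjunction restricts to the desired duality. I expect the counit step to be the main obstacle---specifically the finite-intersection-property argument for surjectivity of $\lambda_\pi$ and the reverse direction of \eqref{eq:4}---whereas the remaining verifications are bookkeeping with results already in hand.
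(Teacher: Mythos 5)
Your proposal is correct and follows essentially the same route as the paper: the same decomposition into four verifications (both functors land in the subcategories, and the restricted unit and counit are isomorphisms), using \Cref{l:5} and \Cref{lemma:compact} for the corestrictions, \Cref{r:1}, \Cref{l:3}, the finite-intersection-property argument for surjectivity of $\lambda_\pi$, and \eqref{eq:filter_equivalence} together with Hausdorffness of $X_0$ for the reverse direction of \eqref{eq:4}. The only deviation is your direct surjectivity argument for $\eta_\cA$ (finite-join density, reflection of compatibility by the embedding, then \Cref{l:6}), whereas the paper compares $\eta_\cA$ with $\mathrm{id}_{\cA}$ via uniqueness of completions (\Cref{p:1_0})---precisely the alternative you note parenthetically; both are sound.
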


\begin{proof} The following statements are sufficient to prove the
proposition.

\noindent\underline{Image of $F$ is in $\lczdet$}: We need to show
that an arbitrary $\pi_\cA$ is locally compact and zero
dimensional. The sets of the form $\widehat a$, for $a \in \cA$ are
by definition open and by \Cref{lemma:compact}, compact. Since
$\pi_\cA$ is Hausdorff, they are also closed; thus they are
clopen. We noted in \Cref{sec:F} that they form a basis, so
$\pi_\cA$ is indeed zero dimensional. Now let $\mu$ be a maximal
filter of $\cA$ and take any $a \in \mu$. Then $\widehat a$ is a
compact open neighbourhood of $\mu$, so $\pi_\cA$ is also locally
compact.

\noindent\underline{Image of $G$ is in $\fralg$}: This is a
consequence of \Cref{l:5}.

\noindent\underline{$\eta$ restricted to $\fralg$ is a natural
isomorphism}: We need to show that if $\cA$ is finitarily
compatibly complete, then $\eta_\cA$ is an isomorphism of
algebras. Assuming $\cA$ is finitarily compatibly complete, the
identity map on $\cA$ is a finitary compatible completion. By
\Cref{p:completion}, $\eta_\cA$ is also a finitary compatible
completion. Then by \Cref{p:1_0}, the map $\eta_\cA$ is the
composition of the identity with an isomorphism, i.e.~$\eta_\cA$ is
an isomorphism.

\noindent\underline{$\lambda$ restricted to $\lczdet$ is a natural
isomorphism}:
Let $\pi: X \twoheadrightarrow X_0$ be a Hausdorff \'etale space, with $X$
a locally compact and zero-dimensional space. We need to show that
$\lambda_\pi$ defines an isomorphism of \'etale spaces (recall
\Cref{r:1}). The fact that $\lambda_\pi$ is a total map follows
immediately from \Cref{l:3} and, since $X$ is Hausdorff, so does
injectivity of $\lambda_\pi$. In order to show that $\lambda_\pi$ is
surjective, it suffices to show that, for all $\mu \in \pf(\ko\pi)$,
the intersection $\bigcap \mu$ is nonempty. Indeed, if $x \in \bigcap
\mu$ then $\mu \subseteq \lambda_\pi(x)$ and, by maximality, the
equality follows. In turn, to see that $\bigcap \mu$ is nonempty, we
pick any $K \in \mu$ and observe that $\cF = \{V \cap K \mid V \in
\mu\}$ is a family of closed subsets of $K$ having the finite
intersection property. Since $K$ is compact, it follows that $\bigcap
\mu = \bigcap \cF \neq \emptyset$, as required. Let us now prove that
$\lambda_\pi$ is an open map. Given $V \in \ko\pi$, it is easy to
check that $\lambda_\pi(V) = \widehat V$. Now, for an arbitrary open
subset $U \subseteq X$ and $x \in U$, by \Cref{l:3}, there exists $V
\in \ko\pi$ such that $x \in V \subseteq U$. This yields
$\lambda_\pi(x) \in \widehat V = \lambda_\pi(V) \subseteq
\lambda_\pi(U)$, from where we may conclude that $\lambda_\pi(U)$ is
open. It remains to show that the equivalence
\[\pi(x) = \pi(x') \iff \lambda_\pi(x) \approx_{\ko\pi} \lambda_\pi(x')\]
holds for all $x, x' \in X$. The forward implication is a consequence
of property \ref{item:Q1} for $\lambda_\pi$. Conversely, suppose that
$\pi(x) \neq \pi(x')$. Since $X_0$ is Hausdorff, there exist open
subsets $U, U' \subseteq X_0$ such that $\pi(x) \in U$, $\pi(x') \in
U'$, and $U \cap U' = \emptyset$. By \Cref{l:3}, there exist $V, V'
\in \ko\pi$ such that $x \in V \subseteq \pi^{-1}(U)$ and $x' \in V'
\subseteq \pi^{-1}(U')$. Clearly, $V \in \lambda_\pi(x)$ and $V' \in
\lambda_\pi(x')$. Moreover, we cannot have $x' \in \pi^{-1}(\pi(V))$
or else, we would have
\[\pi(x') \in \pi(V) \cap \pi(V') \subseteq \pi(\pi^{-1}(U)) \cap
\pi(\pi^{-1}(U')) = U \cap U',\]
contradicting emptiness of the latter intersection. By
\eqref{eq:filter_equivalence}, it follows that $\lambda_\pi(x)
\not\approx_{\ko\pi} \lambda_\pi(x')$, as intended.
\end{proof}

\begin{remark}\label{r:3}
The duality between $\fralg$ and $\lczdet$ proved in
\Cref{t:discrete-duality} is precisely the fixed point duality
induced by the adjunction of \Cref{t:adj}. Indeed, if $\cA$ is a
difference--restriction algebra such that $\eta_\cA$ is an
isomorphism, then $\cA$ is isomorphic to $GF\cA$ and, in the proof
of \Cref{t:discrete-duality}, we have shown that the image of $G$
consists of finitarily compatibly complete difference--restriction
algebras. Thus, $\cA \in \fralg$. Likewise, we can argue that $\pi
\in \lczdet$ whenever $\lambda_\pi$ is an isomorphism.
\end{remark}

\begin{definition}
  Given partial functions $f, g$ from $X$ to $Y$, their
  \defn{preferential union} (also known as \defn{override}) is defined
  by
  \[f \pref g = \{(x, y) \mid (x, y) \in f  \text{ or } ((x, y) \in g
    \text{ and } x \notin \dom(f))\}.\]
\end{definition}

\begin{corollary}\label{t:duality_pref}
  There is a duality between the category $\CC$ of representable
  $\{-,\rest,\pref\}$-algebras and $\{-, \rest,
  \pref\}$-homomorphisms, and
  $\lczdet$.\footnote{In \cite{Stokes2010}, a finite equational axiomatisation was given for the representable  $\{-, \rest, \pref\}$-algebras. See~\cite{jackson2021restriction} for a list of
    signatures that are term-equivalent to $\{-, \rest, \pref\}$.}
\end{corollary}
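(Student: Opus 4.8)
The plan is to deduce the corollary from the duality of \Cref{t:discrete-duality} by exhibiting an isomorphism of categories between $\CC$ and $\fralg$. The bridge is the observation that, for partial functions, preferential union is \emph{definable} from relative complement, domain restriction, and the compatible binary join: for all $f, g$,
\[ f \pref g = f \bjoin (g - (f \rest g)), \]
where $f$ and $g - (f \rest g)$ have disjoint domains, hence are compatible, so that the right-hand join is the genuine compatible join (namely the union). Since this is an equation in $\{-, \rest, \pref, \bjoin\}$ valid in every algebra of partial functions, and since compatibility is equationally defined, the identity $a \pref b = a \bjoin (b - (a \rest b))$ and the compatibility of $a$ with $b - (a \rest b)$ both transfer to any representable algebra.

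First I would corestrict the forgetful functor to $U \from \CC \to \ralg$ and check its image lies in $\fralg$. Given a representable $\{-, \rest, \pref\}$-algebra $\cA$, its $\{-, \rest\}$-reduct is a difference--restriction algebra; and for compatible $a, b \in \cA$ the element $a \pref b$ coincides in any representation with $a \cup b$, so $a \pref b$ is the join of $a$ and $b$. Thus the reduct has binary compatible joins, and by \Cref{l:4} it is finitarily compatibly complete. Next I would argue $U$ is full and faithful: faithfulness is immediate, and for fullness I take a $\{-, \rest\}$-homomorphism $h \from \cA \to \cB$ between objects of $\fralg$ and apply \Cref{l:6} to $S = \{a,\, b - (a \rest b)\}$. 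The join $\join_\cA S = a \pref b$ exists in $\cA$, while $h(a)$ and $h(b) - (h(a) \rest h(b))$ have disjoint domains, hence are compatible, so $\join_\cB h[S] = h(a) \pref h(b)$ exists in $\cB$; \Cref{l:6} then yields $h(a \pref b) = h(a) \pref h(b)$, so $h$ is automatically a $\{-, \rest, \pref\}$-homomorphism.

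Finally I would show $U$ is a bijection on objects. Given $\cB \in \fralg$, I define $\pref$ on $\cB$ by the displayed identity; the join exists because $a$ and $b - (a \rest b)$ are always compatible and $\cB$ is finitarily compatibly complete. Any representation $\rho$ of $\cB$ preserves this $\pref$, since $\rho$ preserves existing compatible joins (again by \Cref{l:6}), giving $\rho(a \pref b) = \rho(a) \cup \rho(b - (a \rest b)) = \rho(a) \pref \rho(b)$; hence $(\cB, -, \rest, \pref)$ is a representable $\{-, \rest, \pref\}$-algebra with reduct $\cB$. As the defining identity forces $\pref$ uniquely, $U$ is bijective on objects, and therefore an isomorphism $\CC \cong \fralg$. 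Composing this covariant isomorphism with the duality of \Cref{t:discrete-duality} gives the asserted duality between $\CC$ and $\lczdet$.

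The main obstacle is recognising the definability identity and then marshalling the three applications of \Cref{l:6} around it (that homomorphisms, and representations in particular, preserve existing compatible joins). Once it is seen that $\pref$ is nothing more than relative complement, restriction, and the always-available join of two disjoint-domain pieces, the equivalence of $\CC$ with $\fralg$ becomes routine bookkeeping, and no new topological work beyond \Cref{t:discrete-duality} is required.
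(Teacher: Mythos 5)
Your proposal is correct and follows essentially the same route as the paper: both rest on the identity $a \pref b = a \bjoin (b - (a \rest b))$, use \Cref{l:4} and \Cref{l:6} to see that representable $\{-,\rest,\pref\}$-algebras are exactly the finitarily compatibly complete difference--restriction algebras and that $\{-,\rest\}$-homomorphisms between them automatically preserve $\pref$, and then compose the resulting isomorphism $\CC \cong \fralg$ with \Cref{t:discrete-duality}. The only difference is presentational: you verify that the forgetful functor is fully faithful and bijective on objects, whereas the paper exhibits the explicit inverse functor $H$ and checks $U \circ H$ and $H \circ U$ are identities.
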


\begin{proof}   By \Cref{t:discrete-duality}, it suffices to show that the
  categories $\fralg$ and $\CC$ are equivalent; in fact they are isomorphic. We clearly have a
  forgetful functor $U: \CC \to \fralg$. Conversely, we define $H:
  \fralg \to \CC$ as follows. \setlength{\columnsep}{1mm}
  \begin{itemize}
  \item If $(\cA, -, \rest)$ is a finitarily compatibly complete
    difference--restriction algebra, then $H(\cA) = (\cA, -, \rest,
    \pref)$, where
    \[a \pref b \coloneqq a \vee (b - (a\rest b)),\]
    for all $a, b \in \cA$.  This operation is well-defined since $a$
    and $b- (a \rest b)$ are compatible elements. In any algebra of partial functions (where joins are necessarily unions) this operation is clearly preferential union. It follows that any representation of $(\cA, -, \rest)$ is also a representation of $(\cA, -, \rest,
    \pref)$. So since $(\cA, -, \rest)$ is a representable $\{-, \rest\}$-algebra, we know that $H(\cA)$ is a representable $\{-, \rest, \pref\}$-algebra. 
  \item If $h: \cA \to \cB$ is a $\{-, \rest\}$-homomorphism,
    then we set $H(h) = h$. By \Cref{l:6}, the map $h$ preserves $\pref$ and
    thus, it is a homomorphism in $\CC$.
  \end{itemize}
  It is then clear that $H$ is a well-defined functor such that
  $U\circ H$ is the identity functor on $\fralg$. To show these
  functors determine an isomorphism, it remains to show that $H \circ U$ is the identity
  functor on~$\CC$. This is obvious at the level of
  morphisms, and also holds at the level of objects since $a \pref b = a \vee (b - (a\rest b))$ is valid in any representable $\{-, \rest, \pref\}$-algebra. 
\end{proof}

\begin{remark}
  While we have shown in the proof of \Cref{t:duality_pref} that the
  categories of finitarily compatibly complete difference--restriction
  algebras and of representable $\{-, \rest, \pref\}$-algebras are
  equivalent, it is proved in~\cite{Leech1990} that the latter is
  equivalent to the category of \emph{right-handed skew Boolean intersection
    algebras} and a duality for this category is developed
  in~\cite{skewwithintersection}. The interested reader can check that
  the duality of~\cite[Theorem~4.13]{skewwithintersection} is the same
  as ours. Other, also related, dualities for skew Boolean algebras
  can be found in~\cite{Kudryavtseva2012, kudryavtseva2016boolean}.
\end{remark}

\begin{corollary}
The category $\fralg$ is a reflective subcategory of $\ralg$, with reflection given by finitary compatible completion.
\end{corollary}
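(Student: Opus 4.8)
The plan is to exhibit the finitary compatible completion as the left adjoint to the inclusion functor $\iota \from \fralg \hookrightarrow \ralg$. By \Cref{l:5}, for every difference--restriction algebra $\cA$ the algebra $(G \circ F)(\cA) = \ko{\pi_\cA}$ lies in $\fralg$, so $G \circ F$ corestricts to a functor $R \from \ralg \to \fralg$. \Cref{p:completion} identifies the unit component $\eta_\cA \from \cA \to R(\cA)$ as the finitary compatible completion of $\cA$, and naturality of $\eta$ was already established in the proof of \Cref{t:adj}. It therefore remains only to verify the universal property: for every $\cB \in \fralg$ and every $\{-,\rest\}$-homomorphism $f \from \cA \to \cB$, there is a unique homomorphism $\bar f \from R(\cA) \to \cB$ with $\bar f \circ \eta_\cA = f$.

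For existence, I would define $\bar f$ by $\bar f(\join \eta_\cA(a_i)) = \join f(a_i)$, where $a_1, \dots, a_n \in \cA$. This is meaningful because every element of $R(\cA)$ is such a finite join, by the finite-join density asserted in \Cref{p:completion}, and the right-hand join exists in $\cB$: being a $\rest$-homomorphism, $f$ preserves the compatibility relation of \Cref{algebra_compatibility}, and $\eta_\cA$ both preserves and reflects it, so pairwise-compatible generators $\eta_\cA(a_i)$ correspond to pairwise-compatible $a_i$ and hence to pairwise-compatible $f(a_i)$, whose join exists by finitary compatible completeness of $\cB$. That $\bar f$ is a well-defined homomorphism is precisely the content of the proof of \Cref{lemma:key} with $\iota'$ replaced by $f$: on inspecting that proof, injectivity of $\iota'$ is invoked only in its final paragraph (to prove $\theta$ injective), whereas well-definedness and preservation of $-$ and $\rest$ rely solely on $\iota'$ being a homomorphism. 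The identical computations therefore yield a homomorphism $\bar f$, and $\bar f \circ \eta_\cA = f$ is immediate on taking singleton joins.

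For uniqueness, suppose $g \from R(\cA) \to \cB$ also satisfies $g \circ \eta_\cA = f$. An arbitrary element of $R(\cA)$ has the form $\join \eta_\cA(a_i)$, and since $\cB$ is finitarily compatibly complete the join $\join g(\eta_\cA(a_i)) = \join f(a_i)$ exists; \Cref{l:6} then gives $g(\join \eta_\cA(a_i)) = \join g(\eta_\cA(a_i)) = \join f(a_i) = \bar f(\join \eta_\cA(a_i))$, so $g = \bar f$. This exhibits each $\eta_\cA$ as a universal arrow from $\cA$ to the inclusion $\iota$, whence $\iota$ admits the left adjoint $R$ with unit $\eta$; that is, $\fralg$ is a reflective subcategory of $\ralg$ whose reflection is finitary compatible completion. (Equivalently, this says that the monad $G \circ F$ of \Cref{t:adj} is idempotent with category of algebras $\fralg$.)

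I expect the only genuine obstacle to be the existence half, namely confirming that the well-definedness and homomorphism-preservation computations of \Cref{lemma:key} really do survive the weakening of $\iota'$ from an embedding to an arbitrary homomorphism. Once that bookkeeping is checked, the compatibility-preservation of $f$ secures the existence of the target joins and \Cref{l:6} makes the uniqueness argument routine.
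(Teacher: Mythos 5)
Your proof is correct, but it takes a genuinely different route from the paper's. The paper's proof is soft and categorical: it invokes the fact, established in the proof of \Cref{t:discrete-duality}, that the restriction of $G \circ F$ to $\fralg$ is naturally isomorphic to the identity, declares it a ``direct consequence'' that $G \circ F \from \ralg \to \fralg$ is left adjoint to the inclusion, and then cites \Cref{p:completion} to identify the reflector. (Note that the uniqueness half of that ``direct consequence'' is not purely formal from the natural isomorphism alone---it implicitly uses the monad unit laws coming from \Cref{t:adj}, e.g.\ to get $\eta_{GF\cA} = GF(\eta_\cA)$.) You instead bypass \Cref{t:discrete-duality} entirely and verify the universal property of $\eta_\cA$ by hand: existence of the factorisation via a rerun of \Cref{lemma:key} with $\iota'$ weakened from an embedding to an arbitrary homomorphism, and uniqueness via \Cref{l:6}. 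Your key observation---that injectivity of $\iota'$ enters the proof of \Cref{lemma:key} only in the final paragraph, to prove $\theta$ injective, while well-definedness and preservation of $-$ and $\rest$ use only that $\iota'$ is a homomorphism---is accurate on inspection of that proof, and your compatibility bookkeeping is also right: $\eta_\cA$ reflects the compatibility relation of \Cref{algebra_compatibility} because it is injective, and $f$ preserves it because it is a $\rest$-homomorphism, so the target joins in $\cB$ exist. What each approach buys: the paper's argument is shorter and reuses the duality already proved (whose key ingredient, via \Cref{p:1_0}, is essentially the same uniqueness-of-completion machinery), whereas yours is more self-contained and makes the reflection's universal property explicit, at the cost of having to certify that \Cref{lemma:key} survives the weakening of its hypotheses---which it does.
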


\begin{proof}
We saw in the proof of \Cref{t:discrete-duality} that the restriction of $G \circ F$ to $\fralg$ is naturally isomorphic to the identity. It is a direct consequence that $G \circ F$, viewed as a functor $\ralg \to \fralg$, is left adjoint to the inclusion $\fralg \to \ralg$. By \Cref{p:completion}, the reflector $G \circ F$ is finitary compatible completion.
\end{proof}

\begin{corollary}
The category $\lczdet$ is a reflective subcategory of $\hauset$.
\end{corollary}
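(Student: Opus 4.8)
The plan is to mirror the proof of the preceding corollary (that $\fralg$ is reflective in $\ralg$), interchanging the two categories and using the natural transformation $\lambda$ in place of $\eta$. The key observation is that the contravariant adjunction $F \dashv G$ of \Cref{t:adj} can equally be read as a covariant adjunction $G^{\operatorname{op}} \dashv F^{\operatorname{op}}$ between $\hauset$ and $\ralg^{\operatorname{op}}$, with left adjoint $G^{\operatorname{op}} \from \hauset \to \ralg^{\operatorname{op}}$ and right adjoint $F^{\operatorname{op}} \from \ralg^{\operatorname{op}} \to \hauset$. Under this reading the counit of $F \dashv G$ becomes the unit of $G^{\operatorname{op}} \dashv F^{\operatorname{op}}$; concretely this unit is exactly $\lambda \from \mathrm{Id}_{\hauset} \Rightarrow F \circ G$, and the induced monad on $\hauset$ sends each object $\pi$ to $(F \circ G)(\pi)$.

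I would then extract from the proof of \Cref{t:discrete-duality} the two facts needed to apply the idempotent-monad argument on the topological side. First, the image of $F$—and therefore that of the composite $F \circ G$—lies in $\lczdet$. Second, the restriction of $\lambda$ to $\lczdet$ is a natural isomorphism; indeed, by \Cref{r:3}, the objects $\pi$ for which $\lambda_\pi$ is an isomorphism are precisely those of $\lczdet$. These are the same two inputs (a fixed-point description of the subcategory, together with the fact that the monad lands in it) that drove the preceding corollary, now stated for $\lambda$ rather than for $\eta$.

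With these in hand, the conclusion follows exactly as before: $F \circ G$, regarded as a functor $\hauset \to \lczdet$, is left adjoint to the inclusion $\lczdet \hookrightarrow \hauset$, which is the assertion that $\lczdet$ is a reflective subcategory of $\hauset$. The only step demanding care is the bookkeeping of opposite categories—verifying that $\lambda$ really does serve as the unit of the covariant adjunction $G^{\operatorname{op}} \dashv F^{\operatorname{op}}$, so that the argument of the preceding corollary transfers verbatim. No genuinely new algebraic or topological content is required beyond what \Cref{t:discrete-duality} and \Cref{r:3} already supply.
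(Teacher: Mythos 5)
Your proposal is correct and follows essentially the same route as the paper: both extract from the proof of \Cref{t:discrete-duality} that $F \circ G$ lands in $\lczdet$ and restricts there to (something naturally isomorphic to) the identity, and conclude that $F \circ G$, viewed as a functor $\hauset \to \lczdet$, is left adjoint to the inclusion. The extra bookkeeping you supply---reading the contravariant adjunction as $G^{\operatorname{op}} \dashv F^{\operatorname{op}}$ with unit $\lambda$, and invoking \Cref{r:3} for the fixed-point description---is left implicit in the paper's one-line proof but is the same argument.
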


\begin{proof}
We saw in the proof of \Cref{t:discrete-duality} that the restriction of $F \circ G$ to $\lczdet$ is naturally isomorphic to the identity. Thus $F \circ G$, viewed as a functor $\hauset \to \lczdet$, is left adjoint to the inclusion $\lczdet \to \hauset$. 
\end{proof}


We have now achieved all our main objectives for this section. We end by noting that there are several equivalent definitions of a finitary compatible completion possible, just as there are  several equivalent definitions of completions of Boolean algebras~\cite{completion}.

\begin{proposition}\label{equivalent} Let $\iota \from \algebra A \hookrightarrow
\algebra C$ be an embedding of difference--restriction algebras. The
following are equivalent.
\begin{enumerate}[label = (\alph*)]
\item\label{item:a} $\algebra C$ is finitarily compatibly complete, and the
image of $\algebra A$ is finite-join dense in $\algebra C$.
\item\label{item:c} $\algebra C$ is the `smallest' extension of
$\algebra A$ that is finitarily compatibly complete. That is,
$\algebra C$ is finitarily compatibly complete, and for every
other embedding $\kappa \from \cA \hookrightarrow \cB$ into a
finitarily compatibly complete difference--restriction
algebra~$\cB$, there exists an embedding $\widehat \kappa \from
\cC \hookrightarrow \cB$ making the following diagram commute.
\begin{center}
\begin{tikzpicture}[node distance = 20mm]
\node (A) at (0,0) {$\cA$}; \node[right of = A] (C) {$\cC$};
\node[below of = C, yshift = 5mm] (B) {$\cB$};

\draw[right hook->] (A) to node[above] {$\iota$} (C); \draw
[right hook->] (A) to node[below]{$\kappa$} (B); \draw[right hook->,
dashed] (C) to node[right] {$\widehat \kappa$} (B);
\end{tikzpicture}
\end{center}
\item\label{item:d} $\algebra C$ is the `largest' extension of
$\algebra A$ in which the image of $\algebra A$ is finite-join
dense. That is, $\iota[\algebra A]$ is finite-join dense in
$\algebra C$, and for every other embedding $\kappa \from \cA
\hookrightarrow \cB$ into a difference--restriction algebra~$\cB$
in which the image of $\algebra A$ is finite-join dense, there
exists an embedding $\widehat \kappa \from
\cB \hookrightarrow \cC$ making the following diagram commute.
\begin{center}
\begin{tikzpicture}[node distance = 20mm]
\node (A) at (0,0) {$\cA$}; \node[right of = A] (C) {$\cB$};
\node[below of = C, yshift = 5mm] (B) {$\cC$};

\draw[right hook->] (A) to node[above] {$\kappa$} (C); \draw
[right hook->] (A) to node[below]{$\iota$} (B); \draw[right hook->,
dashed] (C) to node[right] {$\widehat \kappa$} (B);
\end{tikzpicture}
\end{center}
\end{enumerate}
\end{proposition}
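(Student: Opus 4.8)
The plan is to prove the two equivalences \ref{item:a} $\Leftrightarrow$ \ref{item:c} and \ref{item:a} $\Leftrightarrow$ \ref{item:d}, treating \ref{item:a}---which is precisely the definition of a finitary compatible completion---as the hub. The two implications \emph{out of} \ref{item:a} are immediate applications of \Cref{lemma:key}, so the real content lies in the two converses, where the universal properties \ref{item:c} and \ref{item:d} assert only the \emph{existence} of a mediating embedding, not its uniqueness.

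For \ref{item:a} $\Rightarrow$ \ref{item:c}, assume $\iota\from\cA\hookrightarrow\cC$ is a finitary compatible completion and let $\kappa\from\cA\hookrightarrow\cB$ be any embedding into a finitarily compatibly complete algebra. I would instantiate \Cref{lemma:key} with $\cC$ in the role of its finite-join-dense domain (here $\iota[\cA]$ is indeed finite-join dense) and $\cB$ in the role of its finitarily compatibly complete target, obtaining a well-defined embedding $\widehat\kappa\from\cC\to\cB$ sending $\join\iota(a_i)$ to $\join\kappa(a_i)$; by construction $\widehat\kappa\circ\iota=\kappa$. The implication \ref{item:a} $\Rightarrow$ \ref{item:d} is entirely parallel: given an embedding $\kappa\from\cA\hookrightarrow\cB$ in which $\cA$ is finite-join dense, \Cref{lemma:key} (now with $\cB$ as domain and the complete algebra $\cC$ as target) yields an embedding $\widehat\kappa\from\cB\to\cC$ with $\widehat\kappa\circ\kappa=\iota$.

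For the converses I would compare the given $\iota$ with a genuine finitary compatible completion $\bar\iota\from\cA\hookrightarrow\bar\cC$, which exists by \Cref{p:completion}; since $\bar\iota$ satisfies \ref{item:a}, it also satisfies \ref{item:c} and \ref{item:d} by the implications just established. Assuming \ref{item:c} for $\iota$, applying the universal property of $\cC$ to $\bar\iota$ and that of $\bar\cC$ to $\iota$ produces embeddings $f\from\cC\to\bar\cC$ and $g\from\bar\cC\to\cC$ with $f\circ\iota=\bar\iota$ and $g\circ\bar\iota=\iota$. The crux---and exactly the point where uniqueness of the mediating map is unavailable---is to show a composite is the identity: the endomorphism $f\circ g$ of $\bar\cC$ fixes $\bar\iota[\cA]$ pointwise, and since $\bar\iota[\cA]$ is finite-join dense in $\bar\cC$ and homomorphisms of difference--restriction algebras preserve existing finite joins (\Cref{l:6}), it follows that $f\circ g=\mathrm{id}_{\bar\cC}$. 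Hence $f$ is a surjective embedding, i.e.\ an isomorphism, and transporting finite-join density of $\bar\iota[\cA]$ back along $f$ gives that $\iota[\cA]$ is finite-join dense in $\cC$, establishing \ref{item:a}. The argument for \ref{item:d} $\Rightarrow$ \ref{item:a} is the mirror image: the same two applications give $f$ and $g$, but now the \emph{known} finite-join density of $\iota[\cA]$ in $\cC$ forces $g\circ f=\mathrm{id}_\cC$ via \Cref{l:6}, so $g$ is an isomorphism and $\cC$ inherits finitary compatible completeness from $\bar\cC$. The main obstacle throughout is precisely this replacement of a uniqueness argument by the concrete principle, supplied by \Cref{l:6}, that a homomorphism fixing a finite-join-dense set must be the identity.
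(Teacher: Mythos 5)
Your proof is correct and follows essentially the same strategy as the paper: treat \ref{item:a} as the hub, derive \ref{item:c} and \ref{item:d} by direct applications of \Cref{lemma:key}, and prove the converses by comparing $\cC$ with the canonical completion supplied by \Cref{p:completion}, using \Cref{l:6} together with finite-join density to force the relevant composite to be the identity. The only cosmetic difference is in \ref{item:c} $\Rightarrow$ \ref{item:a}, where the paper shows surjectivity of the single mediating embedding directly rather than constructing maps in both directions, but the ingredients and the underlying argument are the same.
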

\begin{proof} 
For \ref{item:a} $\implies$ \ref{item:c}, apply \Cref{lemma:key} to
$\iota$ and $\kappa$, using join density of $\iota[\algebra A]$ in
$\algebra C$. For \ref{item:a} $\implies$ \ref{item:d}, apply
\Cref{lemma:key} to $\kappa$ and $\iota$, using finitary compatible
completeness of $\algebra C$.

For the reverse implications, let $\kappa \from \algebra A \hookrightarrow
\algebra B$ be the finitary compatible completion of $\cA$ (whose existence is
guaranteed by \Cref{p:completion}).

If \ref{item:c} holds for $\algebra C$, then it can be applied to
$\kappa \from \algebra A \hookrightarrow \algebra B$. Then
$\widehat\kappa$ is surjective. Indeed, since $\kappa[\algebra A]$
is finite-join dense in $\algebra B$, each $b \in \cB$ may be
written as a finite join $b = \join_{i = 1}^n \kappa(a_i)$, for some
$\{a_1, \dots, a_n\} \subseteq \cA$. Then since $\cC$ is finitarily
compatibly complete, the join $\join_{i = 1}^n \iota(a_i)$ exists in
$\cC$ and, by \Cref{l:6}, we have
\[b = \join_{i = 1}^n \kappa(a_i) = \widehat k(\join_{i = 1}^n
\iota(a_i)).\]
Hence the embedding $\widehat\kappa$ is in fact an isomorphism,
and so~\ref{item:a} holds.

Similarly, if~\ref{item:d} holds for $\algebra C$, then apply it
to $\kappa \from \algebra A \hookrightarrow \algebra B$ to obtain
$\widehat\kappa \from \algebra B \hookrightarrow \algebra C$. Then
as the image of $\algebra A$ is also finite-join dense in
$\algebra B$, by \Cref{lemma:key}, we have $\widehat\iota \from
\algebra C \hookrightarrow \algebra B$ commuting with the
embedding of $\algebra A$. The composition  $\widehat\kappa \circ
\widehat\iota$   is a homomorphism fixing the embedded copy of
$\algebra A$ (which is finite-join dense), and hence by \Cref{l:6},  $\widehat\kappa \circ
\widehat\iota$
is the identity on $\cC$. Similarly  $\widehat\iota \circ \widehat\kappa$ is the identity on $\cB$. So $\widehat\kappa$ is an
isomorphism and \ref{item:a} holds.
\end{proof}

\section{An adjunction for difference--restriction algebras with
operators}\label{sec:operators}

In this section we extend the adjunction, completion, and duality
results of the previous two sections to results allowing the algebras
to be equipped with arbitrary additional operators
respecting the compatibility structure. Unless specified otherwise,
let $\algebra A$ be a difference--restriction algebra.

First we introduce the class of operations we are interested in.

\begin{definition}\label{def:compatibility-preserving}
Let $\Omega$ be an $n$-ary operation on $\algebra A$. Then $\Omega$
is \defn{compatibility preserving} if whenever $a_i, a'_i$ are
compatible, for all $i$, we have that $\Omega(a_1, \dots, a_n)$ and
$\Omega(a'_1, \dots, a'_n)$ are compatible.

The operation $\Omega$ is an \defn{operator} if:
\begin{enumerate}
\item
$\Omega$ is \defn{normal}:
for any $i$ and any $a_1, \dots, a_{i-1}, a_{i+1}, \dots, a_n \in
\algebra A$, we have 
\begin{equation}\label{normal}
\Omega(a_1, \dots, a_{i-1}, \allowbreak 0,
a_{i+1}, \dots, a_n) = 0,
\end{equation}

\item
$\Omega$ is \defn{additive}:  for any $i$ and any $a_1, \dots, a_{i-1}, a_{i+1}, \dots, a_n, b, c \in
\algebra A$, if the join $b + c$ of exists, we have
\begin{equation}\label{additive}\begin{split}
&\Omega(a_1, \dots, a_{i-1}, \allowbreak b \bjoin c,
a_{i+1}, \dots, a_n) = \\&\Omega(a_1,
\dots, a_{i-1}, b, a_{i+1}, \dots, a_n) + \Omega(a_1,
\dots, a_{i-1}, c, a_{i+1}, \dots, a_n).
\end{split}
\end{equation}
\end{enumerate}
\end{definition}

This definition of an operator (making it not just a synonym for `operation') is standard in the literature, and dates back at least to J\'{o}nsson and Tarski's paper ``Boolean Algebras with Operators.~Part I'' \cite{1951}.

The algebraic categories we consider in this section take the following form, for a functional signature $\sigma$ (disjoint from $\{-, \rest\}$).

\begin{definition}
The category $\ralg(\sigma)$ has
\begin{itemize}
\item
objects: algebras of the signature $\{-, \rest\} \cup \sigma$ whose $\{-, \rest\}$-reduct is a difference--restriction algebra, and such that the symbols of $\sigma$ are interpreted as compatibility preserving operators;
\item morphisms: homomorphisms of $(\{-, \rest\} \cup
\sigma)$-algebras.
\end{itemize}
\end{definition}

For reference, we will briefly list concrete operations $\Omega$ on
partial functions according to whether or not they are compatibility
preserving operators---by which we mean the 
representable $\{-, \rest, \Omega\}$-algebras are a (necessarily full)
subcategory of $\ralg(\{\Omega\})$. 
Verifying that such inclusions hold is a simple matter, achieved by
checking the operation is a compatibility preserving operator for actual algebras of partial functions.

We can list the following compatibility preserving operators from the literature: \emph{composition} (usually denoted $\compo$), the unary $\D$ (\emph{domain}), $\R$ (\emph{range}), and $\F$ (\emph{fixset}) operations (the identity function restricted, respectively, to the domain, range, and fixed points of the argument \cite{hirsch}), the constant $1$ (\emph{identity}), and the binary $\vartriangleleft$ (\emph{range restriction}). \emph{Intersection} is of course already expressible in our base signature. 

The signature obtained by adding composition to $\{-, \rest\}$ has been studied by Schein and the representation class axiomatised under the name of \emph{difference semigroups} \cite{schein1992difference}. The signature obtained by adding composition, domain, and identity is term equivalent to $\{\compo, \bmeet, \A\}$, for which the representation class is axiomatised in \cite{DBLP:journals/ijac/JacksonS11}. Adding range to this last signature we obtain $\{\compo, \bmeet, \A, \R\}$, and the representation class is axiomatised in \cite{hirsch}. 

Operations that fail to be compatibility preserving operators usually do so because they are not even order-preserving. We can list: \emph{antidomain} (identity function restricted to complement of domain) and its range analogue \emph{antirange}, similar negative versions of $\rest$ and $\vartriangleleft$, which we might call \emph{antidomain restriction} and \emph{antirange restriction} (antidomain restriction is called \emph{minus} in \cite{BERENDSEN2010141}), \emph{override} (also known as \emph{preferential union}) and \emph{update} (see \cite{BERENDSEN2010141}), \emph{maximal iterate} (see \cite{jackson2021restriction}) and \emph{opposite} (converse restricted to points with a unique preimage \cite{finiterep}). \emph{Converse} is an operation that \emph{is} an operator but fails to be compatibility preserving. An interesting future project would be to extend the results of this section in a way that encompasses some of these operations, in particular the several that are either order-preserving or order-reversing in each coordinate. (An analogous extension of the theory of Boolean algebras with operators can be found in \cite{GEHRKE2001345}.)

Starting with an 
$n$-ary operation $\Omega$, we can define an $(n+1)$-ary relation
$R_\Omega$ on the maximal filters of $\algebra A$
by \begin{equation}\label{make_relation}R_\Omega \mu_1{\dots} \mu_{n+1}
\iff \{\Omega(a_1, \dots, a_n) \mid a_i \in \mu_i, 1\leq i \leq n\}\subseteq \mu_{n+1}.\end{equation}

Next we introduce a class of relations on Hausdorff \'etale spaces, which will turn out to be precisely the relations obtained from \emph{compatibility preserving operators} in the way just described.

\begin{definition}
Let $\pi \from X \twoheadrightarrow
X_0$ be a Hausdorff \'etale space, and let $R$ be an $(n+1)$-ary relation on $X$. 
The
\defn{compatibility relation} $C \subseteq X \times X$ is given by $x C y$
if and only if  $\pi(x) = \pi(y) \implies x = y$. Then $R$ has
the \defn{compatibility property} (with respect to~$\pi$) if given
$x_1C x'_1, \dots ,\ x_nC x'_n$ and $Rx_1{\dots} x_{n+1}$ and
$Rx'_1{\dots} x'_{n+1}$, we have $x_{n+1}C x'_{n+1}$.
\end{definition}

Let $X$ be any topological space, and let $R$ be an $(n+1)$-ary relation on $X$. Given subsets $S_1,\dots, S_n$ of $X$, we define $\Omega_R(S_1, \dots, S_n)$ by
\begin{equation}\label{make_operation}
\Omega_R(S_1, \dots, S_n) \coloneqq \{x_{n+1} \in X \mid \exists x_1\in S_1, \dots, x_n \in S_n : Rx_1{\dots} x_{n+1}\} 
\end{equation}

\begin{definition}
Let $X$ be a topological space, and let $R$ be an $(n+1)$-ary relation on $X$.

We say the relation $R$ is 
\defn{continuous} if whenever $S_1,\dots, S_n \subseteq X$ are open sets, then $\Omega_R(S_1, \dots, S_n)$ is an open set.
We say a continuous relation is   \defn{spectral} if whenever $S_1,\dots, S_n \allowbreak\subseteq X$ are compact open sets, then $\Omega_R(S_1, \dots, S_n)$ is a compact open set.

We say the relation $R$ is \defn{tight} if, for each $x_1,\dots,x_{n+1} \in X$, the condition \begin{equation}\label{eq:tight}\forall S_1,\dots,S_n \text{ compact and open }(x_1 \in S_n, \dots, x_n \in S_n \implies x_{n+1} \in \Omega_R(S_1, \dots, S_n))\end{equation} implies $R x_1, \dots, x_{n+1}$.
\end{definition}

Given an $(n+1)$-ary relation $R$ on a Hausdorff étale space that has the compatibility property and is spectral,  $\Omega_R$ defines an $n$-ary operation on the dual $\algebra A^{\mathrm {co}}_\pi$ of $\pi
\from X \twoheadrightarrow X_0$. 
Indeed, 
suppose $S_1, \dots, S_n \in \algebra A^{\mathrm {co}}_\pi$. If $x_{n+1}, x'_{n+1} \in
\Omega_R(S_1, \dots, S_n)$, then there exist $x_1\in S_1, \dots, x_n
\in S_n$ with $Rx_1{\dots} x_{n+1}$, and $x'_1\in S_1, \dots, x'_n \in
S_n$ with $Rx'_1{\dots} x'_{n+1}$. For each $i$, since $\pi$ is injective on $S_i$, we have $x_iCx'_i$. Then by the
hypothesis that $R$ has the compatibility property, if $x_{n+1}$ and
$x'_{n+1}$ lie in the same fibre they are equal. So $\pi$ is injective on $\Omega_R(S_1, \dots, S_n)$. 
Since $R$ is spectral,  $\Omega_R(S_1, \dots, S_n)$ is compact and open. Therefore, $\Omega_R$
is a well-defined operation on~$\cA^{\mathrm {co}}_\pi$.

Finally, we define conditions that morphisms of Hausdorff \'etale spaces are
required to satisfy, when those Hausdorff \'etale spaces are equipped with
additional relations.

\begin{definition}
Take a partial function $\varphi \from X \rightharpoonup Y$ and
$(n+1)$-ary relations $R_X$ and $R_Y$ on $X$ and $Y$. Then $\varphi$
satisfies the \defn{reverse forth condition} if whenever
$R_Xx_1{\dots} x_{n+1}$ and $\varphi(x_1), \dots, \varphi(x_n)$ are
defined, then $\varphi(x_{n+1})$ is defined and
$R_Y\varphi(x_1){\dots} \varphi(x_{n+1})$. The partial map $\varphi$
satisfies the \defn{back condition} if whenever $\varphi(x_{n+1})$
is defined and $R_Yy_1{\dots} y_n\varphi(x_{n+1})$, then there exist
$x_1, \dots, x_n \in \dom(\varphi)$ such that $\varphi(x_1) = y_1,
\dots, \varphi(x_n) = y_n$ and $R_Xx_1{\dots} x_{n+1}$.
\end{definition}

We are now ready to extend the adjunction $F \from \ralg \dashv \hauset^{\operatorname{op}} \,\from\! G$ to algebras with operators. We fix a functional signature $\sigma$ (disjoint from $\{-, \rest\}$). We have already defined $\ralg(\sigma)$.

\begin{definition}
The category $\hauset(\sigma)$ has
\begin{itemize}
\item objects: the objects of $\hauset$ equipped with, for each $\Omega
\in \sigma$, an $(n+1)$-ary tight spectral relation $R_\Omega$ that has the
compatibility property, where $n$ is the arity of~$\Omega$;
\item
morphisms: morphisms of $\hauset$ that satisfy the reverse forth condition and the back condition with respect to $R_\Omega$, for every $\Omega \in \sigma$.
\end{itemize}
\end{definition}

We are required to note at this point that both the reverse forth
condition and the back condition are preserved by composition of
partial maps (and are also satisfied by identity maps); hence
$\hauset(\sigma)$ is indeed a category.

\begin{theorem}\label{thm:expansion}
There is an adjunction $F' \from \ralg(\sigma) \dashv \hauset(\sigma)^{\operatorname{op}} \,\from\! G'$ that extends the adjunction $F \dashv G$ of \Cref{sec:duality} in the sense that the appropriate reducts of $F'(\algebra A)$ and $G'(\pi \from X \twoheadrightarrow X_0)$ equal $F(\algebra A)$ and $G(\pi \from X \twoheadrightarrow X_0)$, respectively.
\end{theorem}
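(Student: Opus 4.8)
The plan is to let $F'$ and $G'$ agree with $F$ and $G$ on $\{-,\rest\}$-reducts and on all morphisms, augmenting only the extra data. On objects, $F'(\cA)$ is $F(\cA)$ equipped, for each $\Omega \in \sigma$, with the relation $R_\Omega$ of \eqref{make_relation}, and $G'(\pi)$ is $G(\pi) = \ko\pi$ equipped with the operations $\Omega_R$ of \eqref{make_operation}; on morphisms we set $F'h = Fh$ and $G'\varphi = G\varphi$, and for the two natural transformations we reuse $\eta$ and $\lambda$ from \Cref{t:adj}. Since the underlying maps are untouched, functoriality of $F',G'$ and the triangle identities \eqref{eq:1} transfer verbatim from \Cref{t:adj}; the entire content of the theorem therefore reduces to well-definedness, namely: that the augmented objects lie in $\hauset(\sigma)$ and $\ralg(\sigma)$; that $Fh$ satisfies the reverse forth and back conditions while $G\varphi$ remains a $\sigma$-homomorphism; and that $\eta_\cA$ and $\lambda_\pi$ are morphisms of the expanded categories.

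The technical heart of all these verifications is the single correspondence formula
\[\widehat{\Omega(a_1,\dots,a_n)} = \Omega_{R_\Omega}\bigl(\widehat{a_1},\dots,\widehat{a_n}\bigr),\]
valid for each operator $\Omega$ on $\cA$ and all $a_i \in \cA$. That $\Omega_{R_\Omega}(\widehat{a_1},\dots,\widehat{a_n}) \subseteq \widehat{\Omega(a_1,\dots,a_n)}$ is immediate from the definition \eqref{make_relation} of $R_\Omega$ (take $b_i = a_i$). The reverse inclusion is a J\'onsson--Tarski-style existence lemma: given a maximal filter $\mu_{n+1} \ni \Omega(a_1,\dots,a_n)$, I would build witnessing maximal filters $\mu_i \ni a_i$ with $R_\Omega \mu_1 \cdots \mu_{n+1}$ one coordinate at a time, using Zorn's lemma to take a filter containing $a_i$ that is maximal among those keeping $\Omega$ inside $\mu_{n+1}$ as the remaining coordinates range over the already-constructed filters. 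Normality \eqref{normal} feeds the base of this construction, and additivity \eqref{additive}, combined with the maximal-filter characterisation of \Cref{p:2}, is exactly what forces each Zorn-maximal such filter to be an honest maximal filter. I expect this existence lemma to be the main obstacle of the whole proof.

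Granting the formula, the object-level claims for $F'$ follow quickly. As the compact opens of $\pf(\cA)$ are the finite unions of basic sets $\widehat a$ (\Cref{lemma:compact}) and $\Omega_{R_\Omega}$ distributes over unions in each argument, the formula shows $\Omega_{R_\Omega}(\widehat{a_1},\dots,\widehat{a_n}) = \widehat{\Omega(a_1,\dots,a_n)}$ is again compact open, giving spectrality, while the easy inclusion alone verifies the implication \eqref{eq:tight} defining tightness. The compatibility property of $R_\Omega$ is the one place where compatibility preservation (rather than mere operator-hood) enters: one translates the point-level relation $C$ into element-level compatibility via agreement on fibres and applies the formula. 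Dually, for $G'$ on objects it was already noted that $\Omega_R$ is a well-defined compatibility-preserving operation on $\ko\pi$; normality and additivity are immediate from the existential definition \eqref{make_operation}, since an empty argument admits no witnesses and compatible joins in $\ko\pi$ are unions (\Cref{l:5}). For morphisms, $Fh$ satisfies the reverse forth condition by a direct unfolding using that $h$ preserves each $\Omega$, whereas the back condition for $Fh$ is a relative existence (``zig-zag'') argument in the spirit of the existence lemma, leaning on the correspondence formula and the fibrewise surjectivity of $Fh$ established in \Cref{sec:F}. That $G\varphi = \varphi^{-1}(\,\cdot\,)$ preserves each $\Omega_R$ amounts to the identity $\varphi^{-1}(\Omega_{R_Y}(U_1,\dots,U_n)) = \Omega_{R_X}(\varphi^{-1}(U_1),\dots,\varphi^{-1}(U_n))$, whose two inclusions are exactly the back and the reverse forth conditions for $\varphi$.

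Finally, $\eta_\cA$ preserves each $\Omega$ precisely because this is the correspondence formula read through $\eta_\cA(a) = \widehat a$, so $\eta$ is a natural transformation valued in $\ralg(\sigma)$. For $\lambda_\pi$ the reverse forth condition is immediate from the definition \eqref{make_operation} of $\Omega_R$, with spectrality and the compatibility property guaranteeing the target filter is defined; the back condition is the second genuine obstacle, requiring one to extract, from the filtered family of witness tuples supplied by $R_{\Omega_R}\,\nu_1 \cdots \nu_n\, \lambda_\pi(x_{n+1})$, a single common witness by a compactness argument on fibres, and then to invoke tightness of $R$ to conclude $R x_1 \cdots x_n x_{n+1}$. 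Once $\eta$ and $\lambda$ are confirmed to be morphisms of the expanded categories, the triangle identities \eqref{eq:1} inherited from \Cref{t:adj} yield $F' \dashv G'$, and the reducts agree with $F$ and $G$ by construction.
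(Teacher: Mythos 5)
Your overall architecture coincides with the paper's: $F'$ and $G'$ augment $F$ and $G$ via \eqref{make_relation} and \eqref{make_operation}, the underlying maps and hence functoriality and the triangle identities \eqref{eq:1} transfer verbatim from \Cref{t:adj}, and the entire burden is well-definedness of objects, morphisms, $\eta$ and $\lambda$ --- the paper organises exactly these checks as \Cref{l:10}--\Cref{l:14}, with the identity $\widehat{\Omega(a_1,\dots,a_n)} = \Omega_{R_\Omega}(\widehat{a_1},\dots,\widehat{a_n})$ as the pivot, just as in your plan (your remark that tightness needs only the easy inclusion is in fact a small sharpening of the paper, which invokes the full equality). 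The one genuinely different route is your proof of the hard inclusion: you propose a J\'onsson--Tarski existence lemma, building the witnessing maximal filters coordinate-by-coordinate by Zorn's lemma and using additivity plus \Cref{p:2} to see that a Zorn-maximal invariant-preserving filter is an honest maximal filter. The paper (\Cref{l:10.2}) instead argues by contradiction: compactness of the sets $\widehat{a}$ reduces an infinite family of counterexample elements to finitely many $a_n^1,\dots,a_n^k$ with join $a_n$, and additivity together with primality of the maximal filter collapses them, coordinate after coordinate. Both work; yours is the standard BAO-style argument (needing extra care since these algebras have only relative complements), while the paper's is shorter given that \Cref{lemma:compact} is already in hand.

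Two of your glosses deserve a caution. First, the back condition for $Fh$ is where the paper actually deploys the coordinate-by-coordinate filter construction (\Cref{l:11}): one maintains the invariant that $\Omega^{\cB}$ applied to the current lists lands in $\nu_{n+1}$, and extends each list to an ultrafilter of the Boolean algebra $h(a_i)^{\downarrow}$ using additivity and primality. Your hint that this ``leans on fibrewise surjectivity of $Fh$'' would not carry the weight: fibrewise surjectivity produces preimage filters with no control over the relation $R_{\cB}$, so a fresh relative construction is unavoidable (your alternative description of it as a zig-zag existence argument is the right one). Second, the compatibility property of $R_\Omega$ (\Cref{l:10}) needs a concrete device you leave implicit: from $\mu_i \mathrel{C} \nu_i$ with $\mu_i \not\approx \nu_i$ one must manufacture compatible representatives, which the paper does by choosing $a \in \mu_i$, $b \in \nu_i$ with $a \rest b \notin \nu_i$ and replacing $b$ by $b - (a \rest b)$. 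Neither point invalidates your plan, but these are the places where your sketch still requires real work to become a proof.
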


The proof of this theorem takes up most of the remainder of the
paper. The reader is likely to have understood by now how to form $F'$
and $G'$. For $F'$: given a difference--restriction algebra $\algebra A$ equipped with compatibility preserving operators indexed by $\sigma$, take $F(\algebra
A)$ and equip it with, for each operator $\Omega$ on $\algebra A$,
the relation $R_\Omega$ defined according to
\eqref{make_relation}. For $G'$: given a Hausdorff \'etale space $\pi \from X
\twoheadrightarrow X_0$ equipped with relations, take $G(\pi \from X
\twoheadrightarrow X_0)$ and equip it with, for each relation $R$, the
operation $\Omega_R$ defined according to \eqref{make_operation}. The
proof consists therefore of establishing the following facts.

\begin{enumerate}
\item The $F'$ we wish to define is well-defined on objects. That is,
the defined relations $R_\Omega$ have the compatibility property (\Cref{l:10}) and are tight and spectral
(\Cref{l:10.2}).

\item
$F'$ is well-defined on morphisms. That is, for a morphism $h$ in
$\ralg$ that preserves additional operations, the partial map $Fh$
satisfies the reverse forth and the back conditions with respect to
each pair of relations (\Cref{l:11}).

\item
$G'$ is well-defined on objects: each defined $\Omega_R$ is a
compatibility preserving operator (\Cref{l:12}).

\item
$G'$ is well-defined on morphisms: given a morphism $H$ in $\hauset$,
the defined operations are preserved by $G\varphi$ (\Cref{l:13}).

\item The unit and counit used in \Cref{t:adj} are still permitted
families of morphisms. That is, for each algebra $\algebra A$, the
map $\eta_{\algebra A}$ preserves the additional operators, and for
each Hausdorff \'etale spaces $\pi$, the map $\lambda_\pi$ satisfies the reverse
forth condition and the back condition (\Cref{l:14}).
\end{enumerate}

\begin{lemma}\label{l:10}
If an operation $\Omega$ is compatibility preserving, then $R_\Omega$ has the compatibility property.
\end{lemma}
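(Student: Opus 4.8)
The plan is to reduce the whole lemma to a single characterisation of the \'etale-space compatibility relation $C$ on $X = \pf(\cA)$ in terms of the algebraic compatibility of \Cref{algebra_compatibility}, and then feed compatible witnesses into the hypothesis that $\Omega$ is compatibility preserving. Concretely, I will establish the following equivalence for maximal filters $\mu,\nu$, whose two directions I call \emph{forward} and \emph{backward}:
\[
\mu \mathrel{C} \nu \quad\Longleftrightarrow\quad \text{there exist compatible } a \in \mu \text{ and } a' \in \nu.
\]
Recall here that $\mu \mathrel{C} \nu$ unfolds to ``$\pi_\cA(\mu) = \pi_\cA(\nu) \implies \mu = \nu$'', and that $\pi_\cA(\mu)=\pi_\cA(\nu)$ is exactly $\mu \approx_\cA \nu$.

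Granting this equivalence, the lemma follows quickly. Suppose $\mu_i \mathrel{C} \mu'_i$ for $1 \le i \le n$, and $R_\Omega\mu_1\cdots\mu_{n+1}$ and $R_\Omega\mu'_1\cdots\mu'_{n+1}$. For each $i$ I apply the forward direction to pick compatible $a_i \in \mu_i$ and $a'_i \in \mu'_i$. Since $\Omega$ is compatibility preserving, $\Omega(a_1,\dots,a_n)$ and $\Omega(a'_1,\dots,a'_n)$ are compatible, and by the defining condition \eqref{make_relation} they lie in $\mu_{n+1}$ and $\mu'_{n+1}$ respectively. The backward direction then yields $\mu_{n+1} \mathrel{C} \mu'_{n+1}$, which is what the compatibility property demands.

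For the backward direction I would argue through the representation $\eta_\cA \from a \mapsto \widehat a$ of $\cA$ inside $\ko{\pi_\cA}$. As an embedding preserves and reflects $\rest$, elements $a,a'$ are compatible in $\cA$ exactly when $\widehat a,\widehat{a'}$ are compatible in $\ko{\pi_\cA}$, which (as noted in the proof of \Cref{l:5}) means precisely that $\pi_\cA$ is injective on $\widehat a \cup \widehat{a'}$. Since $\mu \in \widehat a$ and $\nu \in \widehat{a'}$, the implication $\pi_\cA(\mu)=\pi_\cA(\nu)\implies \mu=\nu$ is immediate, i.e.\ $\mu \mathrel{C}\nu$.

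The forward direction is the main obstacle, as from $\mu \mathrel{C}\nu$ one must \emph{manufacture} a genuinely compatible pair. If $\mu \approx_\cA \nu$, then $\mu \mathrel{C}\nu$ forces $\mu = \nu$, and any $a \in \mu$ paired with itself works. If instead $\mu \not\approx_\cA \nu$, then by \eqref{eq:filter_equivalence} there are $a \in \mu$ and $b \in \nu$ with $a \rest b \notin \nu$. I then set $a' \coloneqq b - (a \rest b)$: applying \Cref{p:2} to $b \in \nu$ and the element $a \rest b$, exactly one of $b \cdot (a \rest b)$ and $b - (a \rest b)$ lies in $\nu$; the former does not, since it lies below $a \rest b \notin \nu$, so $a' \in \nu$. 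Finally, since $\cA$ is (isomorphic to) an algebra of partial functions, a direct computation shows $\dom(a') = \dom(b)\setminus\dom(a)$ is disjoint from $\dom(a)$, whence $a \rest a' = 0 = a' \rest a$ and $a,a'$ are compatible. This domain-disjointness check is the only delicate point, and it is routine once $\cA$ is viewed concretely as partial functions; with it, the forward direction, the equivalence, and hence the lemma are complete.
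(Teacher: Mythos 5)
Your proof is correct. The skeleton---manufacture compatible witnesses $a_i \in \mu_i$, $a'_i \in \nu_i$, apply compatibility preservation of $\Omega$, then transfer compatibility of $\Omega(a_1,\dots,a_n)$ and $\Omega(a'_1,\dots,a'_n)$ back to the filters $\mu_{n+1}$, $\nu_{n+1}$---is exactly the paper's, and your forward direction (the case split on $\mu_i \approx_\cA \nu_i$, the witness $b - (a \rest b)$ justified via \Cref{p:2}, and the domain-disjointness check that the paper leaves implicit) coincides with the paper's construction. Where you genuinely diverge is the concluding step: the paper proves that $\approx_\cA$-equivalent maximal filters containing compatible elements must be equal by a purely filter-theoretic computation (both filters contain $c \coloneqq \Omega(a_1, \dots, a_n) \rest \Omega(a'_1, \dots, a'_n)$, and then closure under meets, \eqref{eq:filter_equivalence}, the identity $(a \cdot c) \rest c = a \cdot c$, and maximality yield $\mu_{n+1} = \nu_{n+1}$), whereas you route through the representation $\eta_\cA$ and the geometric characterisation of compatibility in $\ko{\pi_\cA}$ (injectivity of $\pi_\cA$ on $\widehat a \cup \widehat{a'}$, as noted in the proof of \Cref{l:5}). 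Your route is shorter and isolates a clean reusable equivalence ($\mu \mathrel{C} \nu$ iff $\mu$ and $\nu$ contain compatible elements), but it imports the machinery of \Cref{l:2}, \Cref{l:5}, and the fact that $\eta_\cA$ is an embedding; the paper's computation is self-contained and stays entirely inside the filter algebra. There is no circularity in your imports, since neither \Cref{l:5} nor \Cref{t:adj} depends on \Cref{l:10}, so both arguments are valid.
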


\begin{proof}
Take $\mu_1 C\nu_1, \dots ,\mu_nC\nu_n$ and $R_\Omega \mu_1\dots \mu_{n+1}$ and
$R_\Omega \nu_1\dots \nu_{n+1}$. For each $1 \le i \le n$, if $\mu_i \not\approx \nu_i$ then we can find $a_i \in \mu_i$ and $b_i \in \nu_i$ such that these are compatible elements of $\algebra A$: indeed, if there exist $a \in \mu$ and $b \in \nu$ such that $a \rest b \not\in \nu$ then we have $b - (a \rest b) \in \nu$ and we can set $a_i \coloneqq a$ and $b_i \coloneqq b - (a \rest b)$. If, on the other hand, $\mu_i \approx \nu_i$, then $\mu_i = \nu_i$ and we arbitrarily choose some element of this maximal filter to be both $a_i$ and $b_i$, so again these two elements of $\algebra A$ are compatible. 

Now since $\Omega$ is compatibility preserving, $\Omega(a_1, \dots, a_n)$ and $\Omega(b_1,
\dots, b_n)$ are compatible. We have, by the hypotheses and the
definition of $R_\Omega $, that $\Omega(a_1, \dots, a_n) \in 
\mu_{n+1}$ and $\Omega(b_1, \dots, b_n) \in \nu_{n+1}$. To see that $\mu_{n+1}C\nu_{n+1}$, assume  $\mu_{n+1}\approx\nu_{n+1}$. Then both $\mu_{n+1}$ and $\nu_{n+1}$ contain the element $c \coloneqq \Omega(a_1, \dots, a_n) \rest \Omega(b_1, \dots, b_n) = \Omega(b_1, \dots, b_n) \rest \Omega(a_1, \dots, a_n)$. To see that $\mu_{n+1} \subseteq \nu_{n+1}$, let $a \in \mu_{n+1}$. Then $a \cdot c \in \mu_{n+1}$. By $\mu_{n+1}\approx\nu_{n+1}$, we obtain $(a \cdot c) \rest c \in \nu_{n+1}$. But $(a \cdot c) \rest c = a \cdot c$, so $a \cdot c \in \nu_{n+1}$; hence $a \in \nu_{n+1}$. We have established the inclusion $\mu_{n+1} \subseteq \nu_{n+1}$, and hence by maximality the equality $\mu_{n+1} = \nu_{n+1}$, as required. 
\end{proof}

\begin{lemma}\label{l:10.2}
If an operation $\Omega$ is an operator, then $R_\Omega$ is tight and spectral.
\end{lemma}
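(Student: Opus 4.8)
The plan is to reduce both properties to the single identity
\[
\Omega_R\bigl(\widehat{a_1}, \dots, \widehat{a_n}\bigr) = \widehat{\Omega(a_1, \dots, a_n)} \qquad (a_1, \dots, a_n \in \cA),
\]
where $R = R_\Omega$ on the space $\pf(\cA)$. Granting this, tightness is almost immediate: given $\mu_1, \dots, \mu_{n+1}$ satisfying the hypothesis of~\eqref{eq:tight}, fix arbitrary $b_i \in \mu_i$; each $\widehat{b_i}$ is compact open (\Cref{lemma:compact}) and contains $\mu_i$, so $\mu_{n+1} \in \Omega_R(\widehat{b_1}, \dots, \widehat{b_n})$, and by~\eqref{make_operation} there are witnesses $\nu_i \ni b_i$ with $R_\Omega \nu_1 \dots \nu_n \mu_{n+1}$; then~\eqref{make_relation} gives $\Omega(b_1, \dots, b_n) \in \mu_{n+1}$. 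As the $b_i$ were arbitrary, $R_\Omega \mu_1 \dots \mu_{n+1}$. (Tightness thus needs only the trivial inclusion $\subseteq$ and \Cref{lemma:compact}, not the operator axioms.) For spectrality, observe that~\eqref{make_operation} distributes over unions in each coordinate, so by \Cref{lemma:compact} every $\Omega_R(S_1, \dots, S_n)$ with compact open $S_i = \bigcup_j \widehat{a_{i,j}}$ is a finite union of sets $\Omega_R(\widehat{a_{1,j_1}}, \dots, \widehat{a_{n,j_n}})$; by the identity each equals $\widehat{\Omega(\dots)}$, which is compact open (\Cref{lemma:compact}), and so is the union.

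The inclusion $\subseteq$ of the identity is immediate from the definitions: witnesses $\mu_i \ni a_i$ with $R_\Omega \mu_1 \dots \mu_n \mu_{n+1}$ force $\Omega(a_1, \dots, a_n) \in \mu_{n+1}$. The real work is the reverse inclusion, a J\'onsson--Tarski style witness-existence argument: assuming $\Omega(a_1, \dots, a_n) \in \mu_{n+1}$, I must produce maximal filters $\mu_i \ni a_i$ with $\Omega(b_1, \dots, b_n) \in \mu_{n+1}$ for all $b_i \in \mu_i$. I would build $\mu_1, \dots, \mu_n$ one coordinate at a time, maintaining the invariant that $\Omega(b_1, \dots, b_k, a_{k+1}, \dots, a_n) \in \mu_{n+1}$ whenever $b_1 \in \mu_1, \dots, b_k \in \mu_k$. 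At step $k$, Zorn's lemma applied to the family of filters $F \ni a_k$ such that $\Omega(b_1, \dots, b_{k-1}, c, a_{k+1}, \dots, a_n) \in \mu_{n+1}$ for all $b_j \in \mu_j$ ($j<k$) and all $c \in F$ yields a maximal such $\mu_k$; the principal filter $a_k^{\uparrow}$ lies in the family by the invariant and monotonicity of $\Omega$ in the $k$-th coordinate (additive operators being monotone where the relevant joins exist), and the family is closed under unions of chains.

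The crux is verifying that this $\mu_k$ is a genuine maximal filter of $\cA$. Properness ($0 \notin \mu_k$) uses normality. For maximality I invoke \Cref{p:2}: if some $x \in \mu_k$ and $y \in \cA$ had neither $x \bmeet y$ nor $x - y$ in $\mu_k$, then maximality of $\mu_k$ in the family, after meeting the finitely many parameters arising from the two proper extensions together inside the $\mu_j$ and $\mu_k$, supplies a single $w \in \mu_k$ below $x = (x \bmeet y) \bjoin (x - y)$ with both $\Omega(\vec b, w \bmeet (x\bmeet y), \vec a)$ and $\Omega(\vec b, w \bmeet (x - y), \vec a)$ outside $\mu_{n+1}$; additivity in coordinate $k$ writes $\Omega(\vec b, w, \vec a)$ as their join, while $w \in \mu_k$ forces $\Omega(\vec b, w, \vec a) \in \mu_{n+1}$. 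Iterating the construction through all $n$ coordinates yields the invariant for $k = n$, which is exactly $R_\Omega \mu_1 \dots \mu_n \mu_{n+1}$.

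The main obstacle is the final step, namely the \emph{primeness} of $\mu_{n+1}$ with respect to existing binary joins, which in the Boolean setting is just the fact that ultrafilters are prime. Here there is no global complement, so I would argue inside the Boolean algebra $u^{\downarrow}$ for $u = \Omega(\vec b, w, \vec a) \in \mu_{n+1}$: writing $u = u_1 \bjoin u_2$ with $u_1, u_2 \le u$, the inequality $u - u_1 \le u_2$ holds in $u^{\downarrow}$, and applying \Cref{p:2} to $u \in \mu_{n+1}$ and $u_1$ shows that either $u_1 \in \mu_{n+1}$ or $u - u_1 \in \mu_{n+1}$, whence $u_2 \in \mu_{n+1}$. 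This delivers the contradiction and completes the reverse inclusion, and hence the lemma.
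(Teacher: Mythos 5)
Your proof is correct, and it shares the paper's skeleton---everything is reduced to the identity $\Omega_{R_\Omega}(\widehat{a_1},\dots,\widehat{a_n}) = \widehat{\Omega(a_1,\dots,a_n)}$---but your argument for the hard inclusion $\widehat{\Omega(a_1,\dots,a_n)} \subseteq \Omega_{R_\Omega}(\widehat{a_1},\dots,\widehat{a_n})$ is genuinely different from the paper's. The paper argues topologically and by contradiction: if some $\nu \in \widehat{\Omega(a_1,\dots,a_n)}$ had no witnessing tuple of filters, then each tuple yields elements $a_{\mu_i} \le a_i$ with $\Omega(a_{\mu_1},\dots,a_{\mu_n}) \notin \nu$; compactness of the sets $\widehat{a_i}$ (\Cref{lemma:compact}) extracts finite families of such elements joining to $a_i$, and additivity together with primality of $\nu$ propagates ``$\notin \nu$'' up to $\Omega(a_1,\dots,a_n)$, a contradiction. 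You instead run the classical J\'onsson--Tarski witness construction: Zorn's lemma coordinate-by-coordinate on a family of filters satisfying your invariant, the crux being that a maximal member of the family is a maximal filter of $\cA$, which you verify through \Cref{p:2} using normality (for properness), additivity (to split $w$ along $x \bmeet y$ and $x - y$), and primality of $\mu_{n+1}$, which you correctly derive from \Cref{p:2} inside the Boolean algebra $u^\downarrow$---the same property the paper invokes by citation in the proof of \Cref{l:11}. Your route is the standard BAO-style argument; it makes the role of each operator axiom explicit and replaces the paper's somewhat sketchy multi-coordinate induction (``continuing this inductive argument'') with a cleanly stated invariant, at the cost of a Zorn-style construction that the paper's compactness argument avoids. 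Your observation that tightness needs only the trivial inclusion---indeed only the definitions \eqref{make_relation} and \eqref{make_operation} together with \Cref{lemma:compact}---is a genuine simplification of the paper's treatment, which routes tightness through the full identity. One cosmetic omission: being \emph{spectral} presupposes being \emph{continuous} (openness of $\Omega_{R_\Omega}(S_1,\dots,S_n)$ for arbitrary open $S_i$), which you do not address; but it follows from the same two facts you already use, namely that arbitrary opens are unions of basic opens and that $\Omega_{R_\Omega}$ commutes with unions in each coordinate, so nothing is missing in substance.
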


\begin{proof}
Since sets of the form $\widehat a$ form a basis for the topology, in order to prove that $R_\Omega$ is continuous, it is sufficient to prove that for each $a_1,\dots,a_n \in \algebra A$, the set $\Omega_{R_\Omega}(\widehat a_1, \dots, \widehat a_n)$ is open. This relies on the fact that $\Omega_{R_\Omega}$ commutes with unions in any of its argument, which clearly holds any operation of the form $\Omega_{R}$. Further, since  sets of the form $\widehat a$ are \emph{compact} opens, any compact open is a finite union of sets of the form $\widehat a$. So in order to prove that if $S_1,\dots, S_n \subseteq X$ are compact opens, then $\Omega_{R_\Omega}(S_1, \dots, S_n)$ is a compact open, it is  sufficient to prove that for each $a_1,\dots,a_n \in \algebra A$, the set $\Omega_{R_\Omega}(\widehat a_1, \dots, \widehat a_n)$ is a compact open. This relies again on the fact that $\Omega_{R_\Omega}$ commutes with unions in any of its components as well as the fact that a finite union of compact sets is compact. In summary, to prove that $R_\Omega$ is spectral, it suffices to check that  each  $\Omega_{R_\Omega}(\widehat a_1, \dots, \widehat a_n)$ is a compact open.

We claim that $\Omega_{R_\Omega}(\widehat a_1, \allowbreak\dots, \widehat \allowbreak a_n) = \widehat {\Omega(a_1,\dots,a_n)}$. First note that if $\mu \in \Omega_{R_\Omega}(\widehat a_1, \dots, \widehat a_n)$, then there exist $\mu_1,\dots,\mu_n$ containing $a_1, \dots, a_n$, respectively, such that, in particular, $\Omega(a_1,\allowbreak \dots,\allowbreak a_n) \in \mu$. So $\mu \in \widehat {\Omega(a_1,\dots,a_n)}$. This gives the inclusion $\Omega_{R_\Omega}(\widehat a_1, \dots, \widehat a_n) \subseteq \widehat {\Omega(a_1,\dots,a_n)}$. 

Now suppose the inclusion is strict, so there exists $\nu$ in $\widehat {\Omega(a_1,\dots,a_n)}$ but not in  $\Omega_{R_\Omega}(\widehat a_1, \dots,\allowbreak \widehat a_n)$. Then for each $\mu_1 \in \widehat a_1,\dots,\mu_n \in \widehat a_n$, there are $a_{\mu_1} \in \mu_1, \dots, a_{\mu_n} \in \mu_n$ such that $\Omega( a_{\mu_1} , \dots,\allowbreak a_{\mu_n}) \not\in \nu $, and without loss of generality, $a_{\mu_i} \le a_i$, for each $i$ (which we assume without mention for elements of the $\mu_i$'s from now on). Then by compactness of  $\widehat a$, for each $\mu_1 \in \widehat a_1,\dots,\mu_{n-1} \in \widehat a_{n-1}$, there are $ a_{\mu_1} \in \mu_1, \dots, a_{\mu_{n-1}} \in \mu_{n-1}$ and finitely many $a_n^1, \dots a_n^k$ such that  $\Omega( a_{\mu_1} , \dots, a_{\mu_{n-1}}, a_n^j) \not\in \nu $ for each $j$, and $\join_j a_n^j = a_n$. Since $\Omega$ is an operator, this implies $\Omega(a_{\mu_1} , \dots,a_{\mu_{n-1}}, a_n) \not\in \nu $. Continuing this inductive argument, we arrive at $\Omega(a_1 , \dots, a_n) \not\in \nu $, contradicting $\nu$ in $\widehat {\Omega(a_1,\dots,a_n)}$. Hence the inclusion $\Omega_{R_\Omega}(\widehat a_1, \dots, \widehat a_n) \subseteq \widehat {\Omega(a_1,\dots,a_n)}$ is an equality, so  $\Omega_{R_\Omega}(\widehat a_1, \dots, \widehat a_n)$ is a compact open, as required. 

Finally, we prove that $R_\Omega$ is tight. If condition \eqref{eq:tight} holds, then in particular it holds when each $S_i$ is of the form $\widehat a_i$. So for maximal filters $\mu_1, \dots, \mu_n$, condition \eqref{eq:tight} implies that for all $a_1 \in \mu_1, \dots, a_n \in \mu_n$ we have $\mu_{n+1} \in \Omega_{R_\Omega}(\widehat a_1, \dots, \widehat a_n)$. But we just saw that $\Omega_{R_\Omega}(\widehat a_1, \allowbreak\dots, \widehat \allowbreak a_n) = \widehat {\Omega(a_1,\dots,a_n)}$, so $\mu_{n+1} \in \Omega_{R_\Omega}(\widehat a_1, \dots, \widehat a_n)$ is equivalent to $\Omega(a_1,\dots,a_n) \in \mu_{n+1}$. Thus we have $\{\Omega(a_1, \dots, a_n) \mid a_i \in \mu_i, 1\leq i \leq n\}\subseteq \mu_{n+1}$, which by the definition of $R_\Omega$ gives $R_\Omega \mu_1{\dots}\mu_{n+1}$. We conclude that $R_\Omega$ is tight.
\end{proof}

\begin{lemma}\label{l:11}

Let $h \from \algebra A \to \algebra B$ be a homomorphism
of difference--restriction algebras, and let
$\Omega^{\algebra A}$ and $\Omega^{\algebra B}$ be $n$-ary operators on $\algebra A$
and $\algebra B$ respectively. If $h$ validates
\[h(\Omega^{\algebra A}(a_1, \dots, a_n)) = \Omega^{\algebra
B}(h(a_1), \dots, h(a_n)),\]
then $Fh$ satisfies the reverse forth and the back conditions with
respect to $R_{\Omega^{\algebra B}}$ and $R_{\Omega^{\algebra A}}$.
\end{lemma}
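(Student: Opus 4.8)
The plan is to verify the reverse forth and back conditions separately, using throughout the description of $Fh$ as $\xi \mapsto h^{-1}(\xi)$ (which is defined precisely when $\xi$ meets the image of $h$), the definition \eqref{make_relation} of the relations $R_\Omega$, and the hypothesis $h(\Omega^{\algebra A}(a_1, \dots, a_n)) = \Omega^{\algebra B}(h(a_1), \dots, h(a_n))$. The reverse forth condition is immediate: assuming $R_{\Omega^{\algebra B}}\xi_1{\dots}\xi_{n+1}$ and that each $Fh(\xi_i) = h^{-1}(\xi_i) =: \nu_i$ is defined for $1 \le i \le n$, then for any $a_i \in \nu_i$ we have $h(a_i) \in \xi_i$, so \eqref{make_relation} and the hypothesis give $h(\Omega^{\algebra A}(a_1, \dots, a_n)) = \Omega^{\algebra B}(h(a_1), \dots, h(a_n)) \in \xi_{n+1}$. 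Hence $\xi_{n+1}$ meets the image of $h$, so $Fh(\xi_{n+1})$ is defined, and this membership says exactly that $\Omega^{\algebra A}(a_1, \dots, a_n) \in h^{-1}(\xi_{n+1}) = Fh(\xi_{n+1})$ for all $a_i \in \nu_i$; that is, $R_{\Omega^{\algebra A}}\nu_1{\dots}\nu_n\, Fh(\xi_{n+1})$.

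The back condition is the substantial part. Fix $\xi_{n+1}$ with $Fh(\xi_{n+1}) =: \nu_{n+1}$ defined and maximal filters $\mu_1, \dots, \mu_n$ of $\algebra A$ with $R_{\Omega^{\algebra A}}\mu_1{\dots}\mu_n\nu_{n+1}$; I must produce $\xi_1, \dots, \xi_n \in \pf(\algebra B)$ with $h^{-1}(\xi_i) = \mu_i$ and $R_{\Omega^{\algebra B}}\xi_1{\dots}\xi_n\xi_{n+1}$. I would build these one coordinate at a time, maintaining after the $k$-th step the invariant
\[\Omega^{\algebra B}(b_1, \dots, b_k, h(a_{k+1}), \dots, h(a_n)) \in \xi_{n+1} \quad \text{for all } b_i \in \xi_i\ (i \le k),\ a_j \in \mu_j\ (j > k).\]
The case $k = 0$ unwinds, via the hypothesis on $h$ and $\nu_{n+1} = h^{-1}(\xi_{n+1})$, precisely to the assumption $R_{\Omega^{\algebra A}}\mu_1{\dots}\mu_n\nu_{n+1}$, while the case $k = n$ is exactly $R_{\Omega^{\algebra B}}\xi_1{\dots}\xi_n\xi_{n+1}$.

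For the inductive step I would set
\[G := \{b \in \algebra B \mid \Omega^{\algebra B}(b_1, \dots, b_k, b, h(a_{k+2}), \dots, h(a_n)) \in \xi_{n+1} \text{ for all } b_i \in \xi_i,\ a_j \in \mu_j\}\]
and seek $\xi_{k+1} \in \pf(\algebra B)$ with $h[\mu_{k+1}] \subseteq \xi_{k+1} \subseteq G$: the left inclusion holds for any such $\xi_{k+1}$ by the invariant, while $\xi_{k+1} \subseteq G$ is exactly what propagates the invariant. Any maximal filter $\xi_{k+1} \supseteq h[\mu_{k+1}]$ automatically satisfies $h^{-1}(\xi_{k+1}) = \mu_{k+1}$, since $h^{-1}(\xi_{k+1})$ is then a proper filter containing the maximal filter $\mu_{k+1}$. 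To obtain such a separating maximal filter, I would show that $\algebra B \setminus G$ is an ideal disjoint from the filter generated by $h[\mu_{k+1}]$ and containing $0$, so that a Zorn's lemma argument yields a filter maximal among those disjoint from $\algebra B \setminus G$, which is genuinely an element of $\pf(\algebra B)$ by \Cref{p:2}.

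The crux, and the main obstacle, is checking that $\algebra B \setminus G$ is an ideal. Down-closure and $0 \in \algebra B \setminus G$ follow from monotonicity and normality of $\Omega^{\algebra B}$ (monotonicity itself being a consequence of additivity and normality). Closure under existing joins is the delicate point: given two elements of $\algebra B \setminus G$ witnessed by possibly different auxiliary tuples, I would first replace both tuples by their componentwise meets — legitimate since the $\xi_i$ and $\mu_j$ are filters and $\Omega^{\algebra B}$ is monotone — to obtain a single common tuple witnessing both, and then invoke additivity of $\Omega^{\algebra B}$ together with the primality of maximal filters with respect to existing joins to conclude that the join of the two elements is again witnessed out of $G$. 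Here primality of $\xi_{n+1}$ is a short consequence of \Cref{p:2} and the fact that an existing binary join in a difference--restriction algebra is realised as set-theoretic union in any representation. Running the induction from $k = 0$ to $k = n$ then completes the construction.
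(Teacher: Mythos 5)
Your proposal is correct, and its overall skeleton matches the paper's: the reverse forth half is the same short computation, and for the back condition both you and the paper run a coordinate-by-coordinate induction whose invariant says that every choice from the already-built filters, together with $h$-images of elements of the remaining $\mu_j$'s, lands in the target filter, with the inductive step resting on the same three ingredients (componentwise meets to obtain common witnesses, additivity of $\Omega^{\algebra B}$, and primality of the target maximal filter with respect to existing joins). Where you genuinely diverge is in how the extension step is carried out. The paper fixes elements $a_i \in \mu_i$ once and for all and works inside the Boolean algebras $h(a_i)^\downarrow$: it extends a filter base to an \emph{ultrafilter} of that downset (recording the bad elements via their relative complements), and only at the end converts these into maximal filters of $\algebra B$ by upward closure, delegating the maximality claim to \cite[Proposition~5.4]{diff-rest1}. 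You instead construct each $\xi_{k+1}$ directly in $\algebra B$ by a Zorn argument separating the filter generated by $h[\mu_{k+1}]$ from the ideal $\algebra B \setminus G$, certifying maximality via \Cref{p:2}. Your route is more self-contained in that it never leaves $\algebra B$ (no choice of the $a_i$, no downset machinery), but it leans on a separation principle that the paper nowhere states and that you assert rather than prove: a filter maximal among those disjoint from a set that is downward closed, contains $0$, and is closed under existing joins does satisfy the dichotomy of \Cref{p:2}. This is true, and the proof is the expected one---if $a \in \xi_{k+1}$ and neither $a \bmeet b$ nor $a - b$ lies in $\xi_{k+1}$, then there are $d, e \in \xi_{k+1}$ with $d \bmeet a \bmeet b$ and $e \bmeet (a - b)$ in the ideal, and setting $f = d \bmeet e \bmeet a \in \xi_{k+1}$, both $f \bmeet b$ and $f - b$ lie in the ideal, whence so does their join, which exists and equals $f$, a contradiction---but you should record this argument explicitly, since the paper's reduction to Boolean ultrafilter extension is designed precisely to avoid needing it.
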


\begin{proof}
We write $R_\cA$ for $R_{\Omega^{\algebra A}}$, and we write $R_\cB$
for $R_{\Omega^{\algebra B}}$. For the reverse forth condition,
suppose $R_\cB \nu_1\dots \nu_{n+1}$ holds and that $Fh(\nu_1), \dots,
Fh(\nu_n)$ are defined. Denote $Fh(\nu_1), \dots, \allowbreak Fh(\nu_n)$ by $\mu_1,
\dots, \mu_n$ respectively. By the definition of $Fh$, we have $\mu_i
=h^{-1} (\nu_i)$ for each $i$. Then for each $a_1 \in \mu_1, \dots, a_n \in \mu_n$ we have \begin{equation}\label{eq:homomorphism}
h(\Omega^{\algebra A}(a_1, \dots,
a_n)) = \Omega^{\algebra B}(h(a_1), \dots, h(a_n)) \in
\nu_{n+1},\end{equation} by the definition of $R_\cB$. Since $\mu_1 \times \dots \times \mu_n$ is nonempty, \eqref{eq:homomorphism} shows in particular that $h^{-1}(\nu_{n+1})$ is nonempty, and thus $Fh$ is defined at $\nu_{n+1}$, with  $Fh(\nu_{n+1}) = h^{-1}(\nu_{n+1})$. Then by the definition of $R_\cA$, \eqref{eq:homomorphism} further shows that  the relation
$R_\cA \mu_1{\dots} \mu_{n}Fh(\nu_{n+1})$ holds, and the reverse forth
condition is established.

For the back condition, suppose that $Fh(\nu_{n+1})$ is defined and
that the relation \[R_\cA \mu_1{\dots} \mu_nFh(\nu_{n+1})\] holds. 
Write $\mu_{n+1}$ for $Fh(\nu_{n+1})$. 
Fix some $a_1 \in \mu_1, \dots, a_n \in \mu_n$. It suffices to find ultrafilters $U_1, \dots, U_n$ of $h(a_1)^\downarrow, \dots, h(a_n)^\downarrow$, respectively, such that
\begin{enumerate}
\item \label{item:h}
for each $i$, if $a \le a_i$ then $h(a) \in U_i$;
\item \label{item:closed}
for each $b_1 \in U_1, \dots, b_n \in U_n$ we have $\Omega^\cB(b_1, \dots, b_n) \in \mu_{n+1}$.
\end{enumerate}
Then we can simply define $\nu_i = U_i^\uparrow$, for each $i$ (where the upwards closure is taken in $\cB$). By \cite[Proposition~5.4]{diff-rest1} these are elements of $\pf(\cB)$, and it is straightforward to check that $Fh(\nu_i) = \mu_i$ and $R_\cB\nu_1{\dots}\nu_{n+1}$, as required.

To obtain the $U_i$'s, we maintain a list $F_1 \subseteq h(a_1)^\downarrow, \dots F_n \subseteq h(a_n)^\downarrow$ such that
\begin{itemize}
\item 
each $F_i$ is nonempty;
\item 
each $F_i$ does not contain $0$;

\item 
each $F_i$ is closed under meets;

\item 
for each $b_1 \in F_1, \dots, b_n \in F_n$ we have $\Omega^\cB(b_1, \dots, b_n) \in \nu_{n+1}$.
\end{itemize}
We begin by setting $F_i \coloneqq \{h(a) \mid a \le a_i\}$ and checking that this list of $F_i$s has the required properties. Then we successively expand  each $F_i$, one-by-one, to an ultrafilter $U_i$, updating the $i$th element of our list to $U_i$ and checking that the properties are maintained by each update. 

For notational convenience, we explain how to perform the first update (of $F_1$); the other updates are analogous.
Define the set \[G \coloneqq \{ b \in h(a_1)^\downarrow \mid \exists b_2 \in F_2, \dots, b_n \in F_n : \Omega^\cB(\overline b, b_2, \dots, b_n) \not\in \nu_{n+1}\},\] where the complement $\overline b$ is taken in $h(a_1)^\downarrow$. 
We claim that $G$ is closed under meets. 
Indeed, suppose $b, b' \in h(a_1)^\downarrow$ and both $\Omega^\cB(\overline b, b_2, \dots, b_n) \not\in \nu_{n+1}$ and $\Omega^\cB(\overline {b'}, b_2, \dots, b_n) \not\in \nu_{n+1}$. For $2 \leq i \leq n$, define $c_i \coloneqq b_i \cdot b'_i$,  so $c_i \in F_i$. Since $\Omega^\cB$ is order preserving in each coordinate, we have $\Omega^\cB( \overline b,c_2,\dots,c_n) \not\in \nu_{n+1}$ and $\Omega^\cB( \overline {b'},c_2,\dots,c_{n}) \not\in \nu_{n+1}$.
Then as $h(a_1) -(b\cdot b') = (h(a_1) -b) + (h(a_1) - b')$, by additivity of $\Omega^\cB$, we have \[\Omega^\cB( \overline {b\cdot b'}, c_2, \dots,c_n)  = \Omega^\cB( \overline b,c_2,\dots,c_n)  + \Omega^\cB( \overline {b'},c_2,\dots,c_n) \not\in \nu_{n+1} .\footnote{The primality property of $\nu_{n+1}$ we are using here follows from \cite[Proposition~5.4]{diff-rest1}.}\]
Now the set $H \coloneqq \{b_1 \cdot b \mid b_1 \in F_1\text{ and }b \in G\}$ is nonempty and closed under meets. We check that it also does not contain $0$. So suppose for a contradiction that $b_1 \in F_1$, $b \in G$ and $b_1 \cdot b = 0$. Then $b_1 \le \overline b$. Since $b \in G$, there exist $b_2\in F_2,\dots,b_n\in F_n$ such that $\Omega^\cB(\overline b, b_2, \dots, b_n) \not\in \nu_{n+1}$. As $\Omega^\cB$ is order preserving, we obtain $\Omega^\cB(b_1, b_2, \dots, b_n) \not\in \nu_{n+1}$, contradicting our assumption about the current list of $F_i$s. 

Finally, extend $H$ to an ultrafilter $U_1$ of $h(a_1)^\downarrow$. Then $U_1$ is nonempty, does not contain $0$, and is closed under meets. So to be allowed to replace $F_1$ in our list with $U_1$ we only need to check that for each $b_1 \in U_1$ and $b_2 \in F_2, \dots, b_n \in F_n$ we have $\Omega^\cB(b_1, \dots, b_n) \in \nu_{n+1}$. But suppose $\Omega^\cB(b_1, \dots, b_n) \not\in \nu_{n+1}$ for such $b_1, \dots, b_n$. Then $\overline b_1 \in G$, we have $\overline b_1 \in U_1$. As $U_1$ is an ultrafilter, this implies $b_1 \not\in U_1$---a contradiction.

It is clear that the list of ultrafilters we obtain after performing all $n$ updates has the properties \eqref{item:h} and \eqref{item:closed}, as required.
\end{proof}

\begin{lemma}\label{l:12}
If a spectral relation $R$ has the compatibility property, then the operation $\Omega_R$ is a
compatibility preserving operator.
\end{lemma}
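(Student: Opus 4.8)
The plan is to check the three defining conditions of a compatibility preserving operator for $\Omega_R$ in turn, leaning throughout on the description of $\ko\pi$ supplied by \Cref{l:2} and on the purely set-theoretic form of $\Omega_R$ in \eqref{make_operation}. I would first record the facts I intend to reuse: elements of $\ko\pi$ are compact open sets on which $\pi$ is injective, the bottom element is $\emptyset$, and (from the proof of \Cref{l:5}) two elements $U, V \in \ko\pi$ are compatible exactly when $\pi|_{U \cup V}$ is injective; since $\pi$ is already injective on each of $U$ and $V$, this reduces to $x \mathrel C y$ for all $x \in U$, $y \in V$, where $C$ is the compatibility relation on $X$. The well-definedness of $\Omega_R$ as an operation on $\ko\pi$ (which already uses that $R$ is spectral and has the compatibility property) was dispatched in the paragraph preceding the lemma, so I may take it for granted.

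\emph{Normality} I would settle immediately: $\Omega_R(S_1, \dots, S_{i-1}, \emptyset, S_{i+1}, \dots, S_n) = \emptyset$ because \eqref{make_operation} demands a witness $x_i \in \emptyset$. For \emph{additivity}, the crucial observation is that $\Omega_R$ distributes over unions in each coordinate --- immediate from the existential quantifier in \eqref{make_operation} --- and that a binary join in an algebra of partial functions, when it exists, is the set-theoretic union. Hence, if $b \bjoin c$ exists, $\Omega_R(\dots, b \bjoin c, \dots) = \Omega_R(\dots, b, \dots) \cup \Omega_R(\dots, c, \dots)$. To convert this union into a join in $\ko\pi$ I would use that the left-hand side lies in $\ko\pi$, so $\pi$ is injective on it; this forces the two summands to be compatible, whereby their union is their join, giving the additivity equation.

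The heart of the argument is \emph{compatibility preservation}, and this is exactly where the compatibility property of $R$ is consumed. Assuming $S_i$ and $S'_i$ are compatible for every $i$, I would take arbitrary $z \in \Omega_R(S_1, \dots, S_n)$ and $z' \in \Omega_R(S'_1, \dots, S'_n)$, extract via \eqref{make_operation} witnesses $x_i \in S_i$ with $R x_1 \dots x_n z$ and $x'_i \in S'_i$ with $R x'_1 \dots x'_n z'$, observe that compatibility of $S_i$ and $S'_i$ yields $x_i \mathrel C x'_i$ for each $i$, and then apply the compatibility property of $R$ to conclude $z \mathrel C z'$. Since $z$ and $z'$ were arbitrary, $\Omega_R(S_1, \dots, S_n)$ and $\Omega_R(S'_1, \dots, S'_n)$ are compatible.

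I do not expect a genuine obstacle: each step collapses to the definition of $\Omega_R$ together with facts already in hand about $\ko\pi$. The one place calling for care is the additivity step, where one must not presume the two summands compatible but instead deduce it from well-definedness of $\Omega_R$ on $\ko\pi$; getting the order of that inference right is the only subtlety.
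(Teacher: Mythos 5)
Your proposal is correct and follows essentially the same route as the paper: normality from the empty witness set, additivity from $\Omega_R$ commuting with unions (which give the binary joins in $\ko\pi$), and compatibility preservation by extracting $R$-witnesses from \eqref{make_operation} and applying the compatibility property of $R$. Your extra care in the additivity step---deducing compatibility of the two summands from the fact that their union $\Omega_R(\dots, b \bjoin c, \dots)$ already lies in $\ko\pi$, so that their union really is their join---is a point the paper's proof leaves implicit, and is a welcome refinement rather than a divergence.
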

\begin{proof}
To see that $\Omega_R$ is compatibility preserving, first note that from the definition of $\rest_\pi$ \eqref{eq:dual_operations}, it is clear that any two $S, S' \in \cA^{\mathrm {co}}_\pi$ are compatible if and only if for each $x \in S$ and $x' \in S'$, we have $x C x'$. 
Let $S_i, S'_i
\in \cA^{\mathrm {co}}_\pi$ be compatible, for each $i$. 
Let $x_{n+1} \in \Omega_R(S_1 ,
\dots, S_n )$ and $x'_{n+1} \in \Omega_R( S'_1, \dots, S'_n)$. Then by the definition of $\Omega_R$, there exist $x_1 \in S_1, \dots, x_n \in S_n$ and $x'_1 \in S'_1, \dots , x'_n \in S'_{n+1}$ such that $Rx_1{\dots}x_{n+1}$ and $Rx'_1{\dots}x'_{n+1}$. For each $i$, since $S_i$ and $S'_i$ are compatible, we have $x_i C x'_i$. As $R$ has the compatibility property, we conclude that $x_{n+1} C x'_{n+1}$. Since $x_{n+1}$ and $x'_{n+1}$ were arbitrary elements of $\Omega_R(S_1 ,
\dots, S_n )$ and $ \Omega_R( S'_1, \dots, S'_n)$, respectively, we conclude that $\Omega_R(S_1 ,
\dots, S_n )$ and $ \Omega_R( S'_1, \dots, S'_n)$ are compatible.

To see that $\Omega_R$ is normal, simply note that if $S_i = \emptyset$, then the condition $\exists x_i \in S_i$ in \eqref{make_operation} cannot be witnessed.  To see that~$\Omega_R$ is additive, recall that binary joins in $\cA^{\mathrm {co}}_\pi$ are given by binary unions, and that from its definition, $\Omega_R$ clearly commutes with unions in any of its components. 
\end{proof}

\begin{lemma}\label{l:13}
Let $\varphi \from X \parrow Y$ define a morphism in $\hauset$ from
$(\pi \from X \twoheadrightarrow X_0)$ to $(\rho \from Y
\twoheadrightarrow Y_0)$, and let $R_X$ and $R_Y$ be $(n+1)$-ary
spectral relations on $X$ and $Y$ respectively, both having the compatibility
property. If $\varphi$ satisfies the reverse forth and the back
conditions with respect to $R_X$ and $R_Y$, then the $n$-ary
operations $\Omega_{R_X}$ and $\Omega_{R_Y}$ validate
\begin{equation}\label{eq:morphism}
G\varphi(\Omega_{R_Y}(Y_1, \dots, Y_n)) =
\Omega_{R_X}(G\varphi(Y_1), \dots, G\varphi(Y_n)).\end{equation}
\end{lemma}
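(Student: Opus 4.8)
The plan is to unfold both sides of \eqref{eq:morphism} into their set-theoretic definitions and then derive the two inclusions separately, each from one of the two morphism conditions. Recalling that $G$ acts on morphisms by inverse image, so that $G\varphi(U) = \varphi^{-1}(U)$, the claimed equality reads
\[
\varphi^{-1}\bigl(\Omega_{R_Y}(Y_1, \dots, Y_n)\bigr) = \Omega_{R_X}\bigl(\varphi^{-1}(Y_1), \dots, \varphi^{-1}(Y_n)\bigr).
\]
Using \eqref{make_operation}, the left-hand side is the set of $x_{n+1} \in \dom(\varphi)$ for which there exist $y_1 \in Y_1, \dots, y_n \in Y_n$ with $R_Y y_1 {\dots} y_n \varphi(x_{n+1})$, while the right-hand side is the set of $x_{n+1}$ for which there exist $x_1 \in \varphi^{-1}(Y_1), \dots, x_n \in \varphi^{-1}(Y_n)$ with $R_X x_1 {\dots} x_{n+1}$.

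For the inclusion $\supseteq$, I would take $x_{n+1}$ in the right-hand side, with witnesses $x_1 \in \varphi^{-1}(Y_1), \dots, x_n \in \varphi^{-1}(Y_n)$ and $R_X x_1 {\dots} x_{n+1}$. In particular $\varphi(x_1), \dots, \varphi(x_n)$ are all defined, so the reverse forth condition applies and yields that $\varphi(x_{n+1})$ is defined and $R_Y \varphi(x_1) {\dots} \varphi(x_{n+1})$ holds. Since $x_i \in \varphi^{-1}(Y_i)$ gives $\varphi(x_i) \in Y_i$, the elements $y_i \coloneqq \varphi(x_i)$ witness that $x_{n+1}$ lies in the left-hand side.

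For the inclusion $\subseteq$, I would take $x_{n+1}$ in the left-hand side, so that $\varphi(x_{n+1})$ is defined and there are $y_1 \in Y_1, \dots, y_n \in Y_n$ with $R_Y y_1 {\dots} y_n \varphi(x_{n+1})$. Now the back condition provides $x_1, \dots, x_n \in \dom(\varphi)$ with $\varphi(x_i) = y_i$ and $R_X x_1 {\dots} x_{n+1}$. Since $\varphi(x_i) = y_i \in Y_i$, each $x_i \in \varphi^{-1}(Y_i)$, and these elements witness that $x_{n+1}$ lies in the right-hand side.

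I do not expect any real obstacle here: once the definitions are unfolded, each inclusion is exactly one of the two morphism conditions, with reverse forth supplying $\supseteq$ and back supplying $\subseteq$. The only points requiring a little care are bookkeeping the partiality of $\varphi$ (ensuring $\varphi(x_{n+1})$ is defined whenever needed, which is precisely what the reverse forth condition guarantees in the $\supseteq$ direction and is part of the hypothesis in the $\subseteq$ direction) and keeping the final coordinate of the $(n+1)$-ary relations fixed as the output slot throughout. Note also that both sides are genuine elements of $\ko\pi$, since $R_X$, $R_Y$ are spectral and have the compatibility property, but this well-definedness plays no role in verifying the set equality itself.
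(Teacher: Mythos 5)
Your proposal is correct and is essentially the paper's own argument: the paper reduces the statement to exactly the verification $\varphi^{-1}(\Omega_{R_Y}(Y_1,\dots,Y_n)) = \Omega_{R_X}(\varphi^{-1}(Y_1),\dots,\varphi^{-1}(Y_n))$ and then cites the identical computation from the discrete case (Lemma~5.10 of the earlier paper), which is precisely the two-inclusion argument you spell out, with reverse forth giving $\supseteq$ and back giving $\subseteq$. Your closing remark also matches the paper's parenthetical note that spectrality and the compatibility property serve only to make both sides well-defined elements of the dual algebra.
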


\begin{proof}
Recall that $G\varphi$ is given by $\varphi^{-1}$
(inverse image). The proof is a verification that for $Y_1, \dots, Y_n \in G(\rho)$ we have 
\[\varphi^{-1}(\Omega_{R_Y}(Y_1, \dots, Y_n)) =
\Omega_{R_X}(\varphi^{-1}(Y_1), \dots, \varphi^{-1}(Y_n)),\]
and is identical to the proof in the discrete case, \cite[Lemma 5.10]{diff-rest2}. (The conditions that $R_X$ and $R_Y$ are spectral and have the compatibility property are only necessary to ensure that the terms in \eqref{eq:morphism} are defined.)
%
\end{proof}

\begin{lemma}\label{l:14}
Let $\algebra A$ be a difference--restriction algebra
and $\Omega$ be a compatibility preserving 
$n$-ary operator on $\algebra A$. Then the map $\eta_\algebra A$
used in \Cref{t:adj} validates
\[\eta_{\algebra A}(\Omega(a_1, \dots, a_n)) =
\Omega_{R_\Omega}(\eta_{\algebra A}(a_1), \dots, \eta_{\algebra
A}(a_n)).\]

Let $\pi \from X \twoheadrightarrow X_0$ be a Hausdorff \'etale space, and $R$ be
an $(n+1)$-ary tight spectral relation on $X$ that has the compatibility property. The map $\lambda_\pi$ used in \Cref{t:adj} satisfies the reverse forth
condition and the back condition with respect to $R$ and
$R_{\Omega_R}$.
\end{lemma}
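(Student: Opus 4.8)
The plan is to dispatch the claim about $\eta_{\algebra A}$ at once and then handle the two conditions on $\lambda_\pi$ separately, the back condition being where the real work lies. For the first claim, recall that $\eta_{\algebra A}(a) = \widehat a$, so the required equality $\eta_{\algebra A}(\Omega(a_1, \dots, a_n)) = \Omega_{R_\Omega}(\eta_{\algebra A}(a_1), \dots, \eta_{\algebra A}(a_n))$ is literally the identity $\widehat{\Omega(a_1, \dots, a_n)} = \Omega_{R_\Omega}(\widehat a_1, \dots, \widehat a_n)$ established in the course of proving \Cref{l:10.2}. So there is nothing further to do for the algebra side.

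For the reverse forth condition for $\lambda_\pi$, I would assume $R x_1 \dots x_{n+1}$ and that each $\lambda_\pi(x_i)$ (for $1 \le i \le n$) is defined. Choosing some $U_i \in \lambda_\pi^0(x_i)$ (recall \eqref{eq:lambda}), each $U_i$ is a compact open containing $x_i$ on which $\pi$ is injective; since $x_i \in U_i$ and $R x_1 \dots x_{n+1}$, the definition \eqref{make_operation} gives $x_{n+1} \in \Omega_R(U_1, \dots, U_n)$, and because $R$ is spectral with the compatibility property this set lies in $\ko\pi$. This single witness shows $\lambda_\pi^0(x_{n+1}) \ne \emptyset$, so $\lambda_\pi(x_{n+1})$ is defined. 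Running the identical computation for arbitrary $U_i \in \lambda_\pi(x_i)$ shows $\Omega_R(U_1, \dots, U_n) \in \lambda_\pi(x_{n+1})$, which by \eqref{make_relation} is exactly $R_{\Omega_R}\lambda_\pi(x_1) \dots \lambda_\pi(x_{n+1})$.

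For the back condition, I would assume $\lambda_\pi(x_{n+1})$ is defined and $R_{\Omega_R}\nu_1 \dots \nu_n \lambda_\pi(x_{n+1})$ for maximal filters $\nu_i$ of $\ko\pi$. The first task is to realise each $\nu_i$ by a point: picking any $K \in \nu_i$, the family $\{V \cap K \mid V \in \nu_i\}$ consists of closed subsets of the compact set $K$ (each $V$ being clopen, as compact subsets of the Hausdorff space $X$ are closed) and has the finite intersection property, whence $\bigcap \nu_i \ne \emptyset$. This is precisely the surjectivity argument from \Cref{t:discrete-duality}, and I emphasise that it needs only that $X$ is Hausdorff, not local compactness or zero-dimensionality. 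Choosing $x_i \in \bigcap \nu_i$ gives $\nu_i \subseteq \lambda_\pi^0(x_i)$, and since the latter is a proper filter while $\nu_i$ is maximal, $\nu_i = \lambda_\pi(x_i)$ and $x_i \in \dom(\lambda_\pi)$. It then remains to derive $R x_1 \dots x_{n+1}$, for which I would invoke tightness \eqref{eq:tight}: given arbitrary compact opens $S_i \ni x_i$, pick any $U_i \in \nu_i$ and set $V_i := U_i \cap S_i$. The point of this move is that $V_i$ is again a compact open on which $\pi$ is injective (a closed subset of the compact $S_i$, contained in $U_i$) and contains $x_i$, so $V_i \in \lambda_\pi^0(x_i) = \nu_i$; the relation $R_{\Omega_R}\nu_1 \dots \nu_n \lambda_\pi(x_{n+1})$ then yields $x_{n+1} \in \Omega_R(V_1, \dots, V_n) \subseteq \Omega_R(S_1, \dots, S_n)$ by monotonicity of $\Omega_R$. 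As the $S_i$ were arbitrary, tightness delivers $R x_1 \dots x_{n+1}$. I expect this back condition to be the main obstacle, and within it the delicate point is that the tightness hypothesis quantifies over \emph{all} compact opens rather than merely those in $\ko\pi$; the trick of intersecting an arbitrary compact open with a filter element to land back inside $\ko\pi$ is exactly what bridges that gap.
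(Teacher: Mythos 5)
Your proposal is correct and follows essentially the same route as the paper's proof: the first claim is discharged by citing the identity $\widehat{\Omega(a_1,\dots,a_n)} = \Omega_{R_\Omega}(\widehat a_1, \dots, \widehat a_n)$ from \Cref{l:10.2}, the reverse forth condition by the same witness-then-generalise computation, and the back condition by realising each maximal filter as $\lambda_\pi(x_i)$ for a point $x_i$ in its intersection and then invoking tightness. Your write-up in fact makes explicit two points the paper leaves implicit---that the finite-intersection-property argument borrowed from \Cref{t:discrete-duality} needs only Hausdorffness, and that the passage from filter elements to \emph{arbitrary} compact opens $S_i$ in the tightness check is bridged by intersecting with a filter element to stay inside $\ko\pi$---both of which are accurate and welcome clarifications rather than deviations.
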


\begin{proof}
The first part is simply $\widehat {\Omega(a_1,\dots,a_n)} = \Omega_{R_\Omega}(\widehat a_1, \dots, \widehat a_n)$, which we showed in the proof of \Cref{l:10.2}.

For the reverse forth condition, suppose that $Rx_1{\dots} x_{n+1}$ and that $\lambda_\pi(x_1), \dots, \lambda_\pi(x_n)$ are defined. By the definition of $\lambda_\pi$, this means there exist $U_1 \ni x_1, \dots, U_n \ni x_n$ that are compact and open and on which $\pi$ is injective. Then $x_{n+1} \in \Omega_R(U_1, \dots, U_n)$, and by the properties of $R$, we know that $\Omega_R(U_1, \dots, U_n)$ is compact and open and that $\pi$ is injective on this set. Thus $\lambda^0_\pi(x_{n+1})$ is nonempty, which implies $\lambda_\pi(x_{n+1})$ is defined. By the same reasoning, we see that for \emph{any} $U_1 \in \lambda_\pi(x_1), \dots, U_n \in \lambda_\pi(x_n)$ we have $\Omega_R(U_1, \dots, U_n) \in \lambda_\pi(x_{n+1})$, which by the definition of $R_{\Omega_R}$ gives $R_{\Omega_R}\lambda_\pi(x_1){\dots}\lambda_\pi(x_{n+1})$, as required.

For the back condition, suppose that $\lambda_\pi(x_{n+1})$
is defined and $R_{\Omega_R}\mu_1{\dots} \mu_n\lambda_\pi(x_{n+1})$.  Then for each $i$, the set $\bigcap \mu_i$ is nonempty (see the proof of \Cref{t:discrete-duality} for the argument). Pick some $x_1 \in \bigcap \mu_1, \dots, x_n \in \bigcap \mu_n$. Then for each $i$, we have $\lambda_\pi(x_i) \supseteq \mu_i$, and hence $\lambda_\pi(x_i) = \mu_i$. Further, if $S_1, \dots, S_n$ are compact opens containing $x_1, \dots, x_n$, respectively, then there exist $U_1 \subseteq S_1, \dots, U_n \subseteq S_n$ with $U_1 \in \mu_1, \dots, U_n \in \mu_n$. Then by the definition of $R_{\Omega_R}$ we know $x_{n+1} \in \Omega_R(U_1, \dots, U_n) \subseteq \Omega_R(S_1, \dots, S_n)$. As $R$ is tight, we conclude that $R x_1{\dots}x_{n+1}$, as required. 
\end{proof}

This completes the proof that $F' \dashv G'$, and hence the proof of \Cref{thm:expansion}.

It is now straightforward to extend the completeness and duality
results of the previous section.

\begin{definition}\label{def:completion'}
Let $\algebra A$ be an algebra in $\ralg(\sigma)$.
A \defn{finitary compatible completion} of $\algebra A$  is an embedding $\iota \from \algebra A
\hookrightarrow \algebra C$ of $(\{-, \rest\} \cup
\sigma)$-algebras such that $\cC$ is in $\ralg(\sigma)$ and finitarily compatibly complete, and
$\iota[\cA]$ is finite-join dense in~$\cC$.
\end{definition}

\begin{corollary}\label{p:1_0'}
Let $\algebra A$ be an algebra in $\ralg(\sigma)$.
If $\iota \from \algebra A \hookrightarrow \algebra C$ and $\iota'
\from \algebra A \hookrightarrow \algebra C'$ are finitary compatible completions of $\cA$ then there is a unique
isomorphism $\theta \from \algebra C \to \algebra C'$ satisfying the
condition $\theta \circ \iota = \iota'$.
\end{corollary}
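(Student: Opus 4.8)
The plan is to reduce to the operator-free statement \Cref{p:1_0} and then check separately that the isomorphism it produces respects $\sigma$. First I would pass to $\{-,\rest\}$-reducts: an embedding of $(\{-,\rest\}\cup\sigma)$-algebras is in particular an embedding of $\{-,\rest\}$-algebras, so $\iota$ and $\iota'$ exhibit $\cC$ and $\cC'$ as finitary compatible completions of the $\{-,\rest\}$-reduct of $\cA$. By \Cref{p:1_0} there is then a unique $\{-,\rest\}$-isomorphism $\theta\from\cC\to\cC'$ with $\theta\circ\iota=\iota'$, namely the map sending $\join\iota(a_i)$ to $\join\iota'(a_i)$ (\Cref{lemma:key}). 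This already settles uniqueness in the present setting, because any $(\{-,\rest\}\cup\sigma)$-isomorphism satisfying $\theta\circ\iota=\iota'$ is a fortiori such a $\{-,\rest\}$-isomorphism. Hence the only remaining task is to show that this particular $\theta$ also preserves every operator $\Omega\in\sigma$.

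To verify preservation of an $n$-ary $\Omega$, I would exploit that $\Omega$ is normal and additive (\Cref{def:compatibility-preserving}) together with finite-join density. Fix $c_1,\dots,c_n\in\cC$ and, using density of $\iota[\cA]$, write each $c_j=\join_i\iota(a_{ij})$ as a finite join with all $a_{ij}\in\cA$. Distributing $\Omega^{\cC}$ over these joins coordinatewise expresses $\Omega^{\cC}(c_1,\dots,c_n)$ as a finite join of the terms $\Omega^{\cC}(\iota(a_{i_1,1}),\dots,\iota(a_{i_n,n}))$ ranging over index tuples $(i_1,\dots,i_n)$. Since $\iota$ is a morphism of $\ralg(\sigma)$, each such term equals $\iota(\Omega^{\cA}(a_{i_1,1},\dots,a_{i_n,n}))$. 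Now apply $\theta$: being an order-isomorphism it preserves existing joins, and $\theta\circ\iota=\iota'$, so the image is the corresponding finite join of $\iota'(\Omega^{\cA}(a_{i_1,1},\dots,a_{i_n,n}))=\Omega^{\cC'}(\iota'(a_{i_1,1}),\dots,\iota'(a_{i_n,n}))$. Recombining by additivity in $\cC'$, and using $\theta(c_j)=\join_i\iota'(a_{ij})$, yields $\Omega^{\cC'}(\theta(c_1),\dots,\theta(c_n))$. This is precisely the standard fact that a normal additive operator on a completion is determined by its restriction to a finite-join-dense subalgebra.

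The main obstacle is not the chain of equalities itself but the bookkeeping that makes every join in it legitimate. The joins $\join_i\iota(a_{ij})$ exist by the definition of finitary compatible completion, and their summands are pairwise compatible (being dominated by $c_j$). The crucial step is that after applying $\Omega$ the resulting finite family $\{\Omega^{\cC}(\iota(a_{i_1,1}),\dots,\iota(a_{i_n,n}))\}$ is again pairwise compatible: this is exactly where the hypothesis that $\Omega$ is \emph{compatibility preserving} is used, and, $\cC$ being finitarily compatibly complete, its join therefore exists. Distributivity of $\Omega$ over a finite compatible join in a single coordinate then follows by induction from the binary additivity axiom, with \Cref{l:4} ensuring that the partial joins formed along the way remain pairwise compatible. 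The identical remarks apply in $\cC'$. Once these existence and compatibility points are secured, the displayed reasoning is routine and the corollary follows.
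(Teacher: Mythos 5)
Your proposal is correct and follows essentially the same route as the paper: take the isomorphism $\theta$ from \Cref{p:1_0} (uniqueness being immediate since a $(\{-,\rest\}\cup\sigma)$-isomorphism commuting with the embeddings is in particular a $\{-,\rest\}$-one), and then use finite-join density of $\iota[\cA]$ together with additivity/normality of the operators to check that $\theta$ preserves each $\Omega\in\sigma$. The paper states this in two sentences; your write-up supplies the distribution-and-recombination details and the compatibility bookkeeping that the paper leaves implicit, and does so correctly.
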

\begin{proof}
Use the isomorphism $\theta$ from \Cref{p:1_0}. The fact that $\iota[\algebra A]$ is finite-join dense in $\algebra C$ and the normality and additivity of the additional operations ensure those additional operators are preserved by $\theta$.
\end{proof}

\begin{corollary}\label{p:completion'}
For every algebra $\cA$ in $\ralg(\sigma)$, the embedding
$\eta_\cA \from \cA \hookrightarrow (G' \circ F') (\cA)$ is the
finitary compatible completion of $\cA$.
\end{corollary}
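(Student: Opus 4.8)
The plan is to reduce the statement to the operator-free case of \Cref{p:completion} and then verify that the only additional data—the $\sigma$-operations and their preservation—has already been supplied by the lemmas preceding \Cref{thm:expansion}. The starting observation is that, by \Cref{thm:expansion}, the $\{-,\rest\}$-reduct of $(G' \circ F')(\cA)$ is precisely $(G \circ F)(\cA)$, and that $\eta_\cA$, regarded as a map in $\ralg(\sigma)$, has as its underlying $\{-,\rest\}$-homomorphism the map $\eta_\cA$ of \Cref{t:adj} (this is exactly the reason $\eta_\cA$ was admitted as a unit in the extended adjunction). Hence everything \Cref{p:completion} asserts about the latter transfers verbatim: $\eta_\cA$ is an injective $\{-,\rest\}$-homomorphism, its image is finite-join dense in $(G \circ F)(\cA)$, and $(G \circ F)(\cA)$ is finitarily compatibly complete.

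The second step is to observe that the two conditions appearing in \Cref{def:completion'}—finitary compatible completeness and finite-join density—are properties of the $\{-,\rest\}$-reduct alone. Indeed, by \Cref{algebra_compatibility} the compatibility relation is defined purely from $\rest$, the order is definable from $\bmeet$, and joins are computed in the underlying poset; none of this mentions the $\sigma$-operations. Consequently the completeness and density just recalled carry over to $(G' \circ F')(\cA)$, viewed as an object of $\ralg(\sigma)$, with no further argument.

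The third step addresses the genuinely $\sigma$-flavoured content and splits into two checks. First, that $(G' \circ F')(\cA)$ is actually an object of $\ralg(\sigma)$: each of its $\sigma$-operations has the form $\Omega_{R_\Omega}$ with $R_\Omega$ the relation \eqref{make_relation}, and by \Cref{l:10} and \Cref{l:10.2} this $R_\Omega$ has the compatibility property and is spectral, whence \Cref{l:12} yields that $\Omega_{R_\Omega}$ is a compatibility preserving operator. Second, that $\eta_\cA$ preserves the $\sigma$-operations; this is precisely the first assertion of \Cref{l:14}, namely $\eta_\cA(\Omega(a_1,\dots,a_n)) = \Omega_{R_\Omega}(\eta_\cA(a_1),\dots,\eta_\cA(a_n))$. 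Together with the injectivity inherited from the reduct, $\eta_\cA$ is therefore an embedding of $(\{-,\rest\}\cup\sigma)$-algebras.

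Assembling the three steps, $\eta_\cA$ is an embedding of $(\{-,\rest\}\cup\sigma)$-algebras into a finitarily compatibly complete object of $\ralg(\sigma)$ whose image is finite-join dense, which is exactly the requirement of \Cref{def:completion'}. I anticipate essentially no obstacle: all of the difficulty was already discharged in \Cref{l:10}, \Cref{l:10.2}, \Cref{l:12}, and \Cref{l:14}, so the proof is a short assembly whose only conceptual content is the single point that completeness and density are invariant under passage to the $\{-,\rest\}$-reduct, while preservation of the operators is handled by \Cref{l:14}.
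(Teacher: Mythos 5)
Your proposal is correct and matches the paper's (implicit) argument: the paper states \Cref{p:completion'} without proof precisely because it is the assembly you describe, reducing to \Cref{p:completion} via the reduct identification in \Cref{thm:expansion} and discharging the $\sigma$-specific obligations with \Cref{l:10}, \Cref{l:10.2}, \Cref{l:12}, and \Cref{l:14}. Your observation that completeness and finite-join density are reduct-level properties is exactly the point that makes the transfer immediate.
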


Let $\fralg(\sigma)$ be the full subcategory of $\ralg(\sigma)$
consisting of the finitarily compatibly complete algebras and $\lczdet(\sigma)$ be the full subcategory of $\hauset(\sigma)$ consisting of the locally compact zero-dimensional spaces.

\begin{corollary}\label{c:extended-duality}
There is a duality between the categories $\fralg(\sigma)$ and
$\lczdet(\sigma)$.
\end{corollary}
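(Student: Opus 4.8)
The plan is to obtain \Cref{c:extended-duality} as the fixed-point duality of the adjunction $F' \dashv G'$ established in \Cref{thm:expansion}, in exactly the way that \Cref{t:discrete-duality} was obtained from the adjunction $F \dashv G$ of \Cref{t:adj} (see \Cref{r:3}). The key point is that the unit and counit of $F' \dashv G'$ are, on underlying objects, the very same natural transformations $\eta$ and $\lambda$ used in \Cref{t:adj}; \Cref{l:14} is precisely the statement that these carry the extra structure, so they do constitute the unit and counit of the extended adjunction. Since any adjunction restricts to a duality between the full subcategories on which the unit and counit are isomorphisms, it suffices to identify those two fixed-point subcategories as $\fralg(\sigma)$ and $\lczdet(\sigma)$.

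First I would handle the algebra side. A morphism of $\ralg(\sigma)$ is an isomorphism precisely when its $\{-, \rest\}$-reduct is, since the set-theoretic inverse of a structure-preserving bijection automatically preserves all the operators. Now $G' \circ F'$ leaves the $\{-, \rest\}$-reduct unchanged (it acts as $G \circ F$ on reducts), and as a map $\eta_\cA$ is exactly the map of \Cref{t:adj}, while \Cref{l:14} guarantees it preserves the $\sigma$-operators. Hence $\eta_\cA$ is an isomorphism in $\ralg(\sigma)$ if and only if it is one in $\ralg$, which by \Cref{r:3} happens exactly when the reduct of $\cA$ lies in $\fralg$, that is, when $\cA \in \fralg(\sigma)$. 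Thus the algebras fixed by the extended unit are precisely the objects of $\fralg(\sigma)$.

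Next the space side, where the genuine work lies. If $\lambda_\pi$ is an isomorphism in $\hauset(\sigma)$, then forgetting the relations it is an isomorphism in $\hauset$, so \Cref{r:3} forces $X$ to be locally compact and zero-dimensional and hence $\pi \in \lczdet(\sigma)$. Conversely, suppose $\pi \in \lczdet(\sigma)$. Then \Cref{r:3} already gives that $\lambda_\pi$ is an isomorphism of the underlying Hausdorff \'etale spaces, and \Cref{l:14} shows it satisfies the reverse forth and back conditions with respect to $R$ and $R_{\Omega_R}$. What remains is to check that its inverse homeomorphism is again a morphism of $\hauset(\sigma)$; equivalently, that under $\lambda_\pi$ the relation $R$ corresponds \emph{exactly} to $R_{\Omega_R}$ on $F'G'\pi$. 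The reverse forth condition from \Cref{l:14} sends $R$-related tuples to $R_{\Omega_R}$-related tuples. For the converse I would unwind the definitions of $\Omega_R$ and $R_{\Omega_R}$: that $R_{\Omega_R}\lambda_\pi(x_1){\dots}\lambda_\pi(x_{n+1})$ holds says precisely that $x_{n+1} \in \Omega_R(U_1, \dots, U_n)$ for every choice of compact-open injective neighbourhoods $U_i \ni x_i$; since in $\lczdet(\sigma)$ such neighbourhoods are cofinal among all compact-open neighbourhoods (\Cref{l:3}) and $\Omega_R$ is monotone, this yields the tightness hypothesis \eqref{eq:tight}, whence tightness of $R$ delivers $R x_1{\dots}x_{n+1}$. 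With $R$ and $R_{\Omega_R}$ in exact correspondence under the homeomorphism $\lambda_\pi$, both $\lambda_\pi$ and its inverse trivially satisfy the reverse forth and back conditions, so $\lambda_\pi$ is an isomorphism in $\hauset(\sigma)$.

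I expect this last point---recovering $R$ from $\Omega_R$ via tightness, so that the inverse of $\lambda_\pi$ is also a $\hauset(\sigma)$-morphism---to be the only real obstacle; everything else is formal or transported verbatim from the proof of \Cref{t:discrete-duality}. Once the two fixed-point subcategories are identified as $\fralg(\sigma)$ and $\lczdet(\sigma)$, the duality follows. As a consistency check one sees that this restricted duality is compatible with the completion statement \Cref{p:completion'}, just as \Cref{t:discrete-duality} was compatible with \Cref{p:completion}.
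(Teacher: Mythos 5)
Your proposal is correct and follows essentially the same route as the paper: reduce to \Cref{t:discrete-duality} (the fixed-point duality of \Cref{r:3}) and check that $\eta_{\algebra A}$ and $\lambda_\pi$ remain isomorphisms in the expanded categories, the algebra side being the elementary fact that a bijective homomorphism has an operator-preserving inverse, and the space side being that $\lambda_\pi$ not only preserves but also \emph{reflects} each relation via tightness of $R$. Your explicit tightness argument for recovering $R$ from $R_{\Omega_R}$ is exactly the content the paper extracts from the back-condition part of the proof of \Cref{l:14}.
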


\begin{proof}
Given \Cref{t:discrete-duality}, we only need to check that the
families of functions $\eta_{\algebra A}$ and $\lambda_\pi$ are
still isomorphisms in the expanded categories, for which it only
remains to show that the functions $\eta_{\algebra A}^{-1}$ and
$\lambda_\pi^{-1}$ are valid morphisms in the expanded categories.

We know $\eta_{\algebra A}$ is a bijection and preserves additional
operations, and it is an elementary algebraic fact that this implies
its inverse $\eta_{\algebra A}^{-1}$ preserves those same additional
operations. Thus $\eta_{\algebra A}^{-1}$ is a morphism.

For $\lambda_\pi^{-1}$, we must check that the reverse forth
condition and the back condition are satisfied with respect to
additional relations.  But we see from the proof of \Cref{l:14} that
$\lambda_\pi$ both preserves and
reflects each additional relation. Hence $\lambda_\pi^{-1}$ also preserves and reflects additional relations. As $\lambda_\pi^{-1}$ is a
bijection, it is then evident that the reverse forth and back
conditions are respected.
\end{proof}

\begin{corollary}
The category $\fralg(\sigma)$ is a reflective subcategory of $\ralg(\sigma)$.
\end{corollary}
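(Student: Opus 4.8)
The plan is to reproduce, essentially verbatim, the argument that established reflectivity of $\fralg$ in $\ralg$, now invoking the $\sigma$-enriched analogues supplied by \Cref{thm:expansion}, \Cref{p:completion'}, and \Cref{c:extended-duality}. Reflectivity of a full subcategory amounts to exhibiting a left adjoint to the inclusion $\fralg(\sigma) \hookrightarrow \ralg(\sigma)$, and the natural candidate for this reflector is the composite $G' \circ F'$ from the adjunction $F' \dashv G'$ of \Cref{thm:expansion}.

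First I would recall that, as shown in the proof of \Cref{c:extended-duality}, the unit $\eta$ of the adjunction $F' \dashv G'$ restricts to a natural isomorphism on $\fralg(\sigma)$: for every finitarily compatibly complete $\cA$ in $\ralg(\sigma)$, the component $\eta_\cA$ is an isomorphism of $(\{-,\rest\}\cup\sigma)$-algebras. Equivalently, the restriction of $G' \circ F'$ to $\fralg(\sigma)$ is naturally isomorphic to the identity. Next I would note that $G' \circ F'$ always lands inside $\fralg(\sigma)$: by \Cref{p:completion'}, the map $\eta_\cA \from \cA \hookrightarrow (G' \circ F')(\cA)$ is the finitary compatible completion of $\cA$, so in particular $(G' \circ F')(\cA)$ is finitarily compatibly complete and hence an object of $\fralg(\sigma)$. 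Thus $G' \circ F'$ may legitimately be regarded as a functor $\ralg(\sigma) \to \fralg(\sigma)$.

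Combining these two observations via the standard categorical fact—that a composite $G'F'$ arising from an adjunction, whose image lies in a full subcategory on which the unit is invertible, corestricts to a left adjoint of the inclusion—yields immediately that $G' \circ F' \from \ralg(\sigma) \to \fralg(\sigma)$ is left adjoint to the inclusion $\fralg(\sigma) \to \ralg(\sigma)$. Hence $\fralg(\sigma)$ is a reflective subcategory, with reflector given by finitary compatible completion (\Cref{p:completion'}).

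There is no real obstacle here; the only point meriting a moment's attention is that the naturality of $\eta$ and the invertibility of its components on $\fralg(\sigma)$ must be understood with respect to the \emph{full} signature $\{-,\rest\}\cup\sigma$ rather than merely the $\{-,\rest\}$-reduct. This, however, is already part of what \Cref{c:extended-duality} delivers, since that corollary treats $\eta$ as a natural transformation of the expanded functors $F'$ and $G'$. The proof therefore reduces to citing the primed analogues of the ingredients used in the unparameterised case.
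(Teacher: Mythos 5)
Your proposal is correct and is essentially the paper's intended argument: the paper states this corollary without a separate proof, implicitly transferring the proof of the unparameterised case (where the reflector $G \circ F$ is left adjoint to the inclusion because its restriction to $\fralg$ is naturally isomorphic to the identity and \Cref{p:completion} identifies it with finitary compatible completion), and you have carried out exactly that transfer using the primed ingredients \Cref{thm:expansion}, \Cref{p:completion'}, and \Cref{c:extended-duality}.
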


\begin{corollary}
The category $\lczdet(\sigma)$ is a reflective subcategory of $\hauset(\sigma)$.
\end{corollary}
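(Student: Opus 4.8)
The plan is to mirror the proof of the unadorned statement that $\lczdet$ is reflective in $\hauset$, with the reflector now taken to be $F' \circ G'$ corestricted to $\lczdet(\sigma)$ and the reflection unit taken to be $\lambda$, regarded throughout as a family of $\hauset(\sigma)$-morphisms. Since \Cref{thm:expansion} provides the adjunction $F' \dashv G'$ between $\ralg(\sigma)$ and $\hauset(\sigma)^{\operatorname{op}}$, the composite $F' \circ G'$ is automatically a (covariant) endofunctor of $\hauset(\sigma)$, with $\lambda \from \mathrm{Id}_{\hauset(\sigma)} \implies F' \circ G'$ a natural transformation.

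The first step is to check that $F' \circ G'$ lands in $\lczdet(\sigma)$ on objects. For $\pi \in \hauset(\sigma)$ the underlying Hausdorff \'etale space of $(F' \circ G')(\pi)$ is $(F \circ G)(\pi)$, which lies in $\lczdet$ by the ``image of $F$'' part of the proof of \Cref{t:discrete-duality}; the additional relational structure is of the required kind because $F'$ only ever produces tight spectral relations with the compatibility property (this is exactly \Cref{l:10} and \Cref{l:10.2}, applied to the operators $\Omega_R$, which are compatibility preserving operators by \Cref{l:12}). Hence $(F' \circ G')(\pi)$ is a genuine object of $\lczdet(\sigma)$.

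The second step is to record that $\lambda$ restricted to $\lczdet(\sigma)$ is a natural isomorphism, equivalently that the restriction of $F' \circ G'$ to $\lczdet(\sigma)$ is naturally isomorphic to the identity. This was already verified in the proof of \Cref{c:extended-duality}: for $\pi \in \lczdet(\sigma)$ the map $\lambda_\pi$ is an isomorphism of the underlying \'etale spaces (by \Cref{t:discrete-duality}) and it both preserves and reflects each added relation, so it is an isomorphism in $\hauset(\sigma)$.

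The final step is the standard categorical conclusion: a covariant endofunctor $T = F' \circ G'$ carrying a natural transformation $\lambda \from \mathrm{Id} \implies T$ whose image lies in a full subcategory on which $\lambda$ is a natural isomorphism exhibits that subcategory as reflective, with reflector $T$ and unit $\lambda$; the factorisation of a morphism $f \from \pi \to D$ (with $D \in \lczdet(\sigma)$) is $\lambda_D^{-1} \circ (F' G' f)$, forced by naturality of $\lambda$. I do not anticipate a genuine obstacle here: the only point needing attention is that every construction is carried out in the enriched category $\hauset(\sigma)$ rather than in $\hauset$ (so that $\lambda_\pi$ is an $\hauset(\sigma)$-morphism, by \Cref{l:14}, and $F' G' f$ is again one, by functoriality of $F'$ and $G'$), but these facts are already secured by the proof of \Cref{thm:expansion}.
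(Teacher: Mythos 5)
Your first three steps are sound and are exactly what the paper intends: the paper leaves this corollary unproved, and its proof of the unadorned version consists of the same two observations (the image of $F \circ G$ lies in the subcategory, and $\lambda$ restricted to it is a natural isomorphism). The gap is in your final ``standard categorical conclusion''. As stated---an endofunctor $T$ with a pointing $\lambda \from \mathrm{Id} \implies T$ whose image lies in a full subcategory on which $\lambda$ is an isomorphism---it is false: these hypotheses force the \emph{existence} of the factorisation $\lambda_D^{-1} \circ Tf$ (this much is indeed naturality), but not its \emph{uniqueness}, and uniqueness is the other half of the universal property you need (it is injectivity of the precomposition map $g \mapsto g \circ \lambda_\pi$ from $\lczdet(\sigma)(T\pi, D)$ to $\hauset(\sigma)(\pi, D)$). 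A small counterexample: take a category with objects $A, B, C$, the full subcategory $\mathcal D$ on $\{B, C\}$, an idempotent $t$ on $B$, maps $g_1, g_2, k \from B \to C$ with $g_1 t = g_2 t = kt = k$, and a map $\lambda_A \from A \to B$ with $t\lambda_A = \lambda_A$ and $g_1\lambda_A = g_2\lambda_A = k\lambda_A \eqqcolon f$; setting $TA = TB = B$, $TC = C$, $T\lambda_A = t$, $Tf = k$, and $T = \mathrm{id}$ on $\mathcal D$ satisfies all of your hypotheses, yet $f$ factors through $\lambda_A$ in three distinct ways and $\mathcal D$ is not reflective.

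What rescues the argument is precisely the structure your last step sets aside: $\lambda$ is not an arbitrary pointing but one of the two units of the adjunction $F' \dashv G'$, so the triangle identities \eqref{eq:1} are available. From $F'\eta \circ \lambda_{F'} = \mathrm{Id}_{F'}$ evaluated at $G'\pi$, together with your observation that $\lambda_{F'G'\pi}$ is invertible (because $F'G'\pi \in \lczdet(\sigma)$), one gets $F'(\eta_{G'\pi}) = \lambda_{F'G'\pi}^{-1}$; applying the contravariant $F'$ to the other identity $G'\lambda_\pi \circ \eta_{G'\pi} = \mathrm{id}_{G'\pi}$ then yields $F'G'(\lambda_\pi) = \lambda_{F'G'\pi}$, i.e.\ the pointed endofunctor is well-pointed (equivalently, the adjunction is idempotent). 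With this identity, uniqueness follows: if $g \circ \lambda_\pi = f$ with $D \in \lczdet(\sigma)$, then naturality of $\lambda$ at $g$ gives
\[
g = \lambda_D^{-1} \circ F'G'(g) \circ \lambda_{F'G'\pi} = \lambda_D^{-1} \circ F'G'(g) \circ F'G'(\lambda_\pi) = \lambda_D^{-1} \circ F'G'(f).
\]
With this supplement your proof is complete, and it is then essentially the paper's argument carried over to the signature~$\sigma$.
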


\section{Subtraction algebras}\label{sec:subtraction}

In this section we will see that the adjunction of \Cref{t:adj}
induces a dual adjunction between the category $\subalg$ of
subtraction algebras and $\{-\}$-homomorphisms and the category
$\hausp$ of Hausdorff spaces and continuous and proper partial
functions (\Cref{p:3}). Restricting to fixpoints yields the classical
Stone dualities for generalised Boolean algebras (\Cref{c:1} and
\Cref{c:2}).\footnote{The object part of these dualities is treated
  in~\cite{Stone1937}, while the full categorical duality of
  \Cref{c:2} is established in~\cite{Doctor1964}. See
  also~\cite{Lawson2023} for a detailed up-to-date treatment. Although
  the authors are not aware of the origins of the duality stated in
  \Cref{c:1}, it can be found, e.g., in the preliminary Section~3 of
  \cite{skewwithintersection}.}

We start by observing that the categories
$\subalg$ and $\hausp$ may indeed be seen as full subcategories of
$\ralg$ and $\hauset$, respectively. Indeed, given a subtraction
algebra $(\cA, -)$, it is easy to check that setting $\rest = \cdot$
yields a difference--restriction algebra structure on $\cA$. On the
other hand, $\hausp$ may be identified with the full subcategory of
$\hauset$ whose \'etale maps are an identity function. Note
that when that is the case, conditions \ref{item:Q1} -- \ref{item:Q3}
are trivially satisfied.

\begin{proposition}\label{p:3}
The adjunction of \Cref{t:adj} restricts and co-restricts to an
adjunction between the categories $\subalg$ and
$\hausp^{\operatorname{op}}$.
\end{proposition}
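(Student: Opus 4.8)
The plan is to exploit the general fact that an adjunction restricts to an adjunction between full subcategories whenever each subcategory is closed under the relevant functor. Since the excerpt has already identified $\subalg$ and $\hausp$ with \emph{full} subcategories of $\ralg$ and $\hauset$ (with conditions \ref{item:Q1}--\ref{item:Q3} trivial for identity \'etale maps), it will suffice to verify two closure conditions on objects: that $F$ sends every subtraction algebra to an object of $\hausp^{\operatorname{op}}$, and that $G$ sends every object of $\hausp^{\operatorname{op}}$ to a subtraction algebra. Once these hold, the components $\eta_\cA$ and $\lambda_\pi$ of the unit and counit of \Cref{t:adj} automatically lie in the subcategories (being morphisms of $\ralg$, resp.\ $\hauset$, between objects of the subcategories, hence morphisms there by fullness), and the naturality squares together with the triangle identities \eqref{eq:1} are inherited verbatim. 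So the restricted functors and the restricted $\eta,\lambda$ will constitute the desired adjunction.

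For the closure of $G$, I would take a Hausdorff space $X$ regarded as the \'etale space $\mathrm{id}_X \from X \twoheadrightarrow X$. As $\mathrm{id}_X$ is injective, $\ko{\mathrm{id}_X}$ is just the algebra of \emph{all} compact open subsets of $X$, and the domain-restriction operation of \Cref{l:2} computes to $U \rest_\pi V = \mathrm{id}_X^{-1}(\mathrm{id}_X(U)) \cap V = U \cap V = U \cdot V$. Hence $\rest$ coincides with $\cdot$ on $G(\mathrm{id}_X)$, making it a subtraction algebra; on morphisms nothing new is needed, since $G\varphi = \varphi^{-1}$ is in particular a $\{-\}$-homomorphism. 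This step is routine.

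The real content is the closure of $F$, i.e.\ showing that $\approx_\cA$ becomes trivial once $\rest = \cdot$. Here I would argue directly from \eqref{eq:filter_equivalence}: for $\mu, \nu \in \pf(\cA)$, the relation $\mu \approx_\cA \nu$ means $a \cdot b \in \nu$ for all $a \in \mu$ and $b \in \nu$. Given $\mu \approx_\cA \nu$ and any $a \in \mu$, choosing some $b \in \nu$ yields $a \cdot b \in \nu$; since $a \cdot b \leq a$ and $\nu$ is upward closed, $a \in \nu$, so $\mu \subseteq \nu$, and symmetry gives $\mu = \nu$. Thus $\approx_\cA$ is the identity relation on $\pf(\cA)$ (generalising the observation for Boolean algebras recorded in the remark following the definition of $F$ on objects). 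Identifying $\pf(\cA)/{\approx_\cA}$ with $\pf(\cA)$, the map $\pi_\cA$ is then the identity on the Hausdorff space $\pf(\cA)$, so $F(\cA)$ is an object of $\hausp$.

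The point requiring the most care in the writing will be this last identification---ensuring that ``$\approx_\cA$ trivial'' genuinely places $F(\cA)$ inside the copy of $\hausp$ sitting in $\hauset$---but the computation itself is short, and everything else is the formal restriction argument of the first paragraph. I therefore expect no substantive obstacle beyond the vanishing of $\approx_\cA$ when $\rest = \cdot$.
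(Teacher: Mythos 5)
Your proposal is correct and takes essentially the same approach as the paper: both reduce the claim to the two object-level closure conditions (the fullness/restriction argument that the paper leaves implicit in ``it suffices to show''), prove that $\approx_\cA$ collapses to filter equality for a subtraction algebra by the same upward-closure computation ($a \cdot b \in \nu$ and $a \cdot b \leq a$ give $a \in \nu$, then maximality), and observe that $\rest_{\mathrm{id}_X}$ is set-theoretic intersection so that $G(\mathrm{id}_X)$ is a subtraction algebra. The only cosmetic difference is that you invoke \eqref{eq:filter_equivalence} where the paper cites \eqref{eq:2}, which is if anything the more direct reference.
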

\begin{proof}
It suffices to show the following:
\begin{enumerate}[label = (\roman*)]
\item\label{item:1} if $\cA$ is a subtraction algebra then $F(\cA) \in \hausp$;
\item\label{item:5} if $X$ is a Hausdorff space and $\mathrm{id}_X$ the identity
map on $X$, then $G(\mathrm{id}_X)$ is a subtraction algebra.
\end{enumerate}
To prove \ref{item:1}, it suffices to show that for a subtraction
algebra $\cA$, the relation $\approx_{\pf\cA}$ is given by filter
equality.  Let $\mu, \nu \in \pf(\cA)$ be
$\approx_{\pf(\cA)}$-equivalent and fix an element $b \in \nu$.
Then for all $a \in \mu$, by~\eqref{eq:2}, we have that $a \rest b
= a \cdot b$ belongs to $\nu$. Since $\nu$ is upward closed, we also
have $a \in \nu$, which shows that $\mu \subseteq \nu$. By
maximality, it then follows that $\mu = \nu$, as required.

For \ref{item:5}, we just need to observe that $\ko{\mathrm{id}_X}$
consists of all compact open subsets of~$X$ and, by~\eqref{eq:dual_operations},
the operation $\rest_{\mathrm{id}_X}$ is given by set-theoretical
intersection, that is, coincides with the operation
$\cdot_{\mathrm{id}_X}$ determined by $-_{{\mathrm{id}_X}}$. Thus
$G({\mathrm{id}_X})$ is a subtraction algebra.
\end{proof}

Now, by \Cref{t:discrete-duality} and \Cref{r:3}, we know that the
adjunction of \Cref{p:3} induces a duality between the full
subcategories of $\subalg$ and of $\hausp$ determined, respectively,
by those subtraction algebras having all finite compatible joins and
by the generalised Stone spaces. Thus, in order to obtain the claimed
dualities, we shall characterise the former.

Recall that a \defn{generalised Boolean algebra} is a distributive
lattice with a bottom element, $\bot$, and a relative complement
operation, $-$, validating $a \wedge (b - a) = \bot$ and $a \vee (b -
a) = a \vee b$.
\begin{lemma}
Let $\cA$ be a subtraction algebra. Then $\cA$ has all finite
compatible joins if and only if $\cA$ admits a generalised Boolean
algebra structure whose partial order relation and relative
complement operation are those inherited from $\cA$.
\end{lemma}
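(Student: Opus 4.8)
The plan is to reduce the statement to a comparison of binary joins. The crucial first observation is that, by commutativity~\ref{commutative}, any two elements $a_1, a_2$ of a subtraction algebra satisfy $a_1 \cdot a_2 = a_2 \cdot a_1$; since $\rest = \cdot$ when we view $\cA$ as a difference--restriction algebra, \Cref{algebra_compatibility} makes \emph{every} pair of elements compatible. Consequently, by \Cref{l:4} and the remark following it, $\cA$ has all finite compatible joins if and only if it has all binary joins. I would also record at the outset that $\cA$ has a least element: each downset $a^\downarrow$ is a Boolean algebra (see \cite{schein1992difference}), hence has a bottom $0_a = a - a$, and since $0_a \le a \cdot b \le b$ for every $b$, this $0_a$ is a global least element $0$, independent of $a$. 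The lemma therefore becomes: $\cA$ has all binary joins if and only if $(\cA, \le, -)$ underlies a generalised Boolean algebra.

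For the backward implication, suppose $\cA$ carries a generalised Boolean algebra structure with the inherited order and relative complement. A generalised Boolean algebra is a distributive lattice with bottom, so it has a binary join operation; since its order agrees with $\le$, and the join is determined by the order, this join is precisely the join in $(\cA, \le)$. Hence $\cA$ has all binary joins, and we are done by the reduction above.

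For the forward implication, assume $\cA$ has all binary joins and write $\vee$ for the resulting join. Then $(\cA, \cdot, \vee, 0)$ is a lattice with bottom whose order is $\le$ and whose meet is $\cdot$ (both determined by the order), so it remains only to verify distributivity and the two relative-complement laws $a \cdot (b - a) = 0$ and $a \vee (b - a) = a \vee b$. The single tool I would use is that these operations restrict correctly to principal downsets: for $x, y \le c$ the global join $x \vee y$ is again $\le c$, so it coincides with the join computed inside the Boolean algebra $c^\downarrow$, while $\cdot$ and $-$ restricted to $c^\downarrow$ are by definition the meet and relative complement of that Boolean algebra. I would then verify each axiom by pushing the relevant instance into an appropriate downset: distributivity $a \cdot (b \vee c) = (a \cdot b) \vee (a \cdot c)$ is checked inside $(a \vee b \vee c)^\downarrow$, a distributive Boolean algebra in which all operands and both sides live; the complement laws are checked inside $(a \vee b)^\downarrow$ (note $b - a \le b \le a \vee b$), where they are the standard Boolean identities $a \wedge (b \wedge \lnot a) = 0$ and $a \vee (b \wedge \lnot a) = a \vee b$.

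The step I expect to be the main obstacle is the bookkeeping that justifies ``pushing into a downset'': concretely, that the abstractly defined subtraction operation $-$ agrees on $c^\downarrow$ with the relative complement of the Boolean algebra $c^\downarrow$, and that the global join agrees there with the Boolean join of $c^\downarrow$. The join agreement follows from the least-upper-bound characterisation, as indicated above. The complement agreement is exactly the content of the known Boolean-algebra structure on the downsets of a subtraction algebra (\cite{schein1992difference}, and compare \Cref{p:1}\ref{item:4}); once both coincidences are in hand, every generalised Boolean algebra axiom is an instance of a Boolean-algebra identity.
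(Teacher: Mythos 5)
Your proposal is correct and follows essentially the same route as the paper: reduce to binary joins via the observation that every pair in a subtraction algebra is compatible, then verify distributivity and the relative-complement laws by pushing them into principal downsets $(a \vee b \vee c)^\downarrow$ and $(a \vee b)^\downarrow$, using Schein's result that these downsets are Boolean algebras with $-$ acting as (relative) complement. The only differences are cosmetic: you spell out the trivial backward direction and the existence of the bottom element $0$, which the paper leaves implicit, and you check $a \cdot (b-a) = 0$ inside a downset where the paper notes it holds trivially in any subtraction algebra.
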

\begin{proof}
Suppose that $\cA$ is a subtraction algebra having all finite
compatible joins. Since, in a subtraction algebra, every two
elements are compatible, $\cA$ has all binary joins, which shows
that $\cA$ is a lattice. It remains to show that for all $a,
b, c \in \cA$, the following equalities hold:
\begin{enumerate}[label = (\roman*)]
\item\label{item:6} $a \vee (b \cdot c) = (a \vee b) \cdot (a \vee
c)$ (that is, $\cA$ is distributive),
\item \label{item:7} $a \cdot (b-a) = 0$,
\item \label{item:8} $a \vee (b-a) = a \vee b$.
\end{enumerate}
Let $x = a \vee b \vee c$. By \cite[page
2154]{schein1992difference}, $x^{\downarrow}$ is a Boolean algebra
for the partial order inherited from $\cA$, with top element $x$,
bottom element $0$ and complement given by $\overline{z} = x-z$, for
all $z \in x^\downarrow$. In particular, $x^\downarrow$ is a
distributive lattice, which shows that~\ref{item:6}
holds. Equality~\ref{item:7} is trivially satisfied in every
subtraction algebra. Finally, to prove~\ref{item:8}, observing that
$b-a = b \cdot (x-a)$, we may compute
\[a \vee (b -a) = a \vee (b \cdot (x-a))
\just{\text{\ref{item:6}}}= (a \vee b) \cdot (a \vee (x-a)) =
(a\vee b) \cdot (a \vee \overline{a}) = a \vee b. \popQED\]
\end{proof}

Let $\ba^+$ denote the category of generalised Boolean algebras and
$\{-\}$-homomorphisms and $\stone^+$ denote the category of
generalised Stone spaces and continuous and proper partial functions.
Note that a map between generalised Boolean algebras is a
$\{-\}$-homo\-morphism if and only if it is a lattice
homomorphism. Indeed, if  $h\colon \cA_1 \to \cA_2$ is a map between
generalised Boolean algebras that preserves $\wedge$ and
$\vee$, then since $a - b$ is the only element $c$ satisfying $b\wedge c =
\bot$ and $b\vee c = a\vee b$, it follows that $h$ preserves $-$. The
converse is a consequence of \Cref{l:6}.

As an immediate consequence of the above results, we obtain the following.
\begin{corollary}\label{c:1}
There is a duality between $\ba^+$ and $\stone^+$.
\end{corollary}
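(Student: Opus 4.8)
The plan is to read off \Cref{c:1} as the fixpoint restriction of the adjunction supplied by \Cref{p:3}. First I would recall that, by \Cref{t:discrete-duality} together with \Cref{r:3}, the ambient adjunction $F \dashv G$ of \Cref{t:adj} restricts to a duality precisely between its fixpoint subcategories, namely $\fralg$ on the algebra side and $\lczdet$ on the space side. Since \Cref{p:3} asserts that this adjunction restricts and co-restricts along the full embeddings $\subalg \hookrightarrow \ralg$ and $\hausp \hookrightarrow \hauset$, and the unit and counit of the restricted adjunction are simply the restrictions of $\eta$ and $\lambda$, an object of $\subalg$ (respectively $\hausp$) is a fixpoint of the restricted adjunction if and only if it is a fixpoint of the ambient one. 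Hence the restricted adjunction induces a duality between $\subalg \cap \fralg$ and $\hausp \cap \lczdet$.

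Next I would identify these two intersection categories with $\ba^+$ and $\stone^+$ respectively. On the space side, $\hausp \cap \lczdet$ consists of those objects of $\hauset$ whose \'etale map is an identity function (the Hausdorff spaces, per the identification made at the start of \Cref{sec:subtraction}) and which are in addition locally compact and zero-dimensional; these are exactly the generalised Stone spaces. The morphisms are inherited from $\hauset$, but as observed there, conditions \ref{item:Q1}--\ref{item:Q3} are automatic when the \'etale maps are identities, so a morphism is nothing more than a continuous and proper partial function. Thus $\hausp \cap \lczdet$ is exactly $\stone^+$. On the algebra side, $\subalg \cap \fralg$ consists of the finitarily compatibly complete subtraction algebras, which by the preceding lemma are precisely the $\{-\}$-reducts of generalised Boolean algebras, the generalised Boolean structure being uniquely recovered from the inherited order and relative complement. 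The morphisms are the $\{-\}$-homomorphisms, which are exactly those of $\ba^+$. Hence $\subalg \cap \fralg$ is exactly $\ba^+$.

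The only substantive content beyond this bookkeeping is the preceding lemma, which I would treat as already established, so I do not expect a genuine obstacle. The point requiring a little care is that the two object identifications be \emph{functorial}: equipping a finitarily compatibly complete subtraction algebra with its unique generalised Boolean structure, and dually regarding a generalised Stone space as a Hausdorff \'etale space over itself, must assemble into isomorphisms of categories $\subalg \cap \fralg \cong \ba^+$ and $\hausp \cap \lczdet \cong \stone^+$. Since in each case the identification is the identity on morphisms---the morphism classes literally coincide---this is immediate, and transporting the fixpoint duality of the first paragraph across these isomorphisms yields the desired duality between $\ba^+$ and $\stone^+$.
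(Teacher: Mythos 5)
Your proposal is correct and takes essentially the same route as the paper: the paper likewise combines \Cref{p:3} with the fixpoint duality supplied by \Cref{t:discrete-duality} and \Cref{r:3}, then invokes the lemma identifying the subtraction algebras with all finite compatible joins as exactly the generalised Boolean algebras (with inherited order and relative complement), together with the observation that the relevant morphism classes coincide. The only cosmetic difference is that you spell out explicitly why fixpoints of the restricted adjunction agree with fixpoints of the ambient one, a point the paper leaves implicit.
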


Finally, it is natural to ask what is the duality obtained if, instead
of $\stone^+$, we considered its wide subcategory determined by the
continuous and proper (total) functions. The answer is given by the
following result, containing the most well-known version of Stone
duality for generalised Boolean algebras.  We recall
that a homomorphism $h: \cA \to \cB$ between generalised Boolean
algebras is called \defn{proper} if $\cB = \bigcup_{a \in \cA}
h(a)^\downarrow$.

\begin{corollary}\label{c:2}
There is a duality between the category of generalised Boolean algebras with proper
homomorphisms and the category of generalised Stone spaces with continuous
and proper functions.
\end{corollary}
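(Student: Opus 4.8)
The plan is to \emph{restrict} the duality between $\ba^+$ and $\stone^+$ of \Cref{c:1} to wide subcategories, keeping the same objects but passing to the total continuous proper functions on the topological side and to the proper homomorphisms on the algebraic side. Since \Cref{c:1} already supplies a duality, the objects match up and the accompanying natural isomorphisms $\eta$ and $\lambda$ are available; I would first record that they lie in the restricted categories, because each $\eta_\cA$ is an isomorphism (hence surjective, hence a proper homomorphism) and each $\lambda$ is a total homeomorphism (hence total). Thus everything reduces to showing that the hom\nobreakdash-set bijection coming from \Cref{c:1} carries proper homomorphisms exactly onto total functions, and this in turn reduces to a single equivalence, isolated below. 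I would also check the (routine) fact that total continuous proper functions and proper homomorphisms are each closed under composition and contain the identities, so that the two restricted collections are genuine wide subcategories.

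\emph{Key claim: for a morphism $\varphi \from X \parrow Y$ of $\stone^+$, the dual homomorphism $G\varphi \from \ko{Y} \to \ko{X}$, $U \mapsto \varphi^{-1}(U)$, is proper if and only if $\varphi$ is total.} For the forward direction, assume $G\varphi$ is proper and fix $x \in X$. As $X$ is a generalised Stone space it has a basis of compact open sets, so we may choose a compact open $W \ni x$; properness then yields a compact open $U \subseteq Y$ with $W \subseteq \varphi^{-1}(U)$, whence $x \in \dom(\varphi)$ and $\varphi$ is total. For the converse, assume $\varphi$ is total and let $W \in \ko{X}$. Then $\varphi|_W$ is a continuous map on the compact set $W$, so $\varphi[W]$ is compact in $Y$; covering $\varphi[W]$ by basic compact open sets of $Y$ and extracting a finite subcover produces a compact open $U \supseteq \varphi[W]$, and then $W \subseteq \varphi^{-1}(U) = G\varphi(U)$, which witnesses properness of $G\varphi$.

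With the claim in hand, I would conclude as follows. Because \Cref{c:1} is an equivalence, the contravariant functor $G$ is full and faithful, so via the natural isomorphism $\eta$ every homomorphism $h \from \cA \to \cB$ of generalised Boolean algebras is identified with a morphism $G\varphi$ (indeed $\varphi = Fh$, up to $\eta$). Hence the claim translates into the statement that $h$ is proper if and only if its dual partial function $Fh$ is total, which is precisely the correspondence of morphism classes needed for the restricted duality. The one step demanding care---and the place I would be most careful---is this bookkeeping around $\eta$: I must verify that ``proper'' transfers correctly between $h$ and $G\varphi = \eta_\cB \circ h \circ \eta_\cA^{-1}$. This follows once I note that properness is invariant under pre- and post-composition with isomorphisms, since a surjective homomorphism is automatically proper and an order-isomorphism carries the defining union $\cB = \bigcup_{a} h(a)^\downarrow$ to the corresponding union. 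Given this, the restricted functors land in the restricted categories and induce mutually inverse bijections on the restricted hom-sets, so the duality of \Cref{c:1} cuts down to the asserted duality between generalised Boolean algebras with proper homomorphisms and generalised Stone spaces with continuous proper total functions.
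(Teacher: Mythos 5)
Your proof is correct, but it decomposes the work differently from the paper. Half of your argument coincides with the paper's: the converse direction of your key claim ($\varphi$ total $\implies$ $G\varphi$ proper) is exactly the paper's compactness argument (cover the image of a compact open by basic compact opens of the codomain and extract a finite subcover). For the other half, however, the paper never argues on the space side: it shows directly that $F$ carries proper homomorphisms to total maps, by a short maximal-filter computation --- given $\mu \in \pf(\cB)$, pick $b \in \mu$; properness yields $a \in \cA$ with $b \leq h(a)$, so $h(a) \in \mu$ by upward closure, hence $\mu \in \widehat{h(a)} \subseteq \dom(Fh)$. You instead prove the forward direction of your key claim topologically ($G\varphi$ proper $\implies$ $\varphi$ total, using the basis of compact opens) and then transfer the conclusion to the algebraic side via naturality of $\eta$ (so that $GFh = \eta_\cB \circ h \circ \eta_\cA^{-1}$) together with invariance of properness under pre- and post-composition with isomorphisms. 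Both routes are sound. The paper's is shorter and more self-contained, requiring no bookkeeping around the unit; yours keeps all the topology in a single biconditional lemma and correctly isolates the one delicate transfer step, but pays for this with the extra categorical bookkeeping --- which you do handle properly (a surjective homomorphism is automatically proper, and the defining union $\cB = \bigcup_{a}h(a)^{\downarrow}$ is carried along by order-isomorphisms).
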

\begin{proof}
Given a proper homomorphism $h: \cA \to \cB$, let us first show that
$Fh$ is a total map. Given $\mu \in \pf(\cB)$, let $b \in
\mu$. Since $h$ is proper, there exists some $a \in \cA$ satisfying
$b \leq h(a)$ and thus, $h(a) \in \mu$, that is, $\mu \in
\widehat{h(a)}$. In particular, $\mu \in \dom(Fh)$, as required.

Conversely, suppose that $\varphi: X \to Y$ is a proper continuous
function between generalised Stone spaces and let $U \subseteq X$ be
a compact open subset. For proving that $G\varphi$ is proper, we
need to show the existence of a compact open subset $V \subseteq Y$
satisfying $U \subseteq \varphi^{-1}(V)$ or, equivalently,
$\varphi(U) \subseteq V$. Since $\varphi$ is a total function, for
each $x \in X$, the value $\varphi(x)$ is defined, and since $Y$ is a
generalised Stone space, there exists a compact open subset $V_x
\subseteq Y$ such that $\varphi(x) \in V_x$. Clearly, we have
$\varphi(U) \subseteq \bigcup_{x \in X} V_x$. Now since continuous
images of compact subsets are compact, there exist $x_1, \dots, x_n
\in X$ such that $\varphi(U) \subseteq \bigcup_{i = 1}^n
V_{x_i}$. Thus taking $V = \bigcup_{i = 1}^n V_{x_i}$ yields the
required compact open subset of $Y$.
\end{proof}

\section*{Funding}
C\'elia Borlido was partially supported by the Centre for Mathematics
of the University of Coimbra - UIDB/00324/2020, funded by the
Portuguese Government through FCT/MCTES.
Ganna Kudryavsteva was supported by the ARIS grants P1-0288 and J1-60025. Brett McLean was supported by the FWO Senior Postdoctoral Fellowship 1280024N.

\bibliographystyle{amsplain}

\bibliography{brettbib}


\end{document}